\documentclass[a4paper, 11pt]{preprint}
\usepackage{amsthm, amsmath, amssymb, mathtools}
\usepackage[cal=boondoxo]{mathalfa}
\usepackage[full]{textcomp}
\usepackage[osf]{newtxtext}
\usepackage{mhequ}
\usepackage{mathrsfs}
\usepackage{microtype}
\usepackage{tikz}
\usepackage{enumitem}
\usepackage{comment}
\usepackage{orcidlink}
\usepackage{cprotect}
\usepackage{wasysym}
\usepackage{centernot}

\usepackage{hyperref}

\DeclareSymbolFont{timesoperators}{T1}{ptm}{m}{n}
\SetSymbolFont{timesoperators}{bold}{T1}{ptm}{b}{n}

\def\emptyset{\mathord{\centernot{\text{\rm$\Circle$}}}}

\usetikzlibrary{calc}
\usetikzlibrary{shapes}
\usetikzlibrary{decorations}
\usetikzlibrary{decorations.markings}
\usetikzlibrary{decorations.pathmorphing}

\newcommand{\eqdef}{\stackrel{\mbox{\rm\tiny def}}{=}}

\makeatletter
\renewcommand{\operator@font}{\mathgroup\symtimesoperators}
\makeatother

\usetikzlibrary{external}

\colorlet{darkred}{red!90!black}
\colorlet{darkblue}{blue!90!black}
\colorlet{lightblue}{blue!50}

%define custom theorem environments
\newtheorem{theorem}{\bf  {Theorem}}
\newtheorem{proposition}[theorem]{\bf {Proposition}}
\newtheorem{lemma}[theorem]{\bf Lemma}

\newtheorem{corollary}[theorem]{\bf Corollary}
\newtheorem{remark}[theorem]{\bf Remark}

\numberwithin{theorem}{section}

\overfullrule=3mm

\setlength{\marginparsep}{2mm}
\setlength{\marginparwidth}{3.9cm}

\DeclarePairedDelimiter\abs\lvert\rvert

\colorlet{symbols}{black}
\colorlet{testcolor}{green!60!black}

\tikzset{
	root/.style={circle,fill=testcolor,inner sep=0pt, minimum size=2mm},
	broot/.style={circle,fill=gray,inner sep=0pt, minimum size=2mm},
	dot/.style={circle,fill=black,inner sep=0pt, minimum size=1mm},
		reddot/.style={circle,fill=red,inner sep=0pt, minimum size=1mm},
			bluedot/.style={circle,fill=blue,inner sep=0pt, minimum size=1mm},
	eps/.style={circle,fill=white,draw=symbols,inner sep=0pt,minimum size=0.8mm},
	int/.style={circle,fill=black,draw=black,inner sep=0pt,minimum size=0.7mm},
	var/.style={circle,fill=black!10,draw=black,inner sep=0pt, minimum size=2mm},
	dotred/.style={circle,fill=black!50,inner sep=0pt, minimum size=2mm},
	generic/.style={semithick,shorten >=1pt,shorten <=1pt},
	dist/.style={ultra thick,draw=testcolor,shorten >=1pt,shorten <=1pt},
	testfcn/.style={ultra thick,testcolor,shorten >=1pt,shorten <=1pt,->},
	testfcnx/.style={ultra thick,testcolor,shorten >=1pt,shorten <=1pt,<-,
		postaction={decorate,decoration={markings,mark=at position 0.6 with {\drawx}}}},
	keps/.style={semithick,shorten >=1pt,shorten <=1pt,densely dashed,->},
	kprimex/.style={semithick,shorten >=1pt,shorten <=1pt,densely dashed,->,
		postaction={decorate,decoration={markings,mark=at position 0.4 with {\drawx}}}},
	kernel/.style={semithick,shorten >=1pt,shorten <=1pt,->},
	multx/.style={shorten >=1pt,shorten <=1pt,
		postaction={decorate,decoration={markings,mark=at position 0.5 with {\drawx}}}},
	kernelBig/.style={semithick,shorten >=1pt,shorten <=1pt,decorate, decoration={zigzag,amplitude=1.5pt,segment length = 3pt,pre length=2pt,post length=2pt}},
	kernelBig2/.style={semithick,shorten >=1pt,shorten <=1pt,decorate,->, decoration={snake,amplitude=1.5pt,segment length = 3pt,pre length=2pt,post length=5pt}},
	rho/.style={dotted,semithick,shorten >=1pt,shorten <=1pt},
	renorm/.style={shape=circle,fill=white,inner sep=1pt},
	labl/.style={shape=rectangle,fill=white,inner sep=1pt},
	H/.style={circle,fill=blue!10,draw=symbols,inner sep=0pt,minimum size=1.3mm},
	xi/.style={regular polygon sides=4,fill=red!30,draw=symbols,inner sep=0pt,minimum size=1.1mm},
	xix/.style={crosscircle,fill=symbols!10,draw=symbols,inner sep=0pt,minimum size=1.2mm},
	xib/.style={circle,fill=symbols!10,draw=symbols,inner sep=0pt,minimum size=1.6mm},
	xibx/.style={crosscircle,fill=symbols!10,draw=symbols,inner sep=0pt,minimum size=1.6mm},
	not/.style={circle,fill=symbols,draw=symbols,inner sep=0pt,minimum size=0.5mm},
Wick/.style={rectangle, draw=blue!80,rounded corners=3pt,fill=blue!5},
	>=stealth,
	not/.style={circle,fill=symbols,draw=symbols,inner sep=0pt,minimum size=0.5mm},
%	eps/.style={circle,fill=black,draw=black, solid,inner sep=0pt,minimum size=0.5mm},
kernels2/.style={very thick,segment length=12pt},
	}
\makeatletter
\def\DeclareSymbol#1#2#3{%
	\expandafter\gdef\csname MH@symb@#1\endcsname{\tikzsetnextfilename{symbol#1}%
		\tikz[baseline=#2,scale=0.15,draw=symbols,line join=round]{#3}}%
	\expandafter\gdef\csname MH@symb@#1s\endcsname{\scalebox{0.75}{\tikzsetnextfilename{symbol#1}%
			\tikz[baseline=#2,scale=0.15,draw=symbols,line join=round]{#3}}}%
	\expandafter\gdef\csname MH@symb@#1ss\endcsname{\scalebox{0.65}{\tikzsetnextfilename{symbol#1}%
			\tikz[baseline=#2,scale=0.15,draw=symbols,line join=round]{#3}}}%
}
\def\<#1>{\ifthenelse{\boolean{mmode}}{\mathchoice{\csname MH@symb@#1\endcsname}{\csname MH@symb@#1\endcsname}{\csname MH@symb@#1s\endcsname}{\csname MH@symb@#1ss\endcsname}}{\csname MH@symb@#1\endcsname}}
\makeatother

\DeclareSymbol{N1}{-1}{\draw (0,0) -- (-0.8,1.2);\draw (0,0) node[dot]{}  -- (0,0);}
\DeclareSymbol{N2}{-1}{ \draw (0,0) -- (0.8,1.2);\draw (0,0) -- (-0.8,1.2);\draw (0,0) node[dot]{}  -- (0,0);}
\DeclareSymbol{N3}{-3}{\draw (0,0) node[xi]{}  -- (0,0);}
\DeclareSymbol{N4}{-3}{\draw (0,0) node[H]{}  -- (0,0);}
\DeclareSymbol{one}{-3}{\draw (0,0) node[dot]{}  -- (0,0);}
\DeclareSymbol{N5}{-1}{\draw (-0.8,1.2) -- (0,0) node[H]{}  -- (0,0);}

\DeclareSymbol{ex}{3}{\draw (0,3) node[xi]{}  -- (-1,1.5) node[H]{}
	-- (0,0) node[dot]{} -- (1,1.5) node[H]{};}

\DeclareSymbol{xi}{-2}{\draw (0,0) node[xi,minimum size=1.6mm]{};}
\DeclareSymbol{H}{-2}{\draw (0,0) node[H,minimum size=1.8mm]{};}

%bold characters

%mathfrak characters

\def\K{\mathfrak{K}}

%characters with hats

%mathcal characters

\def\CA{\mathcal{A}}
\def\CB{\mathcal{B}}
\def\CC{\mathcal{C}}
\def\CT{\mathcal{T}}
\def\CG{\mathcal{G}}
\def\CD{\mathcal{D}}

\def\CI{\mathcal{I}}
\def\CK{\mathcal{K}}

\def\CR{\mathcal{R}}

\def\CJ{\mathcal{J}}
\def\CM{\mathcal{M}}
\def\CS{\mathcal{S}}

\def\CP{\mathcal{P}}

\def\cC{\mathscr{C}}

\def\R{\mathbf{R}}
\let\phi\varphi
\def\scal#1{\langle #1\rangle}
\def\${|\!|\!|}

\def\Vec{\operatorname{span}}
\let\eps\varepsilon

\def\PPi{\boldsymbol{\Pi}}

\def\one{\mathbf{1}}

\let\div\undefined
\DeclareMathOperator{\div}{div}
\DeclareMathOperator{\sign}{sign}
\def\sol{{\mathrm{sol}}}

\def\E{\mathbf{E}}
\def\X{\mathbb{X}}

\def\B{\mathbb{B}}

\def\N{\mathbf{N}}
\def\Z{\mathbf{Z}}
\def\f#1#2{\textstyle{\frac{#1}{#2}}}
\let\d\partial

\let\up\uparrow
\let\down\downarrow

\def\restr{\mathbin{\upharpoonright}}

\def\dash{\leavevmode\unskip\kern0.18em--\penalty\exhyphenpenalty\kern0.18em}
\def\slash{\leavevmode\unskip\kern0.15em/\penalty\exhyphenpenalty\kern0.15em}

\makeatletter

\DeclareRobustCommand{\TitleEquation}[2]{\texorpdfstring{\StrLeft{\f@series}{1}[\@firstchar]$\if%
		b\@firstchar\boldsymbol{#1}\else#1\fi$}{#2}}

\makeatother

\begin{document}	
	\title{Renormalisation in the presence of variance blowup}
	\author{Martin Hairer$^{1,2}$ \orcidlink{0000-0002-2141-6561}}
	\institute{École Polytechnique Fédérale de Lausanne, 1015 Lausanne, Switzerland 
	\and Imperial College London, London SW7 2AZ, United Kingdom\\
		\email{martin.hairer@epfl.ch}}
	
	\maketitle
	
	\begin{abstract}
	We show that if one drives the KPZ equation by the derivative
	of a space-time white noise smoothened out at scale $\eps \ll 1$ and multiplied by
	$\eps^{3/4}$ then,
	as $\eps \to 0$, solutions converge to the Cole--Hopf solutions to the 
	KPZ equation driven by space-time white noise.
	
	In the same vein, we also show that if one drives an SDE by fractional Brownian motion with
	Hurst parameter $H < 1/4$, smoothened out at scale $\eps \ll 1$ and multiplied by
	$\eps^{1/4-H}$ then, as $\eps \to 0$, solutions converge to an SDE 
	driven by white noise. 
	The mechanism giving rise to both results is the same, but the proof techniques
	differ substantially. 

\vspace{1em}

\noindent{\it MSC2020:} 60H10, 60H15, 60L20, 60L30

\noindent {\it Keywords:} KPZ equation, fractional Brownian motion
	\end{abstract}
	
	\setcounter{tocdepth}{2}
	\tableofcontents
	
\section{Introduction}

The past two decades have seen a wealth of results in the general area of 
singular stochastic ODEs / parabolic PDEs with the advent of rough paths \cite{Terry,Max,Book},
regularity structures \cite{Hai14}, paracontrolled calculus \cite{GIP15},
the rigorous implementation of Wilson's RG and the Polchinski's flow equation
\cite{Antti,Pawel}, etc. A common feature of all of these works (as well as their further
refinements) is that their domain of validity is cleanly delimited by two main
conditions. The first (and physically most relevant) condition is one of
\textit{scaling subcriticality} or, in QFT language, \textit{superrenormalisability}. 
This condition, roughly speaking, guarantees that solutions to the equation of
interest locally look like Gaussian processes at small scales. When this condition
fails to be satisfied, i.e.\ in the supercritical case, one typically expects 
any natural approximation to the problem
to converge to  a ``trivial'' (often Gaussian) limit as its approximation scale $\eps$
is sent to zero. See for example \cite{Fröhlich,Aizenman}
in the context of  the 
$\Phi^4$ models as well as \cite{KPZ3,MR3851818,MR4087492} in the context of the KPZ
equation. At the boundary, i.e.\ when the model is \textit{scaling critical}, 
interesting phenomena can occur but depend on much finer properties of the model.
For example the $\Phi^4$ model turns out to be trivial at its critical dimension $4$
\cite{ADC21} while the KPZ equation appears to exhibit non-trivial behaviour 
\cite{KPZ2D1,KPZ2D2,KPZ2D3}.

There is however a second somewhat less widely known condition that appears in the
above mentioned works as a consequence of the following. These works all exploit the fact
that the solution can locally be described by a linear combination of certain
multilinear expressions in the driving noise(s). When interpreted ``naïvely'' some
of these expressions blow up as $\eps \to 0$ and need to be renormalised in order to
yield finite limits. This renormalisation procedure can be thought of as a 
way of recentering these expressions in a nonlinear way that is compatible with their
algebraic structure. The recentered expressions have vanishing expectations and it is
a generic fact that their variances are better behaved and typically converge to
a finite limit. In certain cases however (and this only happens when considering equations
driven by a noise that is less regular than the corresponding space-time white noise)
it may happen that, despite being scaling subcritical, the problem is such that one
of these variances also diverges as $\eps \to 0$. Maybe the most prominent example of
this phenomenon is given by SDEs driven by independent fractional Brownian motions $B_i$
with Hurst parameter 
$H < \f14$. In this case, the Lévy area of natural approximations to these driving 
processes diverges even though, at the analytical level, the theory of rough paths
would be applicable. It follows from general principles that it is possible to
construct a rough path above $B$ \cite{Extension,Samy,Unterberger}, but 
these constructions are non-canonical (not even modulo finitely many parameters)
and have pathological properties. In particular, the resulting solution flow
$\Phi_{s,t}$ is not measurable with respect to the $\sigma$-algebra generated by
the increments of $B$ over $[s,t]$.

In this article, we study two examples exploring the situation where 
subcriticality holds but the variance of one of the stochastic objects associated
to our problem diverges. These are given by the KPZ equation driven by (approximations to)
the spatial derivative of space-time white noise and SDEs driven by fractional Brownian
motion with $H \le \f14$. We will see that in both cases the limit is given by
an equation in the same class but driven by white noise. See the discussion at the
end of this introduction for more details regarding what one would expect in general.

\subsection{The KPZ equation}
	
It is by now well known that the KPZ equation can be derived from a large class of 
interface fluctuation models in the weakly asymmetric regime. The first mathematically
rigorous proof of this fact dates back to the seminal work by Bertini and Giacomin \cite{BG99}
on fluctuations of the weakly asymmetric simple exclusion process which exploited a number of
very specific features of the latter, in particular that it behaves ``nicely'' under
the Cole--Hopf transform which is used there to interpret the KPZ equation itself.

One of the simplest convergence results \cite{Hai13,Hai14,HaoCLT} states that if $u$ solves the PDE
\begin{equ}[e:weakAsym]
\d_t u = \d_x^2 u + \sqrt \eps (\d_x u)^2 + \bar \eta\;,
\end{equ}
for $\bar \eta$ a stationary random field with good enough mixing properties, then there exists a constant $C_\eps$ such that
$h_\eps(t,x) = \sqrt \eps u(t/\eps^2,x/\eps) - C_\eps t$ converges as $\eps \to 0$ to the KPZ equation. 
As one would expect, the variance of the resulting space-time white noise driving the KPZ equation is
given by the space-time integral of the variance of $\bar \eta$. Furthermore, the leading order of the constant 
$C_\eps$ is of the form $c\eps^{-1}$.

This leads naturally (at least from a mathematical perspective) to the question of what happens when the 
variance of $\bar \eta$ has vanishing integral. For example, does \eqref{e:weakAsym} admit a non-trivial scaling limit
when $\bar \eta$ is replaced by $\d_x \bar \eta$? Under the same scaling and recentering as above, 
we know that solutions simply converge
to the solution of the deterministic equation $\d_t h = \d_x^2 h + (\d_x h)^2$ but is there a scaling 
under which they converge to a non-trivial stochastic process? It turns out that this is indeed the case provided that we
increase the strength of the nonlinearity and consider a slightly different scaling. 

One aim of this article is to show that, if we consider solutions to the equation
\begin{equ}[e:strongAsym]
\d_t u = \d_x^2 u +  \eps^{1/4} (\d_x u)^2 + \d_x\bar \eta\;,
\end{equ}
and set $h_\eps(t,x) = \eps^{1/4} u(t/\eps^2,x/\eps) - C_\eps t$ (note the difference in scaling from
the one that keeps the stochastic heat equation invariant!), then a suitable choice of $C_\eps$ leads
again to the convergence of $h_\eps$ to solutions to the KPZ equation. The constant itself also
behaves slightly differently: to leading order it now scales like $C_\eps \sim \eps^{-3/2}$.

After rescaling, \eqref{e:strongAsym} can be written as
\begin{equ}[e:basic]
\d_t h_\eps = \d_x^2 h_\eps +  (\d_x h_\eps)^2 + \eps^{3/4} \d_x\eta_\eps - C_\eps\;,
\end{equ}
where $\eta_\eps$ is scaled so that it converges to a white noise.
In order to simplify notations and to avoid technical complications we fix a space-time white noise $\eta$ on
$S^1 \times \R$ and we assume that there exists a smooth function $\rho \colon \R^2 \to \R$ supported in the 
unit ball such that $\eta_\eps = \rho_\eps \star \eta$ where $\rho_\eps(z) = \eps^{-3}\rho(\eps^{-1}z)$ and
$\star$ denotes space-time convolution. We then consider \eqref{e:basic} as a Cauchy problem in 
$\CC^\alpha(S^1)$ for some $\alpha \in (0,1/2)$, in particular we restrict ourselves to a compact spatial domain
with periodic boundary conditions. With these notations at hand, we have the following result,
the proof of which will be provided in Section~\ref{sec:convergenceKPZ}.

\begin{theorem}\label{theo:main:KPZ}
Let $\alpha \in (0,1/2)$, $\beta \in (0,\alpha\wedge 1/4)$, and let $h_0 \in \CC^\alpha(S^1)$. 
Then, there exists $\sigma > 0$ and a choice of constants $C_\eps$ such that the solution $h_\eps$ to \eqref{e:basic}
converges in law in $\CC(\R_+, \CC^\beta(S^1))$ to the solution to the Cole--Hopf solution to the KPZ
equation driven by space-time white noise with variance $\sigma^2$.
\end{theorem}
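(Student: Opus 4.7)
The plan is to reduce the problem to convergence of a linear multiplicative stochastic heat equation (SHE) via the Cole--Hopf transformation. Setting $\tilde h_\eps = h_\eps + C_\eps t$, the process $Z_\eps = \exp(\tilde h_\eps)$ satisfies, by the classical chain rule (since $\eta_\eps$ is a smooth field),
\begin{equ}
\d_t Z_\eps = \d_x^2 Z_\eps + \eps^{3/4} Z_\eps\,\d_x\eta_\eps\;,
\end{equ}
with the renormalisation constant $C_\eps$ absorbed into $Z_\eps$ and no explicit drift surviving. Since $Z_\eps(0) = e^{h_0}$ is bounded and strictly positive, and since the Itô solution $Z$ of the linear SHE is almost surely strictly positive by the Feynman--Kac representation, it suffices to show that $Z_\eps$ converges in law in $\CC(\R_+, \CC^\beta(S^1))$ to the Itô solution $Z$ of $\d_t Z = \d_x^2 Z + \sigma Z\,\xi$ for a suitable $\sigma > 0$; then $h_\eps = \log Z_\eps - C_\eps t$ converges to the Cole--Hopf solution of KPZ with variance $\sigma^2$.

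To identify $\sigma^2$, let $X_\eps$ solve $\d_t X_\eps = \d_x^2 X_\eps + \eps^{3/4}\d_x\eta_\eps$ in stationarity. A direct Fourier computation using $\rho_\eps(z) = \eps^{-3}\rho(z/\eps)$ yields $\E[(\d_x X_\eps(0))^2] = C_0\,\eps^{-3/2}$ with
\begin{equ}
C_0 = \int_{\R^2} \frac{k^4\,|\hat\rho(\omega,k)|^2}{\omega^2 + k^4}\,\frac{d\omega\,dk}{(2\pi)^2}\;,
\end{equ}
which fixes the leading order of $C_\eps$. An analogous scaling of $\int R_\eps(u)^2\,du$, where $R_\eps$ is the covariance of $\d_x X_\eps$, produces an $\eps$-independent, strictly positive constant, so that the Wick square $(\d_x X_\eps)^{\diamond 2} := (\d_x X_\eps)^2 - C_\eps$ converges, in a suitable distributional space, to a space-time white noise of variance
\begin{equ}
\sigma^2 = 2\int_{\R^2} \frac{k^8\,|\hat\rho(\omega,k)|^4}{(\omega^2+k^4)^2}\,\frac{d\omega\,dk}{(2\pi)^2}\;.
\end{equ}
This is the \emph{variance blowup} mechanism of the title: the physical forcing $\eps^{3/4}\d_x\eta_\eps$ vanishes distributionally but its pointwise variance diverges, and the KPZ nonlinearity transmutes this into a non-trivial effective white noise. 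The scaling factor $\eps^{3/4}$ is precisely the one that makes $\int R_\eps^2$ scale-invariant.

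With $\sigma^2$ in hand, the natural route to convergence is the regularity structures framework of \cite{Hai14} applied to KPZ: construct the canonical model associated to the smooth noise $\eps^{3/4}\d_x\eta_\eps$, renormalise by BPHZ with the constant $C_\eps$, and show convergence in probability of this model to the BPHZ model for KPZ driven by space-time white noise of variance $\sigma^2$. Continuity of the solution map on models then yields convergence of $h_\eps$ in $\CC(\R_+, \CC^\beta(S^1))$. The main obstacle is verifying that no \emph{additional} divergences appear in the higher trees beyond the Wick square: the covariance of $\eps^{3/4}\d_x\eta_\eps$ differs substantially from that of space-time white noise at small scales, and one must check, tree by tree, that the relevant higher stochastic objects (in particular products such as $\d_x X_\eps \cdot \d_x I((\d_x X_\eps)^{\diamond 2})$ appearing in the KPZ model) have moments bounded uniformly in $\eps$ and converge to their white-noise counterparts. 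A secondary, more standard, technical point is controlling the initial layer with only $\CC^\alpha$ data, which is handled by the usual modelled-distribution machinery and accounts for the restriction $\beta < \alpha \wedge 1/4$.
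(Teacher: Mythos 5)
The Cole--Hopf algebra is correct, the formula for $\sigma^2$ matches the actual limiting variance, and the heuristic about variance blowup and the $\eps^{3/4}$ scaling is on target. However, the plan contains a genuine gap at exactly the point where the paper's central idea enters.

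You propose to ``construct the canonical model associated to the smooth noise $\eps^{3/4}\d_x\eta_\eps$, renormalise by BPHZ, and show convergence'' to the BPHZ model for KPZ driven by white noise, and you note that one must ``check, tree by tree, that the relevant higher stochastic objects have moments bounded uniformly in $\eps$''. This check would \emph{fail}. Uniformly over $\eps$, the noise $\eps^{3/4}\d_x\eta_\eps$ has parabolic regularity $-7/4^-$ and no better, so in the standard KPZ regularity structure the symbol $(\CI'\Xi)^2$ has degree $-3/2^-$. This is below the threshold of the third condition in \cite[Thm~2.15]{ChHa16} (equivalently the second part of \cite[Ass.~2.31]{SGap}): the \emph{variance} of the corresponding model coefficient does not converge, and the BPHZ convergence theorem does not apply. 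The subtraction of $C_\eps$ fixes the expectation but is powerless against a diverging variance. Switching to the multiplicative SHE for $Z_\eps$ via Cole--Hopf does not dodge this either, since the linear SHE with multiplicative noise of regularity $-7/4^-$ is singular in precisely the same way, and one would face an identical obstruction when trying to build and bound its model.

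The resolution the paper uses, which your proposal is missing, is to first recenter the equation around the linear heat dynamics: writing $h = u - \eps^{3/4}P\star\d_x\eta_\eps$ and introducing $\chi_\eps$ and the already-recentered Wick square $\xi_\eps$, one arrives at the reformulated equation $\d_t h = \d_x^2 h + (\d_x h)^2 + 2\chi_\eps\d_x h + \xi_\eps$. The regularity structure for \emph{this} equation has two independent noise symbols of regularities $-3/2^-$ (for $\xi_\eps$, which converges to white noise) and $-3/4^-$ (for $\chi_\eps$, which converges to zero), and with these inputs the rule \emph{does} satisfy the required variance condition --- the worst composite symbol has degree $-5/4^- > -3/2$. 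Only after this change of regularity structure can one run a BPHZ convergence argument. Even then there is an additional wrinkle: the pair $(\chi_\eps,\xi_\eps)$ is not itself Gaussian and does not obviously satisfy a spectral gap inequality, so the paper tracks Malliavin derivatives with respect to the \emph{underlying} Gaussian field $\eta$ (via the machinery of \cite{SGap}), rather than working with moment bounds on $(\chi_\eps,\xi_\eps)$ directly. One also needs a symmetry argument (parity of the number of $\d_x\hat K$ factors and of the noise chaos order) to rule out counterterms of the form $C_\eps\,\d_x h$ that the general theory would otherwise allow. None of these steps appear in your outline, and without the recentering step in particular the argument cannot close.

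Two smaller remarks. First, the identification of the limiting solution as the Cole--Hopf solution is not automatic from the convergence of models; the paper invokes \cite[Rem.~1.8]{BGHZ22} for this, possibly after adjusting $C_\eps$ by an $O(1)$ amount. Second, the stable-convergence aspect (independence of the limiting white noise from $\eta$) is established via a fourth-moment argument rather than just covariance scaling, which matters if one wants the quantitative statement rather than only the heuristic picture.
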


A surprising feature of this result is that the limiting KPZ equation is driven by 
\textit{space-time white noise} while $\d_x \eta$, suitably rescaled, converges to the spatial 
derivative of such a noise. So where does this driving noise come from? The basic observation is
that, writing $P$ for the heat kernel and $\star$ for space-time convolution, 
$(\d_x P \star \d_x\eta)^2$ suitably rescaled and recentered converges weakly to a space-time white noise
and it is this noise that drives the limiting KPZ equation. Furthermore, asymptotically as
$\eps \to 0$, this driving noise is actually \textit{independent} of the original field $\eta$ in the 
sense of stable convergence \cite{JacodLimit, StableBook}.

\subsection{SDEs driven by fractional noise}

We also show that a similar phenomenon arises when considering SDEs driven by fractional
Brownian motion with Hurst parameter $H \le 1/4$. Let $\{B_i\}_{i=1}^m$ be i.i.d.\ copies of a fractional Brownian motion with some fixed Hurst parameter $H \le 1/4$ and let $B_i^\eps$ be their convolution
with a mollifier at scale $\eps$. Let furthermore $V_i$ be $m$ vector fields on 
$\R^d$ (this could be replaced by a manifold) and let $x^\eps$ be the solution to the 
controlled differential equation
\begin{equ}[e:SDEfbm]
dx^\eps = C_\eps \sum_{i=1}^m V_i(x^\eps)\,dB_i^\eps\;,
\end{equ}
where we set $C_\eps = \eps^{1/4-H}$ when $H < 1/4$ and $C_\eps = \abs{\log \eps}^{-1/2}$
when $H = 1/4$. We then have the following convergence result.

\begin{theorem}\label{thm:mainfbm}
There exists a finite constant $\sigma>0$ and independent standard Wiener processes $W_{ij}$
such that $x^\eps$ converges in law to the diffusion process $x$ solving
\begin{equ}[e:limitDiffusion]
dx = \frac\sigma2\sum_{i<j} [V_i,V_j](x)\circ dW_{ij}\;,
\end{equ}
where ${}\circ dW$ denotes Stratonovich integration against $W$ and $[V,\bar V]$ denotes the Lie bracket
of two vector fields $V$, $\bar V$.
\end{theorem}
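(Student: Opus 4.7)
My plan is to expand $x^\eps$ to second order in the driving noise and show that the leading nontrivial contribution comes from the antisymmetric part of the rescaled Lévy area. Writing $\delta B^\eps_i(s,t)=B^\eps_i(t)-B^\eps_i(s)$ and $\mathbb{A}^\eps_{ij}(s,t)=\int_s^t \delta B^\eps_i(s,u)\,dB^\eps_j(u)$, iterating \eqref{e:SDEfbm} once yields
\begin{equ}
x^\eps(t)-x^\eps(s) = C_\eps\sum_i V_i(x^\eps(s))\,\delta B^\eps_i(s,t) + C_\eps^2\sum_{i,j}(DV_j\,V_i)(x^\eps(s))\,\mathbb{A}^\eps_{ij}(s,t) + R^\eps(s,t).
\end{equ}
By integration by parts $\mathbb{A}^\eps_{ij}+\mathbb{A}^\eps_{ji}=\delta B^\eps_i\,\delta B^\eps_j$, so the symmetric-in-$(i,j)$ part of the double sum produces a ``level-1 squared'' contribution that vanishes in the limit, while the antisymmetric part pairs with the Lie bracket $DV_j\,V_i-DV_i\,V_j=[V_i,V_j]$. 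The philosophy is exactly the one announced in the introduction: the chosen scaling $C_\eps=\eps^{1/4-H}$ (or $|\log\eps|^{-1/2}$) makes $C_\eps B^\eps\to 0$ while $C_\eps^2$ is precisely the renormalisation needed to keep the variance of the Lévy area finite.

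\textbf{Convergence of the rescaled Lévy area.}
The central probabilistic step is to show that
\begin{equ}
\bigl(\tfrac{1}{2}C_\eps^2(\mathbb{A}^\eps_{ij}(0,\cdot)-\mathbb{A}^\eps_{ji}(0,\cdot))\bigr)_{i<j}\;\Longrightarrow\;(\sigma W_{ij})_{i<j}
\end{equ}
jointly in $C(\R_+,\R)$, with the $W_{ij}$ independent standard Brownian motions. Each coordinate lives in the second homogeneous Wiener chaos of the Gaussian family $\{B_i\}$, so marginal convergence to a Gaussian reduces via the Nualart--Peccati fourth moment theorem to the covariance computation $\lim_{\eps\to 0} C_\eps^4\,\bE[\mathbb{A}^\eps_{ij}(0,s)\mathbb{A}^\eps_{ij}(0,t)]=\sigma^2(s\wedge t)$, the $\eps$-divergence of the unrescaled variance being precisely cancelled by $C_\eps^4$; joint Gaussianity follows from the same theorem applied to linear combinations. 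Asymptotic independence of the $W_{ij}$ for distinct unordered pairs is built into the bilinear structure, since $\bE[\mathbb{A}^\eps_{ij}\mathbb{A}^\eps_{i'j'}]=0$ whenever $\{i,j\}\neq\{i',j'\}$. Functional tightness in a Hölder space then follows from Kolmogorov's criterion, with the required $p$-th moments controlled by Gaussian hypercontractivity on the second chaos.

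\textbf{Passage to the limit and main obstacle.}
A priori Hölder control on $x^\eps$ itself comes from the smallness of $C_\eps\|B^\eps\|_{\CC^\alpha}$ for small enough $\alpha$, which allows one to close a fixed-point argument uniformly in $\eps$ and to show that $R^\eps(s,t)$ is negligible on the natural parabolic scale. The level-1 contribution $C_\eps V_i(x^\eps(s))\delta B^\eps_i(s,t)$ also vanishes, since $C_\eps^2\,\bE[|\delta B^\eps_i(s,t)|^2]\to 0$ uniformly for $|t-s|$ in any compact. Combining these with the area-convergence yields joint tightness of the pair $(x^\eps,\,C_\eps^2(\mathbb{A}^\eps_{ij}-\mathbb{A}^\eps_{ji})_{i<j})$, and any subsequential limit is identified as a solution of the martingale problem associated to \eqref{e:limitDiffusion} by passing to the limit in the Taylor expansion; the Stratonovich form emerges naturally from the ``midpoint'' structure inherent in the Lévy area. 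The main obstacle is this last identification: since $H\le 1/4$ is strictly below the regime in which rough path continuity applies, one cannot simply feed the limit of the area process into the limit of the path. One must instead argue that the Lévy-area fluctuations become, in the limit, asymptotically independent of any level-1 functional of $x^\eps$ (the stable-convergence perspective invoked in the KPZ discussion of the introduction), and then promote this to a quantitative statement uniform over the mesh at which the Taylor expansion is unfolded.
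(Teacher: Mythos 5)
Your overall intuition is right \dash the antisymmetric part of the rescaled L\'evy area converges to a Wiener process, and the Lie bracket emerges from the antisymmetry \dash but there is a genuine gap precisely at the place you flag as the ``main obstacle'', and the workaround you propose (stable convergence / asymptotic independence, followed by a martingale-problem identification) is not what makes the argument close. The resolution is that rough-path continuity \emph{does} apply here; you just need to use it at the right order. Since $C_\eps B_\eps$ is uniformly bounded in $\CC^\alpha$ only for $\alpha<1/4$, the relevant rough-path topology is $\cC_g^\alpha$ with $\alpha\in(1/5,1/4)$, and there a rough path has $\lfloor 1/\alpha\rfloor=4$ levels. The paper lifts $\xi_\eps=\dot B_\eps$ to a four-level geometric rough path $\X_\eps=(\X_\eps^{(1)},\ldots,\X_\eps^{(4)})$ via \eqref{e:defLift}, proves tightness (Proposition~\ref{prop:tight}), and shows convergence in $\cC_g^\alpha$ to the specific limit $\X$ with $\X^{(1)}=\X^{(3)}=0$, $\X^{(2)}=\sigma\,\delta W$, $\X^{(4)}=\sigma^2\int \delta W\circ dW$. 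Once this is established, the joint continuity of the rough-path solution map finishes the identification with no stable-convergence argument needed; the point is that the limiting rough path is a legitimate object (with a vanishing first level), not that rough-path theory has to be abandoned. Your second-order Taylor expansion with a remainder $R^\eps$ is therefore too coarse: it cannot be closed uniformly in $\eps$ at regularity $\alpha<1/4$, because a self-consistent local expansion at that regularity requires tracking terms up to level four.

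Two further points you underestimate. First, the technically hardest step in the paper is not the convergence of $\X_\eps^{(2)}$ (which is indeed handled with the fourth-moment theorem, as you propose) but the identification of the limit of $\X_\eps^{(4)}$ as the Stratonovich iterated integral of the limiting $W$. This requires the cutoff-kernel construction of Section~\ref{sec:toolConv} (the $\chi_\delta$-mollified iterated integral $\B^{(2)}_{\eps,\delta}$ and Theorem~\ref{theo:boundepsdelta}), together with delicate cancellations between diagrams that are \emph{not} individually small, as in \eqref{e:mainTermsDelta}; the shuffle identities and the vanishing of repeated-index components \eqref{e:doubleIndex} and \eqref{e:antisym} are used to reduce the problem but do not trivialise it. Saying the Stratonovich form ``emerges naturally from the midpoint structure'' glosses over exactly this. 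Second, once the convergence $\X_\eps\to\X$ is in hand, one still has to show that solving the level-four RDE $dx=V_i(x)\,d\X_i$ with $\X^{(1)}=\X^{(3)}=0$ is the same as solving the Stratonovich SDE \eqref{e:limitDiffusion}. This is a purely algebraic fact proved in Theorem~\ref{thm:mainRigorous} using the shuffle relations \eqref{e:shuffle}, and it is what actually produces the $\tfrac12[V_i,V_j]$ coefficient; your expansion implicitly assumes this but does not justify it at the level of rough paths of the required order.
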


\begin{remark}
Convergence takes place uniformly in time
until the first time solutions to \eqref{e:limitDiffusion} get large,
in particular we do not need to assume global well-posedness. See Theorem~\ref{thm:mainRigorous} 
below for a precise formulation taking topologies into account.
\end{remark}

\begin{remark}
It was shown in \cite{unterberger2008central} that the Lévy area of two independent 
fractional Brownian motions with $H < \f14$ converges to an independent Wiener process,
which strongly hints at Theorem~\ref{thm:mainfbm} but is far from sufficient to prove it.
A result with a somewhat similar flavour (a Brownian motion independent of the
original fractional Brownian motion showing up as a second-order process) was
previously obtained when $H = 1/4$ in \cite{Nourdin,Chris}. 
\end{remark}

The proof of these two results suggests the following general mechanism. Take a class of SPDEs (or SDEs) 
driven by noise of regularity $\alpha < 0$. 
What we mean here by a ``class'' is that we fix a rule in the 
sense of \cite{BHZ} as well as an ambient space(-time) dimension and consider all
equations that can be formulated within the corresponding regularity structure.
One the one hand, there then exists some $\alpha_c$ such that
the problem is locally subcritical (in the sense of \cite{Hai14}) when $\alpha > \alpha_c$ and fails to 
be so when $\alpha < \alpha_c$. On the other hand, there exists $\alpha_v$ such that 
the third condition of \cite[Thm~2.15]{ChHa16} is satisfied for $\alpha > \alpha_v$ and fails for $\alpha < \alpha_v$. This condition essentially states that the homogeneities of all ``composite'' symbols
(i.e.\ those containing at least two instances of a noise) are greater than that of space-time white
noise.
This threshold can be understood intuitively by the fact that the variance of white noise 
is a delta-function which, in dimension $d$, behaves like $|x|^{-d}$ in terms of scaling, thus
representing the boundary between integrability and non-integrability of the variance.
When it fails, the \textit{variance} of some of the stochastic objects
relevant in the description of the solution blows up, which cannot be controlled by the addition
of counterterms in the equation (these can only counteract the divergence of \textit{expectations}).

What our result shows is that if we are in a situation where $\alpha_c < \alpha_v$ and we choose as driving noise
a mollification at scale $\eps$ of a noise that is self-similar with some exponent $\beta < \alpha_v$, then we
obtain a non-trivial limit driven by (space-time) white noise within the same class of 
equations, provided that we multiply our driving noise with $\eps^{\alpha_v - \beta}$ (or
a suitable negative power of $\abs{\log\eps}$ if $\beta = \alpha_v$).
Indeed in the example of the KPZ equation, one has $\alpha_c = -2$ and $\alpha_v = - 7/4$. Our driving noise is a
mollification of the derivative of space-time white noise which is self-similar with exponent
$-5/2$, so that this heuristic suggests that we multiply it by $\eps^{3/4}$ (since $3/4 = -7/4+5/2$)
in order to obtain a non-trivial limit.
In the case of SDEs \slash RDEs one has $\alpha_c = -1$ and $\alpha_v = -3/4$ while our driving noise has
self-similarity exponent $\beta = H-1$ (the driving noise is the derivative of our fractional Brownian motion), 
which again leads to the correct prediction.

Note that there is in general no particular ordering between $\alpha_c$ and $\alpha_v$, especially since the latter
is dimension-dependent while the former is not. In the case of 
the $\Phi^4$ equation, one has for example $\alpha_c = -3$ and $\alpha_v = -(d+14)/6$ so that the phenomenon
described in this article is expected to take place in spatial dimension $d < 4$, which covers
the whole subcritical regime. 

One point that the reader should keep in mind is that, depending on how one interprets an equation, one may be
lead to descriptions of its solutions using different regularity structures which could potentially lead
to different values for $\alpha_v$. For example, when writing the Navier--Stokes nonlinearity in its
usual form as $(u\cdot\nabla)u$, one is lead to $\alpha_v = -(d+8)/4$. If however we remember that,
thanks to the divergence free condition, it can equivalently be written as $\div(u\otimes u)$,
we find that one actually has $\alpha_v = -(d+10)/4$.
In other words, while $\alpha_v$ determines the noise regularity at which the variance of some
stochastic object \textit{may} diverge, it does not guarantee that it \textit{will} diverge
since there could be additional problem-specific cancellations.

\subsection{Notations}

We will always use parabolic scalings so that, given $z = (t,x) \in \R^2$ and $\lambda \in \R$, we write
\begin{equ}
\lambda z = (\lambda^2 t, \lambda x)\;,\qquad |z| = \sqrt{|t|} + |x|\;.
\end{equ}
Test functions $\phi$ are always smooth, supported in the unit ball, and such that $\sup_z \abs{D^k \phi(z)} \le 1$
for all $|k| \le 10$ (say). As usual, we denote its rescaled translates by $\phi^\lambda_z(z_1) = \lambda^{-3}\phi(\lambda^{-1}(z_1-z))$.

\section{Convergence to KPZ}

Writing $h = u - \eps^{3/4} P\star  \d_x\eta_\eps$, setting 
\begin{equ}[e:defxieps]
\xi_\eps = \eps^{3/2}(\d_x P \star \d_x\eta_\eps)^2 - C_\eps\;,
\end{equ}
with $C_\eps$ such that $\E \xi_\eps \equiv 0$,
and setting
\begin{equ}
\chi_\eps = \eps^{3/4}\d_x P \star \d_x\eta_\eps = \eps^{3/4}\d_x^2 P \star \rho_\eps \star \eta\;,
\end{equ}
we can rewrite
this equation suggestively as
\begin{equ}[e:basich]
\d_t h = \d_x^2 h +  (\d_x h)^2 + 2\chi_\eps \d_x h + \xi_\eps  \;.
\end{equ}
At the intuitive level, our main result then follows from Proposition~\ref{prop:convNoise} combined
with the fact that $\chi_\eps \to 0$. This is of course far from a proof since $\eps^{3/4} \d_x\eta_\eps \to 0$, 
so essentially the same argument would suggest
that \eqref{e:strongAsym} converges to a deterministic limit!

The main reason why the formal argument works in the case of \eqref{e:basich} but fails in the case
of \eqref{e:basic} is that if we apply the methodology of \cite{Hai14,ChHa16,Algebraic,Renorm} 
to \eqref{e:basic} (using the fact that, uniformly over
$\eps$, one has $\eps^{3/4} \d_x\eta_\eps \in \CC^{-7/4^-}$ with parabolic scaling but no better), then we end up 
generating a symbol containing two instances of the noise, but with degree $-3/2^-$,
thus (just about) failing to satisfy the third condition of \cite[Thm~2.15]{ChHa16}. In fact, an analogous 
condition first appeared in the context of rough paths in \cite{CQ00} and also appears for example
in \cite[Ass.~2.1]{SGFelix} and
the second part of \cite[Ass~2.31]{SGap}.
When applying it to \eqref{e:basich} on the other hand, the ``worst'' symbol with more 
than one noise that arises corresponds to $\chi_\eps \cdot (\d_xP\star \xi_\eps)$
and is of degree $-5/4^- >-3/2$.

\subsection{Convergence of the noise}

We use graphical notations similar to those of \cite{Hai14,HaoCLT}. In particular, we use plain nodes \tikz[baseline=-0.25em] \node[dot] {}; 
for integration variables, a coloured node  \tikz \node[root] {}; for a variable taking the fixed value $0$, and 
\tikz \node[var] {};
for an integration variable at which we furthermore evaluate an instance of the white noise $\eta$. 
We group multiple white noises together like so \tikz[baseline=-0.3em] \node[Wick] {}; to indicate that their product should be interpreted
as a Wick product.

As usual, all of our variables are space-time variables. 
Regarding edges (which represent kernels evaluated at the difference
between the two variables represented by the nodes they connect), 
we write \tikz[baseline=-0.25em] \draw[rho] (0,0) -- (1,0); for the mollifier $\rho_\eps$, 
\tikz[baseline=-0.25em] \draw[testfcn,->] (0,0) -- (1,0); for a test function $\phi^\lambda(t,x) $ centred at the origin and 
rescaled at some scale $\lambda$ (if no scale is specified one takes $\lambda = 1$, different test functions
are drawn with different colors), and
\tikz[baseline=-0.25em] \draw[kernel] (0,0) -- (1,0); for the kernel
$(t,x) \mapsto \eps^{3/4} \d_x^2 P_t(x)$. In particular, this graphical notation yields
\begin{equ}[e:graphics]
\scal{\eta_\eps, \phi} = 
\begin{tikzpicture}[scale=0.35,baseline=0cm]
	\node at (0,-1)  [root] (root) {};
	\node at (-1.5,0.5)  [dot] (left) {};
	\node at (0,2) [var] (variable1) {};
	
	\draw[testfcn] (left) to  (root);	
	\draw[rho] (variable1) to (left); 
\end{tikzpicture}\;,\quad
\scal{\chi_\eps, \phi} = 
\begin{tikzpicture}[scale=0.35,baseline=0cm]
	\node at (0,-1)  [root] (root) {};
	\node at (-1.5,0)  [dot] (left) {};
	\node at (-1.5,1.5)  [dot] (left1) {};
    
	\node at (0,2.5)  [var] (variable1) {};

	\draw[testfcn] (left) to  (root);	
	\draw[kernel] (left1) to (left);
	\draw[rho] (variable1) to (left1); 
\end{tikzpicture}\;,\quad
\scal{\xi_\eps, \phi} = 
\begin{tikzpicture}[scale=0.35,baseline=0cm]
    \node [Wick, minimum width=1cm, minimum height=0.5cm, anchor=center] at (0,2.5) {};

	\node at (0,-1.5)  [root] (root) {};
	\node at (0,0)  [dot] (bot) {};
	\node at (-1.5,1.5)  [dot] (left1) {};

	\node at (1.5,1.5)  [dot] (right1) {};

	\node at (-.75,2.5)  [var] (variable1) {};
	\node at (.75,2.5)  [var] (variable2) {};
	
	\draw[testfcn] (bot) to  (root);	
	\draw[kernel] (left1) to (bot);
	\draw[kernel] (right1) to (bot);
	\draw[rho] (variable1) to (left1); 
	\draw[rho] (variable2) to (right1); 
\end{tikzpicture}
\end{equ}
An important role will be played by the covariance $K_\eps$ of $\chi_\eps$
as well as the correlation between $\eta_\eps$ and $\chi_\eps$, so we introduce the 
shorthand graphical notations
\begin{equ}
K_\eps = \tikz[baseline=-0.25em] \draw[kernelBig] (0,0) -- (1,0); =
\begin{tikzpicture}[scale=0.35,baseline=-0.1cm]
	\node at (-2,0)  [dot] (left) {};
	\node at (0,0)  [dot] (mid) {};
	\node at (2,0)  [dot] (right) {};
    
	\draw[kernel] (left) to (-4,0);
	\draw[kernel] (right) to (4,0);
	\draw[rho] (left) to (mid); 
	\draw[rho] (right) to (mid); 
\end{tikzpicture}\;,\qquad
\bar K_\eps = \tikz[baseline=-0.25em] \draw[kernelBig2] (0,0) -- (1,0); =
\begin{tikzpicture}[scale=0.35,baseline=-0.1cm]
	\node at (-2,0)  [dot] (left) {};
	\node at (0,0)  [dot] (mid) {};
    
	\draw[rho] (left) to (-4,0);
	\draw[rho] (left) to (mid); 
	\draw[kernel] (mid) to (right); 
\end{tikzpicture}\;.
\end{equ}
The following is a straightforward consequence of the scaling properties of $\rho_\eps$ and the heat kernel.

\begin{lemma}\label{lem:boundKeps}
One has $K_\eps(z) = \eps^{-3/2} K(z/\eps)$ for a smooth function $K$ satisfying the bounds
$\abs{K(z)} \lesssim (1+|z|)^{-3}$.
Similarly, one has $\bar K_\eps(z) = \eps^{-9/4} \bar K(z/\eps)$ for a smooth function $\bar K$ satisfying the bounds
$\abs{\bar K(z)} \lesssim (1+|z|)^{-3}$.
\qed
\end{lemma}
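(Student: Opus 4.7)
The plan is to reduce both identities to their ``$\eps=1$'' versions by a change of variables exploiting parabolic homogeneity, and then to establish the decay by combining pointwise Gaussian bounds on the heat kernel with a moment-cancellation argument. Write $F_\eps \defeq \d_x^2 P \star \rho_\eps$; since $\eta$ is space-time white noise and $\chi_\eps = \eps^{3/4} F_\eps \star \eta$, the covariance $K_\eps$ equals $\eps^{3/2}$ times the convolution of $F_\eps$ with its reflection, while $\bar K_\eps$ equals $\eps^{3/4}$ times the convolution of the reflection of $\rho_\eps$ with $F_\eps$. Combining the parabolic homogeneity $(\d_x^2 P)(\lambda z) = \lambda^{-3}(\d_x^2 P)(z)$ with $\rho_\eps(z) = \eps^{-3}\rho(z/\eps)$ yields $F_\eps(z) = \eps^{-3}F_1(z/\eps)$ where $F_1 \defeq \d_x^2 P \star \rho$, so the substitution $w = \eps\tilde w$, $z = \eps\tilde z$ in the convolution integrals produces $K_\eps(z) = \eps^{-3/2}K(z/\eps)$ with $K(z) = \int F_1(w) F_1(w-z)\,dw$, and $\bar K_\eps(z) = \eps^{-9/4}\bar K(z/\eps)$ with $\bar K(z) = \int \rho(-w) F_1(z-w)\,dw$.

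For the pointwise bounds, I would first establish $|F_1(z)| \lesssim (1+|z|)^{-3}$. The explicit Gaussian form $(\d_x^2 P_t)(x) = (4\pi)^{-1/2}t^{-5/2}(x^2/4 - t/2)e^{-x^2/4t}\mathbf{1}_{t>0}$ gives $|(\d_x^2 P)(z)| \lesssim (1+|z|)^{-3}$ together with Gaussian decay in the spatial direction whenever $|x|^2 \gg t$; since $\rho$ is compactly supported, the same bound (with the Gaussian spatial tail) is inherited by $F_1$ outside a neighbourhood of the origin, and smoothness of $F_1$ is automatic. The estimate on $\bar K$ is then immediate: compact support of $\rho$ gives $|\bar K(z)| \lesssim \sup_{|u - z| \le 1}|F_1(u)| \lesssim (1+|z|)^{-3}$ for $|z|$ large, with boundedness on compact sets from smoothness.

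The bound on $K$ is where the work lies, and is the main obstacle. A direct split of $K(z) = \int F_1(w)F_1(w-z)\,dw$ into the regions $|w| \le |z|/2$ and its complement produces only $(1+|z|)^{-3}\log(1+|z|)$, because $F_1$ is borderline non-integrable: the Gaussian concentration gives $\int |F_1(t,x)|\,dx \lesssim 1/(1+|t|)$, whose $t$-integral diverges logarithmically. The key to eliminating the logarithm is the spatial-mean cancellation $\int_\R F_1(t,x)\,dx = 0$ for every $t$, inherited from $\int_\R (\d_x^2 P_t)(x)\,dx = 0$. This lets me replace $F_1(s+t, y+x)$ inside the $y$-integral by the centred version $F_1(s+t, y+x) - F_1(s+t, x)$; a Taylor expansion in $y$, combined with the Gaussian concentration of $F_1(s,\cdot)$ near $y = 0$, then produces the missing power of $|z|$ needed to upgrade the estimate to the clean $(1+|z|)^{-3}$ bound. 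A cleaner alternative is to pass to Fourier, where $\hat K(\omega,k) = k^4|\hat\rho(\omega,k)|^2/(\omega^2+k^4)$ is bounded, Schwartz at infinity, and parabolically homogeneous of degree zero at the origin, so that standard homogeneity arguments give that $K$ is parabolically $(-3)$-homogeneous at infinity modulo a smooth remainder.
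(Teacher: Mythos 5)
There is no written proof in the paper to compare against — the sentence before the lemma declares it a straightforward consequence of scaling and the end-of-proof symbol appears immediately. Your scaling computation, the formula for $\d_x^2 P$, the smoothness claims, and the bound on $\bar K$ (which indeed follows just from compact support of $\rho$) are all correct. More importantly, you have correctly spotted a genuine subtlety hidden behind the word ``straightforward'': since $\int_\R|F_1(t,x)|\,dx\approx(1+|t|)^{-1}$, bounding $K(z)=\int F_1(z+w)F_1(w)\,dw$ by absolute values really does cost a logarithm, so some cancellation must be exploited. Both of your remedies can be pushed through, but both are only sketched: the Taylor-expansion argument must be run in all regimes of $z$ (not just the temporal one; when $|x|\gtrsim\sqrt{|t|}$ the Gaussian tail, not the mean-zero cancellation, does the work), and the Fourier argument as stated — that a bounded, parabolically $0$-homogeneous symbol near the origin inverse-transforms to something $(-3)$-homogeneous modulo a smooth remainder — needs care, since the Fourier transform of a degree-$0$ symbol generically carries a Dirac mass at the origin and the remainder $R=\tfrac{k^4}{\omega^2+k^4}\bigl(|\hat\rho|^2-1\bigr)$, while small near $0$, is still discontinuous there.

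A cleaner route, arguably the one the line ``scaling properties of the heat kernel'' alludes to, reduces the delicate $K$ bound to the trivial $\bar K$ bound you already have. In one spatial dimension $\d_x^2 P=\d_t P-\delta_0$, so $\widehat{\d_x^2 P}(\omega,k)=-k^2/(i\omega+k^2)$ and hence $\bigl|\widehat{\d_x^2 P}\bigr|^2=-\Re\,\widehat{\d_x^2 P}$. Multiplying by $|\hat\rho|^2$ gives $\hat K=-\Re\,\widehat{\bar K}$, i.e.
\begin{equ}
K(z) = -\tfrac12\bigl(\bar K(z)+\bar K(-z)\bigr)\;,
\end{equ}
so that $|K(z)|\lesssim(1+|z|)^{-3}$ follows directly from the compact-support estimate on $\bar K$, with no logarithm to fight. (Equivalently, $K=-\tfrac12\,\d_x^2 P_{|t|}(x)\star\rho\star\tilde\rho$ with $\tilde\rho(\cdot)=\rho(-\cdot)$, and the decay is read off from the pointwise decay of $\d_x^2 P$ and the compact support of $\rho\star\tilde\rho$.)
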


\begin{proposition}\label{prop:convNoise}
Let $\xi_\eps$ be as in \eqref{e:defxieps}. Then, there exists
$\sigma > 0$ such that $\xi_\eps$ converges stably 
in $\CC^\alpha$ for any $\alpha < -3/2$ to a space-time white noise $\xi$
with variance $\sigma^2$ independent of $\eta$.
\end{proposition}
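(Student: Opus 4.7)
The plan is to exploit the fact that $\xi_\eps$ belongs to the second Wiener chaos of $\eta$ and apply a chaos CLT of Nualart--Peccati type. Writing $\chi_\eps(z) = I_1(K^\eps_z)$ with $K^\eps_z(z') = \eps^{3/4}(\d_x^2 P \star \rho_\eps)(z-z')$, the Wick ordering gives $\scal{\xi_\eps, \phi} = I_2(\tilde f_\eps^\phi)$ with $\tilde f_\eps^\phi(z_1,z_2) = \int \phi(z) K^\eps_z(z_1) K^\eps_z(z_2)\,dz$. The second-chaos isometry yields
$$
\E\bigl[\scal{\xi_\eps,\phi}\scal{\xi_\eps,\psi}\bigr] = 2\int \phi(z)\psi(z') K_\eps(z-z')^2\,dz\,dz'\;.
$$
By Lemma~\ref{lem:boundKeps} one has $K_\eps(z)^2 = \eps^{-3}K(z/\eps)^2$ with $K^2 \in L^1(\R^2)$ (the bound $|K(z)|^2 \lesssim (1+|z|)^{-6}$ is integrable in parabolic effective dimension $3$), so $K_\eps^2$ converges to $\sigma^2 \delta_0$ in the sense of distributions with $\sigma^2 = 2\|K\|_{L^2}^2$. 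The covariance therefore converges to $\sigma^2 \scal{\phi,\psi}_{L^2}$, identifying the putative limit as a space-time white noise with variance $\sigma^2$.

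Tightness in $\CC^\alpha$ for $\alpha < -3/2$ follows from hypercontractivity in the second chaos combined with the $\eps$-uniform bound $\E|\scal{\xi_\eps,\phi^\lambda_z}|^2 \lesssim \lambda^{-3}$. For $\lambda \ge \eps$ this is immediate from the covariance formula and $\int K_\eps^2 \le \|K\|_{L^2}^2$, while for $\lambda < \eps$ the crude pointwise bound $|K_\eps| \lesssim \eps^{-3/2}$ together with $\|\phi^\lambda\|_{L^1} = O(1)$ gives $\E|\scal{\xi_\eps,\phi^\lambda_z}|^2 \lesssim \eps^{-3} \le \lambda^{-3}$. Standard Kolmogorov-type arguments as in \cite{Hai14} then yield tightness.

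The main step is convergence of finite-dimensional distributions to a Gaussian. For this I compute the higher cumulants of $I_2(\tilde f_\eps^\phi)$. The $n$-th cumulant is, up to a combinatorial constant, the $n$-cycle trace
$$
\kappa_n(\eps) = \int \phi(z_1)\cdots \phi(z_n) K_\eps(z_1-z_2) K_\eps(z_2-z_3) \cdots K_\eps(z_n-z_1)\,dz_1\cdots dz_n\;.
$$
Plugging in $K_\eps(w) = \eps^{-3/2} K(w/\eps)$ and performing the substitution $z_{i+1}-z_i = \eps w_i$, each of the $n-1$ free integrations contributes a factor $\eps^3$ while each of the $n$ kernels contributes $\eps^{-3/2}$, giving $\kappa_n(\eps) = O(\eps^{3n/2-3})$. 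For $n=2$ this is $O(1)$ (recovering the variance), while for $n \ge 3$ it vanishes, so by the Nualart--Peccati fourth-moment theorem each $\scal{\xi_\eps,\phi}$ converges in law to a centred Gaussian with the correct variance, and Cramér--Wold together with the bilinearity in $\phi$ upgrades this to joint convergence of all finite-dimensional marginals. The bulk of the technical work sits here: one has to check that the \emph{base} $n$-cycle $\int K(w_1)\cdots K(w_{n-1})K(w_1+\cdots+w_{n-1})\,dw_1\cdots dw_{n-1}$ is actually finite (which is not immediate from the pointwise bound on $K$ alone and requires a Young-type argument), and to trade the pointwise $\phi$-factors for a clean uniform bound.

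Finally, for stable convergence and independence from $\eta$, I apply the Peccati--Tudor theorem to the mixed-chaos vector
$\bigl(\scal{\xi_\eps,\phi_1},\ldots,\scal{\xi_\eps,\phi_k},\scal{\eta,\psi_1},\ldots,\scal{\eta,\psi_\ell}\bigr)$,
whose entries live in the second and first Wiener chaos of $\eta$ respectively. Orthogonality of Wiener chaoses makes the cross-covariances identically zero, so the limiting covariance matrix is block-diagonal; the contraction (equivalently fourth-moment) conditions hold trivially on the first-chaos coordinates and from the previous paragraph on the second-chaos ones. Hence the joint limit is Gaussian and splits as an independent pair $(\xi,\eta)$ with $\xi$ a space-time white noise of variance $\sigma^2$. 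Combining this with the tightness from the previous step and using that probability measures on $\CC^\alpha$ are characterised by their evaluation against test functions gives the announced stable convergence.
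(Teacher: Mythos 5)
Your overall architecture is essentially the paper's: reduce stable convergence to joint weak convergence of $(\xi_\eps,\eta_\eps)$, get tightness from second-moment bounds and chaos equivalence of moments, compute the limiting covariance from $K_\eps^2\to\sigma^2\delta_0$, and kill the fourth cumulant to invoke a Nualart--Peccati / Peccati--Tudor theorem. The one place you genuinely diverge from the paper is in how the vanishing of the fourth cumulant is established, and this is where your writeup has a flagged gap that you point in the wrong direction.

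Your scaling substitution correctly gives the prefactor $\eps^{3n/2-3}$, but the remaining integral is over a domain that expands like $\eps^{-1}$, so the argument is only complete once one knows the base $n$-cycle
$\int K(w_1)\cdots K(w_{n-1})K(w_1+\cdots+w_{n-1})\,dw$
is finite, which you flag. However, this is \emph{not} a Young-type argument: Young fails outright since $K\notin L^1$ (the decay $(1+|z|)^{-3}$ is critical in parabolic effective dimension $3$). What actually works (for $n=4$, the only case you need) is an iterated convolution estimate: one shows $K^{*2}(w)\lesssim(1+|w|)^{-3}\log(2+|w|)$ by splitting into near-$0$, near-$w$, and far regions, which gives $K^{*2}\in L^2$ and hence $K^{*4}(0)=\|K^{*2}\|_{L^2}^2<\infty$. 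The paper avoids this hand computation entirely by invoking Weinstein's theorem \cite[Prop.~2.3]{Hai17}, viewing $K_\eps$ as a kernel of degree $-3/2-\kappa$ with norm $O(\eps^\kappa)$; the power counting then delivers the bound $\lesssim\eps^{4\kappa}$ on the $4$-cycle uniformly, absorbing the finiteness question and the $\eps$-uniformity in one stroke. So: same strategy, but the paper's route through Weinstein's theorem is more robust, whereas your direct scaling is valid only after you do the (nontrivial but elementary) base-cycle estimate. Two smaller remarks: the fourth-moment theorem only requires $n=4$, so the general-$n$ computation is superfluous; and the last paragraph would be cleaner if you invoked, as the paper does, \cite[Prop.~3.4~(c)]{StableBook} to reduce stable convergence to joint weak convergence of $(\xi_\eps,\eta_\eps)$, rather than appealing informally to tightness plus finite-dimensional marginals.
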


\begin{proof}
To show stable convergence, it suffices  by \cite[Prop.~3.4~(c)]{StableBook}
to show that the pair $(\xi_\eps, \eta_\eps)$
converges weakly in $\CC^\alpha\times \CC^\alpha$ to a pair of independent white noises.

We first note that $\xi_\eps$ and $\eta_\eps$ are decorrelated and that one has
\begin{equ}
\abs{\E \scal{\phi_z^\lambda, \xi_\eps}^2}
= 2\Bigl|\int \phi_z^\lambda(z_1)\phi_z^\lambda(z_2) K_\eps^2(z_1-z_2)\,dz_1\,dz_2\Bigr|\;.
\end{equ}
Since $\phi_z^\lambda$ is supported in a parabolic ball of radius $\lambda$, this integral is 
at most of order $\lambda^3 \|\phi^\lambda\|_{L^\infty}^2 \|K_\eps^2\|_{L^1} \approx \lambda^{-3}$.
Since a similar bound holds for $\eta_\eps$, equivalence of moments for elements in a fixed Wiener
chaos combined with \cite[Thm~1.1]{Kolmogorov} yields tightness in $\CC^\alpha\times \CC^\alpha$. 

Since $K_\eps^2$ converges to some multiple of a Dirac mass, it immediately follows similarly to above that
there exists $\sigma>0$ such that 
\begin{equ}
\lim_{\eps \to 0}\E \scal{\phi, \xi_\eps}\scal{\xi_\eps,\psi}
= \sigma^2\scal{\phi,\psi}_{L^2}\;,
\end{equ}
and similarly for $\eta_\eps$, for any continuous compactly supported test functions $\phi$ and $\psi$.

It therefore remains to show that any collection of random variables of the form $\xi_\eps(\phi)$ 
and \slash or $\eta_\eps(\psi)$ converges to a jointly Gaussian limit as $\eps \to 0$.
For this, we use the Nualart--Peccati fourth moment theorem \cite{FourthMoment} which implies that this 
is the case provided that the fourth joint cumulant of any such collection of variables converges to $0$ as
$\eps \to 0$. It follows from the diagram formula \cite[Thm~7.1.3]{PeccatiTaqqu} that the joint cumulant
$\E_c \big(\scal{\phi_1,\xi_\eps},\ldots,\scal{\phi_4,\xi_\eps}\big)$
is obtained by taking four copies of the figure associated to 
$\scal{\xi_\eps, \phi}$ in \eqref{e:graphics}
and summing over all ways of pairwise contracting the white noises, subject to the
following constraints:
\begin{itemize}
\item No two noises grouped into a Wick product can be contracted.
\item The resulting diagram is connected.
\end{itemize} 
Clearly, the only contraction satisfying these constraints is given by
\begin{equ}
\begin{tikzpicture}[scale=0.35,baseline=0cm]
	\node at (0,0)  [dot] (bl) {};
	\node at (3,0)  [dot] (br) {};
	\node at (3,3)  [dot] (tr) {};
	\node at (0,3)  [dot] (tl) {};
	\node at (-2.5,0)  [root] (rbl) {};
	\node at (-2.5,3)  [root] (rtl) {};
	\node at (5.5,0)  [root] (rbr) {};
	\node at (5.5,3)  [root] (rtr) {};

	\draw[testfcn] (bl) to  (rbl);	
	\draw[testfcn,darkblue] (br) to  (rbr);	
	\draw[testfcn,darkred] (tl) to  (rtl);	
	\draw[testfcn,orange] (tr) to  (rtr);	

	\draw[kernelBig] (bl) to (br);
	\draw[kernelBig] (br) to (tr);
	\draw[kernelBig] (tr) to (tl);
	\draw[kernelBig] (tl) to (bl);
\end{tikzpicture}\;,
\end{equ}
modulo permutations of the four test functions.
The fact that such a term converges to $0$ as $\eps \to 0$ is then a consequence of 
Weinstein's theorem \cite[Prop.~2.3]{Hai17}, noting that Lemma~\ref{lem:boundKeps} 
implies that, for $\kappa \in [0,3/2)$, the kernel $K_\eps$ is of degree 
$-3/2-\kappa$ and the corresponding norm is of order $\eps^\kappa$.
\end{proof}

\subsection{Regularity structure and renormalisation}

We now describe the regularity structure associated to \eqref{e:basich}
by the procedure of \cite{BHZ} (see also \cite[Sec.~15.2]{Book}) and we argue 
that the BPHZ lift of our noise only leads to constant counterterms (as opposed to additional counterterms of the type $C_\eps \d_x h$ which is the only
other kind of counterterm that can possibly appear as a consequence of the general result
of \cite{Renorm}).

In the language of \cite{BHZ}, the regularity structure $(\CT,\CG)$ in question is generated by
one integration operator $\CI$ of regularity $2$, two noises $\Xi$ (corresponding to $\xi_\eps$) and $H$
(corresponding to $\chi_\eps$) of 
respective regularities $-3/2-\kappa$ and $-3/4-\kappa$ for some positive 
$\kappa>0$, and the normal complete rule containing the nodes $H\CI'$ and $(\CI')^2$.
It is straightforward to verify that this rule is subcritical and also satisfies
\cite[Ass.~2.31]{SGap} (at least for $\kappa$ small enough and we henceforth fix such a value).
For the sake of intuition it helps to represent the canonical basis vectors of $\CT$
by trees with possible nodes given by
\begin{equ}[e:nodes]
\<N3> \simeq \Xi\;,\quad
\<N4> \simeq H\;,\quad
\<N5> \simeq H\CI'\;,\quad
\<N1> \simeq \CI'\;,\quad
\<N2> \simeq (\CI')^2\;.
\end{equ}
We will use the convention that $\CI'$ annihilates Taylor monomials, so that we 
allow no node of type $1 \simeq \<one>$ except for the symbol $1$ itself.
For example, we have
\begin{equ}[e:example]
\CI'(H\CI'(\Xi))\CI'(H) = \<ex>\;,
\end{equ}
which contains nodes of all the above types except the penultimate one and is of 
degree $3+2(-3/4-\kappa)-3/2-\kappa = -3\kappa$. 
The prescription of \cite{BHZ} also requires us
to include basis vectors in $\CT$ corresponding to the classical Taylor monomials $X^k$, 
as well as trees with additional monomials attached at their nodes. The latter however
can be completely disregarded if we restrict ourselves to solutions of a low enough
regularity $\gamma$ for which \eqref{e:basich} is well-posed in a space of modelled
distributions that are locally of class $\CD^\gamma$ ($\gamma = 3/2 + 2\kappa$ will do
provided that $\kappa$ is small enough).

To see what degrees can possibly appear, note that each node in \eqref{e:nodes}
has one incoming edge (at the bottom) and between zero and two outgoing edges.
Furthermore, we should think of each basis vector of $\CT$ as represented by a tree
which has one incoming edge (at the root) and no outgoing edge. As a consequence, each
symbol (except $1$) must contain at least one node of type \<N3> or \<N4> and, for each
additional such node, one node of type \<N2>. As a consequence, the possible degrees
that can be realised are 
\begin{equ}
N_{\<N1>}
+ 2N_{\<N2>}
- N_{\<xi>} \bigl(\f32+\kappa\bigr)
- N_{\<H>} \bigl(\f34+\kappa\bigr)
+ N_{\<N5>}\bigl(\f14-\kappa\bigr)
\;,\quad N_\tau \in \N\;,\quad N_{\<xi>} + N_{\<H>} = 1+N_{\<N2>}\;.
\end{equ}
Note that, using the second relation, we find that the degree of a tree $\tau$
is given by
\begin{equ}[e:degtau]
\deg\tau  = N_{\<N1>}
+ N_{\<xi>} \bigl(\f12-\kappa\bigr)
+ N_{\<H>} \bigl(\f54-\kappa\bigr)
+ N_{\<N5>}\bigl(\f14-\kappa\bigr)
-2\;.
\end{equ}
In the case of \eqref{e:example} for example one has $N_{\<xi>} = N_{\<H>} = N_{\<N5>} = N_{\<N2>} = 1$
and $N_{\<N1>}=0$.

Fix now once and for all a truncation $\hat K$ of the heat kernel, i.e.\ $\hat K \colon \R^2 \to \R$
is such that $\hat K = P$ in a ball of radius $\f12$ around the origin, $\hat K = 0$ outside
of a ball of radius $1$, and $\int \hat K(z)\,dz = 0$. We furthermore assume that $\hat K$
is symmetric under spatial reflections, namely that 
\begin{equ}
\hat K(t,-x) = \hat K(t,x)\;.
\end{equ}
We then write $Z^{(\eps)} = (\Pi^{(\eps)},\Gamma^{(\eps)})$ for the canonical lift of the noise 
$(\eta_\eps,\xi_\eps)$ into the regularity structure $\CT$. Recall that this is built
by first inductively defining a linear map $\PPi^{(\eps)}\colon \CT \to \CC^\infty$ by setting
$\PPi^{(\eps)} 1 = 1$, $\PPi^{(\eps)} \<xi> = \xi_\eps$, $\PPi^{(\eps)} \<H> = \eta_\eps$, and then
$\PPi^{(\eps)} \CI'(\tau) = \d_x \hat K \star \PPi^{(\eps)} \tau$,
$\PPi^{(\eps)} \bar\tau\tau = \PPi^{(\eps)} \bar \tau \cdot\PPi^{(\eps)} \tau$.
One then defines $Z^{(\eps)}$ by a suitable ``recentering'' procedure as in \cite[Def.~6.8]{BHZ}.

Note that each $\PPi^{(\eps)}\tau$ is a stationary process and we set
\begin{equ}[e:defCeps]
C^{(\eps)}_\tau = \E \PPi^{(\eps)}\tau(0)\;,
\end{equ}
see \cite[Eq.~6.24]{BHZ}. We then have the following elementary result.

\begin{proposition}\label{prop:zero}
If $\tau$ is such that either $N_H^\tau \eqdef N_{\<H>}+N_{\<N5>}$ is odd or 
$N_{\CI'}^\tau \eqdef N_{\<N1>} + 2N_{\<N2>}+N_{\<N5>}$ is odd, then
$C^{(\eps)}_\tau = 0$.
\end{proposition}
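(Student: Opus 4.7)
The plan is to deduce each of the two claims from a distributional symmetry of the driving white noise $\eta$.

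For the first condition, I would use the sign flip $\eta \mapsto -\eta$. The noise represented by $\<H>$ (whether read as $\eta_\eps$ or $\chi_\eps$) is linear in $\eta$, while $\xi_\eps = \chi_\eps^2 - C_\eps$ is its Wick square and so invariant. Since $\PPi^{(\eps)}$ is built multiplicatively via the deterministic linear operation $\CI'$, each occurrence of $\<H>$ or $\<N5>$ contributes a factor $-1$, giving $\PPi^{(\eps)}\tau \mapsto (-1)^{N_H^\tau}\PPi^{(\eps)}\tau$. Distributional invariance $\eta\overset{d}{=}-\eta$ then forces $C^{(\eps)}_\tau = (-1)^{N_H^\tau}C^{(\eps)}_\tau$, proving the first case.

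For the second condition, I would use the spatial reflection $\sigma(t,x) = (t,-x)$, assuming in addition that $\rho$ is symmetric in its spatial variable (which may be arranged without loss of generality for Theorem~\ref{theo:main:KPZ}, since its limit does not depend on the choice of mollifier). The assumed spatial symmetry of $\hat K$ makes $\d_x \hat K$ antisymmetric, so a change of variables in the convolution integral gives the identity $(\d_x\hat K\star f)\circ\sigma = -\d_x\hat K\star (f\circ\sigma)$, while the symmetry of $\rho$ gives $\eta_\eps[\eta]\circ\sigma = \eta_\eps[\eta\circ\sigma]$ and similarly for $\xi_\eps$. A short induction on the tree structure then yields
\begin{equ}
\PPi^{(\eps)}\tau[\eta]\circ\sigma = (-1)^{N_{\CI'}^\tau}\PPi^{(\eps)}\tau[\eta\circ\sigma]\;,
\end{equ}
the sign accounting for the total number of $\CI'$ applications in $\tau$, namely $N_{\<N1>} + 2N_{\<N2>} + N_{\<N5>}$. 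Evaluating at the fixed point $z=0$ of $\sigma$, taking expectations, and using $\eta\circ\sigma\overset{d}{=}\eta$ delivers $C^{(\eps)}_\tau = (-1)^{N_{\CI'}^\tau}C^{(\eps)}_\tau$, completing the second case.

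The argument is elementary. The only point requiring care is the bookkeeping in the induction for the second claim, where one must correctly attribute two $\CI'$ applications to each $\<N2>$ node and one to each $\<N1>$ and $\<N5>$ node, matching the definition of $N_{\CI'}^\tau$.
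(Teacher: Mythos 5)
Your proof is correct and follows essentially the same route as the paper: the paper phrases the first case in terms of membership in a sum of odd Wiener chaoses, which is logically equivalent to your sign-flip argument $\eta \mapsto -\eta$ (a random variable that is an odd function of a symmetric Gaussian has zero mean), and both treatments of the second case rest on the spatial reflection $\sigma(t,x)=(t,-x)$ together with the assumed symmetry of $\hat K$.

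Two remarks worth recording. First, you correctly track the \emph{total} number of $\CI'$-kernels $N_{\CI'}^\tau = N_{\<N1>}+2N_{\<N2>}+N_{\<N5>}$ in the induction; the paper's wording ``if $N_{\<N1>}$ is odd'' in the proof is an imprecision, since the parity that matters is that of $N_{\<N1>}+N_{\<N5>}$ (equivalently of $N_{\CI'}^\tau$), exactly as you compute, and a node of type $\<N5>=H\CI'$ does contribute one $\d_x\hat K$-convolution. Second, your additional hypothesis that $\rho$ be spatially symmetric is a genuine and non-cosmetic observation: the covariance $(t,x)\mapsto(\rho_\eps\star\tilde\rho_\eps)(t,x)$ (and likewise $K_\eps$) is automatically even under the full reflection $z\mapsto -z$, but \emph{not} automatically under $\sigma$ alone, and the latter invariance is what the paper's reflection argument tacitly uses. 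So the proof really does require $\rho_\eps\star\tilde\rho_\eps$ (hence, e.g., $\rho$) to be spatially even, and it is right to say so explicitly. The only caveat is in your ``without loss of generality'' remark: symmetrising $\rho$ changes the autocorrelation $K$ and hence in general the variance $\sigma^2$ of the limiting noise, so the two limiting equations need not literally coincide; the cleanest fix is simply to add the spatial symmetry of $\rho$ as a standing assumption rather than to argue it can be assumed for free.
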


\begin{proof}
Since $\xi_\eps$ is an element of the \textit{second} (homogeneous) Wiener chaos while
$\eta_\eps$ is an element of the first chaos, it follows that,
whenever  $N_{\<H>}$ is odd, $\Pi^{(\eps)}\tau$ is a random variable
belong to a sum of odd chaoses, so that its expectation vanishes.

If $N_{\<N1>}$ is odd then $C^{(\eps)}_\tau$ is given by the expectation of an integral
involving an odd number of instances of the kernel $\d_x \hat K$. Since the latter is odd
under spatial reflections but the covariances of $\xi_\eps$ and $\eta_\eps$ are even,
it follows from reflecting all integration variables that $C^{(\eps)}_\tau=-C^{(\eps)}_\tau$
as claimed.
\end{proof}

Note now that the actual renormalisation constants $\hat C^{(\eps)}_\tau$ appearing
in the BPHZ lift of the model (and therefore also multiplying the counterterms
associated to each tree $\tau$ of negative degree) are obtained by setting 
$\hat C^{(\eps)}_\tau = C^{(\eps)} \tilde \CA_- \tau$, where $C^{(\eps)}$ was
defined in \eqref{e:defCeps} and $\tilde \CA_-$ is the 
``negative twisted antipode'' given in \cite[Prop.~6.6]{BHZ}.
We only need to know that these are given by polynomial expressions in the 
$C^{(\eps)}_\tau$ that have the property that every monomial 
of the form $\prod C^{(\eps)}_{\tau_i}$ appearing in the definition of $\hat C^{(\eps)}_\tau$
satisfies $N_H^\tau = \sum_i N_H^{\tau_i}$ and $N_{\CI'}^\tau = \sum_i N_{\CI'}^{\tau_i}$
(and similarly for $N_{\<xi>}$).
This is an immediate consequence of the definition \cite[Eq.~6.25]{BHZ} 
and the defining properties of the ``twisted
antipode'' given in Equations~6.8--6.9 of that article. 
Since any sum of integers adding to an odd number must contain
at least one odd summand and since odd moments of Gaussian random variables vanish, this 
implies the following.

\begin{corollary}\label{cor:counterterm}
Under the assumption of Proposition~\ref{prop:zero} one also has $\hat C^{(\eps)}_\tau = 0$.\qed
\end{corollary}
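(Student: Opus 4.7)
The plan is to exploit the polynomial structure of $\hat C^{(\eps)}_\tau$ in the elementary constants $C^{(\eps)}_{\sigma}$ together with the additivity of the statistics $N_H$ and $N_{\CI'}$ across such products, reducing the claim to Proposition~\ref{prop:zero} applied factor by factor.

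Concretely, I would begin from the formula $\hat C^{(\eps)}_\tau = C^{(\eps)} \tilde\CA_- \tau$ recalled from \cite[Eq.~6.25, Prop.~6.6]{BHZ}, together with the key additivity fact stated in the paragraph preceding the corollary: every monomial $\prod_i C^{(\eps)}_{\tau_i}$ that appears in the expansion of $\hat C^{(\eps)}_\tau$ satisfies $N_H^\tau = \sum_i N_H^{\tau_i}$ and $N_{\CI'}^\tau = \sum_i N_{\CI'}^{\tau_i}$. This is the only nontrivial ingredient and is taken as given here.

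Suppose first that $N_H^\tau$ is odd. Fix any monomial $\prod_i C^{(\eps)}_{\tau_i}$ appearing in $\hat C^{(\eps)}_\tau$. Since the integers $N_H^{\tau_i}$ sum to an odd number, at least one of them, say $N_H^{\tau_{i_0}}$, must be odd. Proposition~\ref{prop:zero} then gives $C^{(\eps)}_{\tau_{i_0}} = 0$, so the whole monomial vanishes. As this holds for every monomial in the expansion, $\hat C^{(\eps)}_\tau = 0$. The argument in the case that $N_{\CI'}^\tau$ is odd is identical, using instead the additivity of $N_{\CI'}$ and the second half of Proposition~\ref{prop:zero}.

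There is no real obstacle to overcome: the substantive content has already been packaged into Proposition~\ref{prop:zero} (parity of chaos degree and of the number of reflection-odd kernels) and into the additivity property of the negative twisted antipode recalled above. The corollary is just a parity pigeonhole on top of those two facts, which is presumably why the authors mark it with \qed in the statement itself.
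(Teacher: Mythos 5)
Your proof is correct and follows essentially the same route as the paper: the corollary really is just the parity pigeonhole ($N_H^\tau = \sum_i N_H^{\tau_i}$ odd forces some $N_H^{\tau_{i_0}}$ odd, and likewise for $N_{\CI'}$) layered on top of Proposition~\ref{prop:zero} and the additivity of these statistics across monomials in the twisted-antipode expansion, which the paper likewise simply recalls from \cite{BHZ}.
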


Recall now that, when considering \eqref{e:basich} driven by the BPHZ lift of
$(\chi_\eps,\xi_\eps)$, each tree $\tau$ with $\deg\tau < 0$ leads to 
an additional counterterm in the right-hand side of the form 
$\hat C^{(\eps)}_{\tau} F_\tau(\d_x h)$. We now claim that, for every such $\tau$, one
has either $F_\tau(u) \propto u$ or $F_\tau(u) \propto 1$ and, furthermore, that 
one has $\hat C^{(\eps)}_{\tau}=0$ for those
$\tau$'s such that $F_\tau(u) \propto u$.

Indeed, one can see from \cite{Renorm} (see Equation~2.12 there for the definition of 
the counterterms $\Upsilon^F[\tau]$, as well as Equation~3.9 and Theorem~3.25 for a 
description of 
how this relates to the renormalised equation) 
that, given $\tau$, every node of $\tau$ of type
\<N4> or \<N1> leads to a factor $\d_xh$ while every other node leads to a constant factor,
so that $F_\tau(u)\propto u^{N_{\<H>}+N_{\<N1>}}$. If $\tau$ is a tree such that 
$N_{\<H>}+N_{\<N1>} \ge 2$,
then it follows from \eqref{e:degtau} combined with the fact that one has the constraint
$N_{\<xi>} + N_{\<H>} \ge 1$ that $\deg \tau \ge \f94-\kappa > 0$, so that counterterms 
of the type $(\d_xh)^2$ (and higher) are indeed ruled out.
The main claim of this section is the following.

\begin{proposition}
The BPHZ counterterm of \eqref{e:basich} is given by $\sum_{\tau\,:\, \deg \tau < 0} \hat C^{(\eps)}_\tau$.
\end{proposition}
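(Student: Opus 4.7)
The plan is to finish the argument laid out in the preceding paragraphs. Combining the general renormalisation formula of \cite{Renorm} with that discussion, the BPHZ counterterm in \eqref{e:basich} takes the form
\begin{equ}
	\sum_{\tau\,:\,\deg\tau<0} \hat C^{(\eps)}_\tau\, F_\tau(\d_x h)\;,
\end{equ}
where $F_\tau(u)\propto u^{N_{\<H>}+N_{\<N1>}}$. Trees with $N_{\<H>}+N_{\<N1>}\ge 2$ have already been excluded via \eqref{e:degtau}, and trees with $N_{\<H>}+N_{\<N1>}=0$ yield precisely the constant counterterms $\hat C^{(\eps)}_\tau$ that appear in the statement. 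What remains is to show that $\hat C^{(\eps)}_\tau=0$ for every tree with $\deg\tau<0$ and $N_{\<H>}+N_{\<N1>}=1$, so that no surviving counterterm is proportional to $\d_x h$.

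To do this, I would appeal to Corollary~\ref{cor:counterterm}, which reduces the task to verifying the parity hypothesis of Proposition~\ref{prop:zero} in two subcases. When $(N_{\<H>},N_{\<N1>})=(1,0)$, one has $N_H^\tau = 1+N_{\<N5>}$ and $N_{\CI'}^\tau = 2N_{\<N2>}+N_{\<N5>}$, and the parities of these two integers are opposite. When $(N_{\<H>},N_{\<N1>})=(0,1)$, one has $N_H^\tau = N_{\<N5>}$ and $N_{\CI'}^\tau = 1+2N_{\<N2>}+N_{\<N5>}$, which again have opposite parities. In both cases at least one of $N_H^\tau$ and $N_{\CI'}^\tau$ is odd, so Proposition~\ref{prop:zero} and hence Corollary~\ref{cor:counterterm} yield $\hat C^{(\eps)}_\tau=0$. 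Summing the surviving contributions then produces exactly $\sum_{\tau\,:\,\deg\tau<0} \hat C^{(\eps)}_\tau$.

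There is no serious obstacle here: all of the substantive input, namely the derivation of the renormalised equation in \cite{Renorm}, the vanishing of expectations associated with odd noise or odd-kernel parity (Proposition~\ref{prop:zero}), and its propagation through the twisted antipode (Corollary~\ref{cor:counterterm}), has already been carried out. The only remaining content is the short parity case-split above, which suffices to kill every counterterm linear in $\d_x h$.
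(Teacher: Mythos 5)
Your proof is correct and takes essentially the same approach as the paper: reduce to the case $N_{\<H>}+N_{\<N1>}=1$, then invoke Corollary~\ref{cor:counterterm} by checking the parity hypothesis of Proposition~\ref{prop:zero}. The only difference is cosmetic: the paper runs a proof by contradiction (assume $N_{\<H>}+N_{\<N5>}$ and $N_{\<N1>}+N_{\<N5>}$ both even and derive $N_{\<H>}+N_{\<N1>}\ge 2$), while you compute the two parities directly and observe they are always opposite, hence at least one is odd.
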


\begin{proof}
We only need to show that $\hat C^{(\eps)}_\tau = 0$ whenever $\tau$ is such that  
$N_{\<H>}+N_{\<N1>} = 1$ and $\deg \tau < 0$. Since we want to apply the criterion 
from Corollary~\ref{cor:counterterm}, it is enough to show that if $\tau$ is such that its conditions fail 
(i.e.\ $N_{\<H>}+N_{\<N5>}$ and $N_{\<N1>} + N_{\<N5>}$ are both even) and 
$N_{\<H>}+N_{\<N1>} = 1$ then $\deg\tau > 0$.

Let us first consider the case $N_{\<N1>} = 1$. This then implies that $N_{\<N5>}$ is odd
(and in particular at least $1$) so that $N_{\<H>}$ is also odd, which leads to a contradiction
with the constraint $N_{\<H>}+N_{\<N1>} = 1$.
If $N_{\<H>} = 1$, then the same argument again leads to a contradiction, thus completing the proof.
\end{proof}

\subsection{Convergence of the model}
\label{sec:convergenceKPZ}

In view of the results of the previous section and the results of 
\cite{Hai14,BHZ,Renorm}, it remains to show 
that, in the context of a suitable regularity structure associated to \eqref{e:basich}, 
the BPHZ lift of the noises $(\chi_\eps,\xi_\eps)$ converges to the BPHZ lift of 
$(0,\xi)$ with $\xi = \lim_{\eps \to 0}\xi_\eps$ a space-time white noise.

As in \cite{SGap}, we now introduce a family of modelled distributions that will describe 
the derivative of our model with respect to the noise and that will be
used to inductively bound the BPHZ lift of $(\chi_\eps,\xi_\eps)$. The only difference is
that the pair $(\chi_\eps,\xi_\eps)$ by itself doesn't appear to satisfy a spectral 
gap inequality. The underlying Gaussian noise $\eta$ however does of course, so this is 
what we will be exploiting. As a consequence, our setup is slightly different
from that of \cite{SGap}. In particular, we will work with $L^p$-based spaces for $p$ slightly 
smaller than $2$ instead of the $L^2$-based spaces used in \cite[Secs~4 \& 5]{SGap}. Since the 
analytic core of that article (Section~3.2) allows for arbitrary $p$, this does not
cause any difficulty.

Let now $\gamma_\tau$ and $\deg_2\tau = \deg \tau + \f32$ be defined
as in \cite[Sec.~4]{SGap}, and fix some $p<2$ (we will have to choose it very close 
to $2$ later on, for example $p = 2-10^{-3}$ will do).
Here, $\gamma_\tau$ represents the regularity (as opposed to homogeneity) of the 
function \slash distribution $\Pi_x\tau$.
We also recall from \cite[Sec.~4]{SGap} that one can introduce spaces of modelled
distributions $\CD_p^\gamma$ which are the natural $p$-Besov analogues of the usual 
spaces $\CD^\gamma$ (which would correspond to $p=\infty$). The spaces
that play the star role in that article are spaces $\CD_p^{\gamma,\nu;z}$ of ``pointed''
modelled distributions which one should think of as elements of $\CD_p^\gamma$ that 
furthermore vanish at order $\nu$ (in an $L^p$-integrated sense) around the point $z$.
The norms of these spaces are chosen precisely in such a way that they are weak enough to support
an inductive construction of (pointed) modelled distributions $H^{z,h}_{\tau;\eps}$ representing 
the variation
of $\Pi_x \tau$ under an infinitesimal shift of the underlying white noise $\eta$ in a 
Cameron--Martin direction $h$, but strong enough to yield near-optimal bounds on their reconstruction
that are compatible with the usual bounds on models. Note here that the quantity $\f32$ by which
$\deg_2\tau$ differs from $\deg\tau$ is precisely the amount of regularity that one
loses by Sobolev embedding when going back from $L^2$-based bounds to $L^\infty$-based bounds.

Given any $h \in L^2$, we then define 
a collection of pointed modelled distributions $H^{z,h}_{\tau;\eps} \in \CD_p^{\gamma_\tau,\deg_2\tau;z}$
for the canonical basis vectors
$\tau$ of $\CT$ inductively by setting
\begin{equ}[e:defStartInduction]
H^{z,h}_{X^k;\eps} = 0\;,\quad H^{z,h}_{H;\eps} = \eps^{3/4} (\d_x^2P \star \rho_\eps \star h) \,\one\;,\quad
H^{z,h}_{\Xi;\eps} = 0\;,
\end{equ}
as well as
\begin{equ}[e:induction]
H^{z,h}_{\tau\bar\tau;\eps} = H^{z,h}_{\tau;\eps}f^{z}_{\bar\tau} + f^{z}_{\tau}H^{z,h}_{\bar\tau;\eps}\;,\quad
H^{z,h}_{\CI\tau;\eps} = \CK^{x,p}_{\gamma_\tau,\deg_2\tau}H^{z,h}_{\tau;\eps}\;.
\end{equ}
(See \cite[Sec.~4]{SGap} for more details of a very similar construction.)
As in \cite{SGap}, the second identity of \eqref{e:induction} only makes sense a priori
for $\gamma_\tau > 0$. This fails precisely when $\tau = \Xi$ (and only then). 

In that case, we \textit{postulate} that the reconstruction of 
$H^{z,h}_{\Xi;\eps}$ is given by
\begin{equ}[e:defReconstr]
\CR H^{z,h}_{\Xi;\eps}
= 2\eps^{3/4}\chi_\eps (\d_x^2P \star \rho_\eps\star h)\;,
\end{equ}
and we define $H^{z,h}_{\CI\Xi;\eps}$ using \cite[Thm~3.21]{SGap}.
The following analogue to \cite[Lems~4.3 \& 4.5]{SGap} provides the starting point for our induction.

\begin{lemma}\label{lem:bound}
For any $\alpha < 3/4$ and $p \le 2$, we have $H^{z,h}_{H;\eps} \in \CD_p^{\alpha,\alpha;z}$
uniformly over $h \in L^2$ and there exist $\kappa > 0$ and $C > 0$ (depending on $\alpha$) 
such that
\begin{equ}
\sup_{h \in L^2} \$H^{z,h}_{H;\eps}\$_{p,\alpha,\alpha;z} \le C\eps^\kappa\;,\qquad \forall\eps \in (0,1]\;.
\end{equ}
Furthermore, the right-hand side of \eqref{e:defReconstr} is almost surely a candidate for the 
pointed reconstruction in the sense of
\cite[Def.~3.20]{SGap} of $\smash{H^{z,h}_{\Xi;\eps} = 0}$ viewed as an element of 
$\smash{\CD_p^{\gamma_\tau,\deg_2\tau;z}}$. Recalling the notation $C(f;B_z)$ appearing in that 
definition which provides a ``norm'' for such a pair $(f,\CR f)$, one has
\begin{equ}
\sup_{\eps \le 1} \E \Big(\sup_{\|h\| = 1}\sup_{z \in K} C(\smash{H^{z,h}_{\Xi;\eps}};B_z)\Big)^q < \infty\;,
\end{equ}
for all $q \ge 1$.
\end{lemma}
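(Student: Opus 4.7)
Both parts hinge on the convolution kernel $L_\eps := \eps^{3/4}\,\d_x^2 P \star \rho_\eps$, through which $\chi^h_\eps := \eps^{3/4}(\d_x^2 P \star \rho_\eps \star h) = L_\eps \star h$ (deterministic in $h$) and $\chi_\eps = L_\eps \star \eta$ (Gaussian, with covariance $K_\eps = L_\eps \star \tilde L_\eps$). By direct Fourier computation, $|\hat L_\eps(\xi)|^2 = \eps^{3/2}\frac{k^4}{\tau^2+k^4}|\hat\rho(\eps\xi)|^2$ is bounded by $\eps^{3/2}$ uniformly in $\xi=(\tau,k)$, with effective support at $|\xi|\lesssim \eps^{-1}$. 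Plancherel then gives $\|\chi^h_\eps\|_{L^2}\lesssim \eps^{3/4}\|h\|_{L^2}$ and, for each $k\ge 0$, $\|\partial^k\chi^h_\eps\|_{L^2}\lesssim \eps^{3/4-k}\|h\|_{L^2}$; equivalently, $\|\hat K_\eps\|_\infty\lesssim \eps^{3/2}$.

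For the first assertion, the pointed $\CD_p^{\alpha,\alpha;z}$ norm of $\chi^h_\eps\one$ (in the sense of \cite[Sec.~4]{SGap}) reduces, for $p\le 2$ and $\alpha<3/4$, to controlling appropriate $L^p$-averages of $\chi^h_\eps$ (and its polynomial expansion at $z$) over parabolic balls $B_\lambda(z)$ with $\lambda\in(0,1]$. On large scales $\lambda\gtrsim \eps$, Jensen's inequality applied to the $L^2$-bound produces $\|\chi^h_\eps\|_{L^p(B_\lambda)}\lesssim \lambda^{3/p-3/2}\eps^{3/4}\|h\|_{L^2}$; on small scales $\lambda\lesssim \eps$, the smoothness of $\chi^h_\eps$ at scale $\eps$ (via the $L^2$ derivative bound together with Sobolev embedding) takes over. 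Matching the two regimes at $\lambda\sim\eps$ and choosing $p$ sufficiently close to (but strictly less than) $2$ yields the claimed bound $C\eps^\kappa\|h\|_{L^2}$ with $\kappa>0$ shrinking as $\alpha\to 3/4$ and $p\to 2$.

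For the second assertion, since $H^{z,h}_{\Xi;\eps}=0$ as an abstract modelled distribution, the pointed reconstruction condition of \cite[Def.~3.20]{SGap} reduces to a direct scaling bound on the candidate $\CR H^{z,h}_{\Xi;\eps} = 2\chi_\eps\chi^h_\eps$. Because $\chi_\eps$ is Gaussian and $\chi^h_\eps$ deterministic, the pairing $\langle 2\chi_\eps\chi^h_\eps,\phi_z^\lambda\rangle$ is centred Gaussian with variance
\begin{equation*}
4\int K_\eps(w_1-w_2)\,\chi^h_\eps(w_1)\chi^h_\eps(w_2)\,\phi_z^\lambda(w_1)\phi_z^\lambda(w_2)\,dw_1\,dw_2\,,
\end{equation*}
which by Parseval and $\|\hat K_\eps\|_\infty\lesssim \eps^{3/2}$ is dominated by $\eps^{3/2}\|\chi^h_\eps\phi_z^\lambda\|_{L^2}^2$. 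A careful split at $\lambda\sim\eps$, using the $L^2$-bound on $\chi^h_\eps$ for $\lambda\gtrsim \eps$ and its pointwise smoothness at scale $\eps$ for $\lambda\lesssim \eps$, yields the $\lambda$-scaling compatible with $\deg_2\Xi=-\kappa$, with an $\eps^\kappa$ safety factor. Gaussian hypercontractivity (equivalence of moments within the first chaos) upgrades the second moment to any $L^q$ moment, and the uniform control over $z\in K$ and $\|h\|_{L^2}=1$ follows from the linearity of $\CR H^{z,h}_{\Xi;\eps}$ in $h$ (allowing a duality reduction to an $L^2$-dual statement) combined with Kolmogorov's continuity criterion in $z$. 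The main obstacle is precisely this last bookkeeping step: matching the $\lambda$- and $\eps$-scalings of the variance computation to the exact form of the pointed reconstruction norm of \cite{SGap}, and controlling the supremum over $\|h\|_{L^2}=1$ despite the potentially large pointwise values $\chi^h_\eps(z)\sim \eps^{-3/4}$ occurring for worst-case $h$.
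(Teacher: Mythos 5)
Your approach is genuinely different from the paper's, which delegates almost everything to abstract embedding results: the paper observes that $\eps^{3/4}(\d_x^2P\star\rho_\eps\star h) = \eps^{3/4}\rho_\eps\star(\d_x^2P\star h)$ with $\d_x^2P\star$ bounded on $L^2$, then invokes the uniform boundedness of $f\mapsto\eps^\alpha\rho_\eps\star f$ from $L^p$ to the local Besov space $\CB_p^\alpha$ \cite[Sec.~1.3]{SGap}, the canonical embedding $\CB_p^\alpha\hookrightarrow\CD_p^\alpha$ of \cite[Prop.~A.5]{BL21}, and the identification $\CD_p^\alpha=\CD_p^{\alpha,\alpha;z}$ for $\alpha<3/p$ à la \cite[Lem.~4.3]{SGap}. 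The power $\eps^\kappa$ with $\kappa=3/4-\alpha$ simply falls out of writing $\eps^{3/4}=\eps^{3/4-\alpha}\cdot\eps^\alpha$. For the second claim, the paper just notes that $\E\chi_\eps(z)^2\approx\eps^{-3/2}$ so that $\eps^{3/4}\chi_\eps$ is \emph{uniformly} in every local $L^p$ space with all moments, and then the product $\CR H^{z,h}_{\Xi;\eps}=2(\eps^{3/4}\chi_\eps)\cdot(\d_x^2P\star\rho_\eps\star h)$ is a product of a factor uniformly bounded in $L^p_{\mathrm{loc}}$ (all moments) and a factor uniformly bounded in $L^2$ over $\|h\|_{L^2}\le 1$, which plugs into the same Besov/pointed-space machinery.

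Your first claim is essentially sound if filled in: the $L^2$ bound $\|\chi^h_\eps\|_{L^2}\lesssim\eps^{3/4}\|h\|_{L^2}$ together with the derivative bounds and a split at $\lambda\sim\eps$ can reproduce the Besov-embedding estimate by hand, but you do need to explicitly include the subtraction of the relevant Taylor polynomial at $z$ in the small-$\lambda$ regime since $\CD_p^{\alpha,\alpha;z}$ is a polynomial-sector modelled-distribution space, not a raw $L^p$ space. Also, the claim that one must take $p$ ``sufficiently close to $2$'' is not needed for this lemma: the relevant constraint is $\alpha<3/p$, which for $p\le 2$ is weaker than $\alpha<3/4$; the choice of $p$ near $2$ is made elsewhere in the paper for the spectral-gap/Sobolev bookkeeping, not here.

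For the second claim there is a real gap, which you yourself flag but do not close. The difficulty you describe (large pointwise values $\chi^h_\eps(z)\sim\eps^{-3/4}$, matching $\lambda$- and $\eps$-scalings, controlling the sup over $h$) is an artefact of your choice to factorise the reconstruction as $2\chi_\eps\cdot\chi^h_\eps$, where \emph{both} factors have $\eps$-dependent sizes that only cancel after a delicate two-scale argument. The right factorisation is the one the paper uses: $2(\eps^{3/4}\chi_\eps)\cdot(\d_x^2P\star\rho_\eps\star h)$. The first factor is an $\eps$-independent-size Gaussian field (all local moments bounded uniformly), and the second is bounded in $L^2$ uniformly over $\|h\|_{L^2}=1$ (since the space-time symbol of $\d_x^2P\star$ is bounded). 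With this split there is nothing to match: the product lands uniformly in a local $L^q$ space for some $q$ close to $2$, which suffices for the pointed reconstruction bound with vanishing order $\deg_2\Xi=-\kappa$, and the supremum over $h$ is handled by linearity/duality without the ``worst-case $h$'' pathology you worry about. Your Parseval bound on the variance does give a finite estimate, but it does not by itself produce the $\lambda^{-\kappa}$-scaling demanded by \cite[Def.~3.20]{SGap}, and the concluding paragraph of your proposal essentially admits this. So the second part of the proposal is incomplete as written.
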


\begin{proof}
Regarding the first claim, it follows immediately from the definition of local Besov
spaces (in $1+1$-dimensional parabolic space-time) \cite[Sec.~1.3]{SGap} that maps of the type
$f \mapsto \eps^\alpha \rho_\eps \star f$ (with $\alpha \ge 0$) are 
uniformly bounded over $\eps \in (0,1]$ from $L^p$ into the local
Besov space $\CB_p^{\alpha}$.

Furthermore, as a consequence of \cite[Prop.~A.5]{BL21}, 
$\CB_p^{\alpha}$ embeds canonically into $\CD_p^{\alpha}$ and, provided 
that $\alpha < 3/p$, the space $\CD_p^{\alpha}$ (with values in the polynomial sector)
coincides with $\CD_p^{\alpha,\alpha;z}$. This latter statement can be shown in 
exactly the same way as \cite[Lem.~4.3]{SGap}.

The second claim follows in an analogous way once we note that 
$\E \chi_\eps^2(z) \approx \eps^{-3/2}$ so that $\eps^{3/4}\chi_\eps$ belongs to every
local $L^p$ space and admits moments of all orders. 
\end{proof}

At the technical level, the main result of this section is the following.

\begin{theorem}\label{theo:convLift}
Let $\hat Z^{(\eps)}$ denote the BPHZ lift of $(\chi_\eps,\xi_\eps)$. 
Then, one has the convergence in law $\hat Z^{(\eps)} \to Z$, where $Z$
denotes the BPHZ lift of  $(0,\xi)$ with $\xi$ as in Proposition~\ref{prop:convNoise}.
\end{theorem}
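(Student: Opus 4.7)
The plan is to apply the $L^p$-based spectral gap methodology of \cite[Secs~4--5]{SGap} and run an inductive argument over the trees of $\CT$. The pair $(\chi_\eps,\xi_\eps)$ itself does not admit a spectral gap inequality (since $\xi_\eps$ lives in the second homogeneous Wiener chaos), but the underlying Gaussian noise $\eta$ does, and the pointed modelled distributions $H^{z,h}_{\tau;\eps}$ from \eqref{e:defStartInduction}--\eqref{e:induction} are precisely tailored to encode the Malliavin derivatives of $\PPi^{(\eps)}\tau$ along a Cameron--Martin direction $h \in L^2$. The quantitative bounds to be established inductively are, for each $\tau$, uniform-in-$\eps$ estimates on $\sup_{\|h\|_{L^2}=1} \$H^{z,h}_{\tau;\eps}\$_{p,\gamma_\tau,\deg_2\tau;z}$, picking up a positive power of $\eps$ for every occurrence of $\<H>$ in $\tau$ (i.e.\ proportional to $\eps^{\kappa N_H^\tau}$ with $N_H^\tau = N_{\<H>}+N_{\<N5>}$). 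Combined with the spectral gap inequality for $\eta$ and the reconstruction results of \cite[Sec.~3]{SGap}, this yields the $L^p$ moment bounds on $\hat\PPi^{(\eps)}\tau(\phi_z^\lambda)$ required by the Kolmogorov-type criterion for tightness of $\hat Z^{(\eps)}$ in the space of admissible models.

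To identify the limit, I would split trees according to the value of $N_H^\tau$. When $N_H^\tau\ge 1$, the $\eps^\kappa$ smallness carried by each $\<H>$ occurrence, combined with the vanishing of many expectations via Proposition~\ref{prop:zero} and the control on surviving counterterms from Corollary~\ref{cor:counterterm}, forces the corresponding components of $\hat\PPi^{(\eps)}\tau$ to tend to $0$. This is consistent with the fact that the first noise of $(0,\xi)$ is zero, so these trees simply do not contribute to $Z$. When $N_H^\tau = 0$, the tree involves only $\Xi$ and classical Taylor monomials, so its BPHZ lift is a polynomial functional of $\xi_\eps$ alone; its convergence to the BPHZ lift of $\xi$ then follows directly from the stable convergence established in Proposition~\ref{prop:convNoise}, noting that stability provides joint convergence with $\eta$ which is what would be needed to pair across different trees.

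The main obstacle is propagating the modelled distribution bound through the induction at branches involving $\CI\Xi$, where the postulated reconstruction \eqref{e:defReconstr} pairs the factor $\chi_\eps$ (whose variance blows up like $\eps^{-3/2}$) with the small but rough factor $\eps^{3/4}\rho_\eps\star h$: verifying that the resulting pointed modelled distribution estimate for $H^{z,h}_{\CI\Xi;\eps}$ holds uniformly in $\eps$ and in $\|h\|_{L^2}=1$ requires a careful application of \cite[Thm~3.21]{SGap}, with $p$ close enough to $2$ that the Sobolev loss $\deg_2\tau-\deg\tau=3/2$ does not destroy subcriticality of any tree. A secondary difficulty is that individual renormalisation constants $\hat C^{(\eps)}_\tau$ for trees of only barely negative degree such as \eqref{e:example} need not converge in isolation; the argument must therefore track how the BPHZ subtraction interacts with the $\eps^{\kappa N_H^\tau}$ smallness, relying on the degree identity \eqref{e:degtau} and the constraint $N_{\<xi>}+N_{\<H>}=1+N_{\<N2>}$ to ensure that every counterterm-generating tree of negative degree either satisfies the hypothesis of Corollary~\ref{cor:counterterm} or carries enough factors of $\eps^\kappa$ from its $\<H>$ leaves to vanish in the limit.
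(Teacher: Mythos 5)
Your treatment of the uniform boundedness (tightness) step is essentially the same as the paper's: you propagate the pointed modelled-distribution bounds on $H^{z,h}_{\tau;\eps}$ inductively, using Lemma~\ref{lem:bound} as the base case, and combine the identity $D_h\Pi_z^{(\eps)}\tau = \CR H^{z,h}_{\tau;\eps}$ with the spectral gap inequality for the \emph{underlying} Gaussian $\eta$ to control moments of $\hat\PPi^{(\eps)}\tau$. That part is sound.

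The identification of the limit is where your argument has a genuine gap. You propose to split trees by $N_H^\tau$ and to claim that, for $N_H^\tau=0$, the component of $\hat Z^{(\eps)}$ is ``a polynomial functional of $\xi_\eps$ alone'' whose convergence ``follows directly from the stable convergence established in Proposition~\ref{prop:convNoise}''. This does not follow. Proposition~\ref{prop:convNoise} gives convergence of $\xi_\eps$ to $\xi$ as random distributions in $\CC^\alpha$ for $\alpha<-3/2$ (stably with respect to $\eta$), but the BPHZ lift is built from renormalised products and iterated integrals such as $\scal{\phi,(\CI'\Xi)^2} - C_\eps$, which are \emph{not} continuous functionals of $\xi_\eps$ in that topology --- indeed the whole point of renormalisation is that these maps fail to be continuous. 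Weak or stable convergence of the noise therefore does not by itself transfer to convergence of its BPHZ lift; this is a well-known pitfall and one must actively supply uniform-in-$\eps$ estimates. The paper handles this by introducing an auxiliary mollification scale $\delta>0$, setting $\xi_{\eps,\delta}=\rho_\delta\star\xi_\eps$ and $\chi_{\eps,\delta}=\rho_\delta\star\chi_\eps$: for fixed $\delta$ the map from the noise to the BPHZ lift of the $\delta$-mollified processes \emph{is} continuous (everything is smooth), so Proposition~\ref{prop:convNoise} gives $\lim_{\eps\to0}\hat Z^{(\eps,\delta)}$, and $\lim_{\delta\to0}$ of that is $Z$ by the spectral-gap continuity theorem applied to $\xi$. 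The limits are then exchanged by proving, again via the $H^{z,h}_{\tau;\eps,\delta}$ machinery, a bound on the model distance $\|\hat Z^{(\eps)};\hat Z^{(\eps,\delta)}\|$ of order $o(\delta)$ uniformly over $\eps\in(0,1]$. Your proposal is missing this diagonal/commutation step, and without it the argument for the $N_H^\tau=0$ trees is not complete.

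A secondary remark: the bound you aim for, $\$H^{z,h}_{\tau;\eps}\$\lesssim \eps^{\kappa N_H^\tau}$, is a natural guess, but the paper does not track explicit powers of $\eps$ through the induction; the inductive estimate \eqref{e:boundUnifH} is of the form $\lesssim (1+\|\hat Z^{(\eps)}\|)^k$ and the $\eps$-smallness only enters via the base case Lemma~\ref{lem:bound}. Propagating explicit $\eps^{\kappa N_H^\tau}$ factors through the fixed-point steps of \cite[Sec.~3]{SGap} is not needed for the result and is not obviously clean given the interplay with the (possibly $\eps$-uniformly large) model norm appearing on the right-hand side.
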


\begin{proof}
We first show that $\hat Z^{(\eps)}$ is bounded uniformly over $\eps \in (0,1]$.
The proof by induction is virtually identical to that of \cite[Thm~2.33]{SGap}, the only difference being the starting point for our induction. 
The bounds of Lemma~\ref{lem:bound} provide the starting point for an inductive proof that,
for every $\tau \in \CT$ there exists $k \ge 1$ and, for every $z \in \R^2$, there
exists a compact set $\K$ containing $z$ such that
\begin{equ}[e:boundUnifH]
\sup_{\|h\| \le 1}\sup_{\eps \le 1}
 \$H^{z,h}_{\tau;\eps}\$_{p,\gamma_\tau,\deg_2\tau;z} 
 \lesssim (1+\|\hat Z^{(\eps)}\|_{\CT_\tau;\K})^k\;.
\end{equ}
Here, $\gamma_\tau$ and $\deg_2\tau$ are defined as in \cite[Sec.~4]{SGap} and
$\CT_\tau$ denotes the smallest sector of $\CT$ such that $\CT_\tau + \Vec \tau$
is also a sector.
The proof of this statement is identical to that of \cite[Prop.~4.7]{SGap}.
In fact, it is easier since $\tau = \Xi$ is the only element with $\gamma_\tau \le 0$,
so that we never need to use \cite[Lem.~4.6]{SGap} which was one of the more delicate points
in that proof.

The definitions
\eqref{e:defStartInduction} and \eqref{e:defReconstr} guarantee that one has 
the identities
\begin{equ}
D_h \Pi_z^{(\eps)}\Xi = \CR H_{\Xi;\eps}^{z,h}\;,\qquad
D_h \Pi_z^{(\eps)}H = \CR H_{H;\eps}^{z,h}\;,
\end{equ}
where $D$ denotes the Fréchet derivative with respect to the underlying space-time
white noise $\eta$ in the direction $h \in L^2$. This again provides the starting point for 
the inductive proof that the identity $D_h \Pi_z^{(\eps)}\tau = \CR H_{\tau;\eps}^{z,h}$
holds for every $\tau \in \CT$, the proof being identical to that of 
\cite[Thm~4.18]{SGap}. Combining these two facts with the Gaussian spectral gap 
inequality then yields the uniform bound on  $\hat Z^{(\eps)}$ exactly as in
\cite[Thm~2.33]{SGap}.

Regarding the convergence as $\eps \to 0$, we again argue in a way similar 
to \cite{SGap}, but with significant simplifications and one additional 
layer of approximation. For $\delta > 0$, we define
\begin{equ}
\xi_{\eps,\delta} = \rho_\delta \star \xi_\eps\;,\qquad
\chi_{\eps,\delta} = \rho_\delta \star \chi_\eps\;,
\end{equ}
and we denote by $\hat Z^{(\eps,\delta)}$ the BPHZ lift of these processes.
As a consequence of Proposition~\ref{prop:convNoise}, of \cite[Thm~2.33]{SGap} (using the fact
that $\xi$ satisfies a spectral gap inequality in $L^2$)
and of the fact that
all these processes are smooth, we know that one has the limits in probability
(in the topology of convergence of models on compact sets introduced in \cite[Eq.~2.17]{Hai14})
\begin{equ}
\lim_{\delta \to 0}\lim_{\eps \to 0}\hat Z^{(\eps,\delta)} = Z\;,\qquad
\lim_{\delta \to 0}\hat Z^{(\eps,\delta)} = \hat Z^{(\eps)}\;,
\end{equ}
so our aim is to exchange the limits in the first bound. This is immediate if we can 
get a bound on the distance between $\hat Z^{(\eps,\delta)}$ and $\hat Z^{(\eps)}$
that goes to $0$ as $\delta\to 0$, uniformly over $\eps \in (0,1]$. The proof that this is 
the case is virtually identical to that of \cite[Thm~6.9]{SGap}, the starting point
of the argument being provided by the fact that if we set
\begin{equ}
H^{z,h}_{H;\eps,\delta} = \eps^{3/4} (\rho_\delta \star \d_x^2P \star \rho_\eps \star h) \,\one\;,
\end{equ}
as well as
\begin{equ}
H^{z,h}_{\Xi;\eps,\delta} = 0\;,\qquad
\CR H^{z,h}_{\Xi;\eps,\delta} = 2\eps^{3/4} \rho_\delta \star \big(\chi_\eps (\d_x^2P \star \rho_\eps\star h)\big)\;,
\end{equ}
then both $\sup_{h \in L^2} \$H^{z,h}_{H;\eps}-H^{z,h}_{H;\eps,\delta}\$_{p,\alpha,\alpha;z}$
and the norm of $\CR H^{z,h}_{\Xi;\eps}-\CR H^{z,h}_{\Xi;\eps,\delta}$ converge to $0$
as $\delta\to 0$,
uniformly over $\eps \in (0,1]$. One then defines $H^{z,h}_{\tau;\eps,\delta}$
in the same way as $H^{z,h}_{\tau;\eps}$, but with all manipulations done with respect
to the model given by the BPHZ lift of $(\chi_{\eps,\delta},\xi_{\eps,\delta})$.
By repeatedly applying \cite[Thms~3.11, 3.19, 3.21]{SGap}, it is then straightforward 
to obtain the bound
\begin{equ}
\$H^{z,h}_{\tau;\eps};H^{z,h}_{\tau;\eps,\delta}\$_{p,\gamma_\tau,\deg_2\tau;z}
\lesssim o(\delta) (1+\|\hat Z^{(\eps)}\|_{\CT_\tau;\K}+\|\hat Z^{(\eps)};\hat Z^{(\eps,\delta)}\|_{\CT_\tau;\K})^k\;,
\end{equ}
with quantifiers over $h$, $\tau$, $z$, etc as in \eqref{e:boundUnifH}, and $o$ some function
with $\lim_{\delta \to 0}o(\delta) = 0$.
The reason why these theorems are sufficient in our case is again that 
 $\gamma_\tau > 0$ for all $\tau \neq \Xi$, so we never have coefficients of negative
 regularity appearing in our modelled distributions. 
\end{proof}

We now have all the ingredients in place to provide a proof of the first main theorem of this
article.

\begin{proof}[Proof of Theorem~\ref{theo:main:KPZ}]
Recall the 
construction in \cite[Sec.~1.5.1]{YangMills} which, given an unbounded complete separable
metric space $X$ with a distinguished ``origin'' $o \in X$, constructs a space
$X^\sol$ which essentially consists of continuous functions $\R_+ \to \hat X$, where $\hat X$ is obtained by adjoining to $X$ a point at infinity, such that once the function blows up it cannot
be ``reborn''. The space $X^\sol$ is itself a complete separable metric space
and, if $f$ blows up at some time $\tau>0$, then any sequence of functions $g_\eps$
such that $d(f(t),g_\eps(t)) \le \eps$ for $t \le \tau - \eps$ converges to $f$ in $X^\sol$.
On the other hand, if $f \in \CC(\R_+,X)$, then $d(g_\eps, f) \to 0$ if and only if
the functions $g_\eps$ converge to $f$ uniformly over compact time intervals. 

The main result of \cite{Hai13,Hai14,Renorm} is that, for every $\alpha < 1/2-\kappa$, there 
exists a jointly continuous map 
\begin{equ}
\CS \colon \CC^\alpha \times \CM_0 \to (\CC^\alpha)^\sol\;,
\end{equ}
such that, writing as above $\hat Z^{(\eps)}$ for the BPHZ lift of $(\chi_\eps,\xi_\eps)$, 
$\CS(h_0,\hat Z^{(\eps)})$ coincides with the solution to
\begin{equ}[e:basichRenorm]
\d_t h = \d_x^2 h +  (\d_x h)^2 + 2\chi_\eps \d_x h + \xi_\eps - C_\eps  \;,
\end{equ}
with initial condition $h_0$, for a suitable constant $C_\eps$.
Furthermore, if $Z$ denotes the BPHZ lift of $(0,\xi)$, then 
$\CS(h_0,Z)$ is known by \cite[Rem.~1.8]{BGHZ22} to coincide with 
the Cole--Hopf solution to the KPZ equation, provided that one possibly changes the
value of $C_\eps$ by a fixed quantity of order one.

The claim then follows from Theorem~\ref{theo:convLift}, combined with the fact 
that solutions to the KPZ equation are global in time.
\end{proof}

\section{Fractional Brownian motion}

We now turn to the proof of convergence of SDEs driven by mollified 
fractional Brownian motions to a Markov process diffusing in the direction of
the Lie brackets of the vector fields defining the original equation.

%
%\begin{lemma}
%Let $f_i \in L^1_\loc(\R)$, $\delta_i > 0$, and $a_i \in \R$ with $a_1 + a_2 \neq 1$ 
%be such that $|f_i(t)| \le \delta_i |t|^{-a_i}$, 
%let $T \in (0,1]$, let $[s,t] \cup [u,v] \subset [0,T]$, and write
%\begin{equ}
%F(s,t) = \int_u^v f_1(t-r)f_2(r-s)\,dr\;.
%\end{equ}
%Then, if $a_i < 1$, there exists a constant $C$ such that
%\begin{equ}
%|F(s,t)| \le C \delta_1\delta_2 \big(|t-s|^{1-a_1-a_2} + T^{1-a_1-a_2}\big)\;.
%\end{equ}
%If $a_1 = 1$ and $a_2 < 1$, then 
%\begin{equ}
%|F(s,t)| \le C \delta_2 \big((\|f_1\|+\delta_1)|t-s|^{-a_2} + \delta_1 T^{-a_2}\big)\;.
%\end{equ}
%where $\|f\| = \int_{-1}^1|f(r)|\,dr$.
%\end{lemma}
%
%\begin{proof}
%Writing $|F(s,t)| \le \int_0^T |f_1(t-r)f_2(r-s)|\,dr$, we break the integral into three
%regions: $|r-t| \le |t-s|/2$, $|r-s| \le |t-s|/2$, and $|r-(s+t/2)| \ge |t-s|$.
%The integral $I_1$ over the first region is bounded by
%\begin{equ}
%I_1 \lesssim \delta_2 |t-s|^{-a_2} \int_{-|t-s|}^{|t-s|} |f_1(r/2)|\,dr\;,
%\end{equ}
%and similarly for $I_2$ with the indices $1$ and $2$ exchanged. The remainder $I_3$ is itself bounded by
%\begin{equ}
%I_3 \lesssim \delta_1\delta_2 \int_{|t-s|}^{2T} r^{-a_1-a_2}\,dr
%\lesssim \delta_1\delta_2 \bigl(|t-s|^{1-a_1-a_2} + T^{1-a_1-a_2}\bigr)\;,
%\end{equ}
%provided that $a_1 + a_2 \neq 1$. It is then straightforward to combine these 
%bounds in order to deduce the claim.
%\end{proof}

Let $\rho$ be a fixed mollifier (smooth, with support in $[-1,1]$, integrating to $1$),
write $\rho_\eps(t) = \eps^{-1}\rho(t/\eps)$, and set
$B_\eps = C_\eps B \star \rho_\eps$ where $B$ denotes a two-sided $m$-dimensional fractional Brownian motion 
with Hurst parameter $H \in (0,\f14]$ and $C_\eps$ is as in Theorem~\ref{thm:mainfbm}.
We furthermore write $\xi_\eps = \dot B_\eps$ and we lift it to a geometric rough path
in the usual way \cite{Book} by setting
\begin{equ}[e:defLift]
\X^{(k)}_\eps (s,t) = \int_s^t \int_s^{r_k}\cdots \int_s^{r_2} \xi_\eps(r_1)\otimes \ldots \otimes \xi_\eps(r_k)\,dr_1\ldots dr_k\;.
\end{equ}
Our proof relies on showing that $\X^{(k)}_\eps$ converges to $0$ for $k$ odd, while
$\X^{(2n)}_\eps$ converges to the $n$-fold iterated integral of an $m^2$-dimensional 
Wiener process $W$ satisfying the antisymmetry constraint $W_{ij} = -W_{ji}$.
Since we work in the space of $\alpha$-Hölder rough paths for some $\alpha \in (\f15,\f14)$,
it is in fact sufficient to show that the first $4$ iterated integrals converge in the corresponding topology,
since convergence of iterated integrals of higher order then follows at once.

Even though the situation is quite similar to that of the KPZ equation treated in the previous 
section, it turns out that the proof is significantly more involved. This is mainly
a consequence of the fact that, in the language of regularity structures, the 
symbols $\Xi_i \CI(\Xi_j)$ which will end up representing the (time derivative of the) 
Wiener processes $W_{ij}$ in the limit transform non-trivially under the action of the structure group.
This is unlike the case of the KPZ equation where the second-order process $\xi_\eps$ 
can simply be represented by a single noise symbol which transforms trivially.

Our proof relies on delicate estimates on mixed moments of the quantities $\X^{(k)}_\eps (s,t)$.
Since the integration variables $r_i$ appearing in the expression \eqref{e:defLift} are ordered
like $s \le r_1 \le\ldots\le r_k \le t$, partially ordered sets (posets) will be
a convenient structure used to index our integration variables.

\subsection{Integrations indexed by posets}

In order to express and estimate moments of \eqref{e:defLift}, the following formalism will prove to be useful.
Given a finite poset $P$, denote $P^\top$ its maximal elements, $P^\bot$ its minimal elements,
and $P^\circ = P\setminus (P^\top \cup P^\bot)$.
Given functions $s\colon P^\bot \to \R$ and $t \colon P^\top \to \R$, we then write
$[s,t]_P$ for the set of all monotone functions $r \colon P \to \R$ such that 
$r\restr P^\bot = s$ and $r\restr P^\top = t$. 
%(When convenient, we also identify
%a real value $s \in \R$ with the corresponding constant function $s\colon P^\bot \to \R$
%without further mention, and similarly for $t$.)
 We say that
$P$ is ``linear'' if every $x \in P^\circ$
has a unique immediate predecessor $x^\down$ and a unique immediate successor $x^\up$.
Note that, given linear posets $P_1$ and $P_2$, their disjoint sum $P_1 \sqcup P_2$ 
is again linear.

Given a finite set $V$, we write $\CP(V)$
for the set of pairings of $V$, namely the set of partitions $E$ of $V$ such that every
element of $E$ contains exactly two elements of $V$. (Note that for $\CP(V)$ to be
non-empty $V$ needs to have an even number of elements.)
Given a map $r \colon V \to \R$ and a pairing $G \in \CP(V)$, we then set
\begin{equ}[e:defKepsG]
K_\eps^{(V,G)}(r) = \prod_{\{e,f\} \in G} K_\eps(r_f-r_e)\;,
\end{equ}
where we set $K_\eps(t) = \E \xi_\eps(0)\xi_\eps(t)$ and note that this
expression is well-defined since $K_\eps$ is even.

With these notations in place, we can write down an expression for the joint moments
of the components of $\X^{(k)}_\eps$. This is already hinted at by noting that the 
integral in \eqref{e:defLift} runs over $[s,t]_{P[k]}$, where $P[k] = \{0,\ldots,k+1\}$
endowed with its natural order. Indeed, fix $n \ge 1$, lengths $k_1,\ldots,k_n$,
 multiindices $I_i \in \{1,\ldots,m\}^{k_i}$, intervals $[s_i,t_i]$, and fix the shorthand
 $\X_{\eps}[i] = \scal{\X_{\eps}^{(k_i)}(s_i,t_i),e_{I_i}}$ with $e_{I_i}$
 the corresponding basis vector of $(\R^m)^{\otimes k_i}$. We then set
 $P = P[k_1]\sqcup\dots\sqcup P[k_n]$ and identify $s$ and $t$ with the corresponding
constant functions on $P^\bot$ and $P^\top$ respectively. By Wick's formula, there then exists a
 subset $\CP^I \subset \CP(P^\circ)$ of the pairings of $P^\circ$ such that 
\begin{equ}[e:moments]
\E\Big( \prod_{i=1}^n \X_{\eps}[i]\Big) = \sum_{G \in \CP^I} \int_{[s,t]_P} K_\eps^{(P^\circ,G)}(r\restr P^\circ)\,dr\;.
\end{equ}
When trying to estimate such expressions, one annoying feature is that 
$\int \abs{K_\eps(t)}\,dt$ diverges as $\eps \to 0$. As a consequence, we cannot 
simply replace the integrand by its absolute value but have to exploit cancellations. 
For this, our main tool is a systematic way of rewriting the right-hand
side of \eqref{e:moments} in terms of the kernel $\bar K_\eps$ given by $\bar K_\eps(t) = \int_0^t K_\eps(r)\,dr$.

Given a finite set $V$ with a distinguished subset $V^\circ \subset V$, 
write $\CG_0(V)$ for the set of all pairs $(G,E)$ such that
 $G$ is a collection of 
$2$-element subsets of $V^\circ$ (undirected edges) and $E \subset V\times V$ (directed edges).
We write $\CG(V) \subset \CG_0(V)$ for those pairs $(G,E)$ such that
 their local structure around any vertex $v \in V^\circ$ is furthermore of one
of the following three types:
\begin{enumerate}
\item There is exactly one $g \in G$ with $v \in g$ and no edge of $E$ touching $v$.
\item There is exactly one edge $e \in E$ pointing out of $v$, i.e.\ of the 
type $(v,v')$ for some $v' \in V$, and no edge in $G$ containing $v$.
\item There is exactly one edge $e \in E$ pointing into $v$, no edge in $E$ pointing
out of $v$, and exactly one edge in $G$ containing $v$.
\end{enumerate}
Furthermore, we impose that there is at most one vertex of type~3
and that vertices in $V \setminus V^\circ$ only have edges in $E$ pointing into them.
If we draw undirected edges in $G$ as dark blue lines and directed edges in $E$ as light blue arrows,
examples of vertices of these types are as follows:
\begin{equ}
\text{type 1: }
\begin{tikzpicture}[baseline=-.08cm]
\node[dot] (1) at (0,0) {};

\draw[very thick,darkblue] (1) to ++(0.8,0);
\end{tikzpicture}
\;,\quad\text{type 2: }
\begin{tikzpicture}[baseline=-.08cm]
\node[dot] (1) at (0,0) {};

\draw[thick,lightblue,<-] (1) to ++(-0.7,0.3);
\draw[thick,lightblue,<-] (1) to ++(-0.7,-0.3);
\draw[thick,lightblue,->] (1) to ++(0.8,0);
\end{tikzpicture}
\;,\quad\text{type 3: }
\begin{tikzpicture}[baseline=-.08cm]
\node[dot] (1) at (0,0) {};

\draw[thick,lightblue,<-] (1) to ++(-0.8,0);
\draw[very thick,darkblue] (1) to ++(0.8,0);
\end{tikzpicture}\;.
\end{equ}
The following is an immediate consequence of this structure.

\begin{lemma}\label{lem:components}
Let $(G,E) \in \CG(V)$.
Each connected component of the directed graph $(V,E)$ is of one of the following three types:
\begin{enumerate}[label=(\alph*)]
\item\label{tpe:tree} A tree directed towards 
its root with the root belonging to $V \setminus V^\circ$ and all other vertices belonging to $V^\circ$.
\item\label{tpe:cycle} The union of a directed cycle $C$ 
of at least two vertices belonging to $V^\circ$, together with finitely many trees directed 
towards their roots belonging to $C$.
\item\label{tpe:line} A tree in $V^\circ$ directed towards 
its root which is the unique vertex of type~3 above.
\end{enumerate}
\end{lemma}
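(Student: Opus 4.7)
The plan is to exploit the rigid local structure of $(V,E)$ at each vertex of $V^\circ$. First I would note that the out-degree in $(V,E)$ is exactly $1$ for type~2 vertices and $0$ for types~1 and~3, while vertices in $V \setminus V^\circ$ also have out-degree $0$. Moreover type~1 vertices have in-degree $0$ as well, so they are isolated in $(V,E)$ and implicitly fall outside the three enumerated cases. Every non-isolated vertex that is not of type~2 is therefore a \emph{sink}.

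Next I would observe that the restriction of $E$ to type~2 vertices is the graph of a well-defined function $f$ sending each type~2 vertex to its unique out-neighbour. Iterating $f$ from any starting point, finiteness of $V$ forces the forward orbit to either hit a sink or revisit a previously seen vertex, creating a cycle made exclusively of type~2 vertices (since sinks have out-degree $0$ and therefore cannot lie on a cycle). Crucially, a sink reached in this way cannot be of type~1 (which has in-degree $0$), so it must be either of type~3 or lie in $V \setminus V^\circ$. This assigns to every non-isolated vertex an \emph{ultimate destination} $D(v)$, either a sink or a cycle.

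The central step is then to show that two non-isolated vertices lie in the same weakly connected component of $(V,E)$ if and only if their destinations coincide. If $(u,w)\in E$ then $u$ must be of type~2 with $f(u)=w$, so $D(u)=D(w)$; propagating along undirected paths gives one direction. The converse is immediate since every vertex is directedly connected to its destination via its forward orbit.

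The three cases then follow by a trichotomy on $D$. A sink $r\in V \setminus V^\circ$ gives case~\ref{tpe:tree}: the component is a directed tree of type~2 vertices rooted at $r$. The unique type~3 vertex gives case~\ref{tpe:line}, which by the ``at most one'' hypothesis can occur in at most one component. A cycle of type~2 vertices, together with the trees of type~2 vertices flowing into it from outside, gives case~\ref{tpe:cycle}; the cycle has at least two vertices modulo the convention that $E$ contains no self-loops. The main book-keeping subtlety is the handling of isolated type~1 vertices and the self-loop convention at the edge of the definitions, but beyond that this is the standard structure theorem for functional (pseudo-forest) graphs.
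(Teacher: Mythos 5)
Your argument is correct and follows the same route as the paper's proof: out-degree in $(V,E)$ is at most one, so forward orbits terminate at a sink or enter a cycle, and weak connectivity forces all vertices in a component to share the same limit set (the paper phrases this via "basins of attraction must intersect," which is precisely your observation that $D$ is constant along each edge of $E$). Your two flagged edge cases --- isolated type~1 vertices, which form singleton components fitting none of (a)--(c), and the possibility of a one-vertex cycle from a self-loop --- are genuine blemishes in the lemma's statement that the paper's terse proof also glosses over; neither matters for the application since self-edges are quotiented out in $\CG^\Z(P)$ and an isolated type~1 vertex plays no role in the structure of $(V,E)$.
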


\begin{proof}
Since every vertex can have at most one outgoing edge in $E$ we can explore the graph $(V,E)$
by following these edges. Since $V$ is finite, we must eventually reach a directed cycle or a
vertex without outgoing edge. The latter must be either of type~2 or of type~3, thus accounting
for the three types of connected components. Note that a component cannot contain more than
one such limit set since their basins of attraction must intersect (by connectedness) which 
would force such an intersection point to have at least two outgoing edges which is not allowed.
\end{proof}

We then set, similarly to \eqref{e:defKepsG},
\begin{equ}
K_\eps^{(V,G,E)}(r) = \prod_{\{e,f\} \in G} K_\eps(r_f-r_e)\prod_{(e,f) \in E} \bar K_\eps(r_f-r_e)\;.
\end{equ}
Note that we require edges in $E$ to be directed since $\bar K_\eps$ is odd.
Consider now the case when the vertex set $V$ is given by a linear poset $P$ as above.
It will then be convenient to consider the $\Z$-module $\CG_0^\Z(P)$ generated
by $\bigcup_{\hat P \subset P} \CG_0(\hat P)$ where the union runs over 
sub-posets such that $\hat P^\top = P^\top$
and $\hat P^\bot = P^\bot$.
We define $\CG^\Z(P)$ analogously, except that we also quotient out the submodule spanned by graphs 
$(\hat P,G,E) \in \CG(P)$ such that $E$ contains a self-edge (i.e.\ an edge of the type $e = (v,v)$).

Given $s,t$ as above, we define the $\Z$-linear 
valuation $J_\eps(s,t) \colon \CG^\Z(P) \to \R$ such that 
\begin{equ}[e:defJeps]
J_\eps(s,t)(\hat P, G,E) = \int_{[s,t]_{\hat P}} K_\eps^{(\hat P,G,E)}(r)\,dr\;.
\end{equ}
This is well-defined since $K_\eps^{(\hat P,G,E)}$ vanishes if $E$ contains a self-edge as a
consequence of the fact that $\bar K_\eps(0) = 0$.
It will also be convenient to fix a total order on $P$. (In order to avoid confusion,
 choose it compatible with the given partial order. Our construction will depend on this
 total order, but the resulting bounds will not. Note also that the notations $v^\uparrow$ and $v^\downarrow$ introduced
 at the start of this section are derived from the partial order on $P$ and have no relation with the 
 total order we fixed for convenience.)
 
We then define a linear map $\CI \colon \CG^\Z(P) \to \CG_0^\Z(P)$\label{p:defI} in the following way.
If $G$ is empty, then $\CI(\hat P,G,E) = (\hat P,G,E)$. If not, we define $v^*$ as the
unique vertex of type~3 if such a vertex exists
and as the smallest (with respect to the fixed total order) vertex of type~1 otherwise. 
Either way, write $v_* \in \hat P^\circ$ for the unique 
element such that $g_* = \{v^*,v_*\} \in G$ which is then necessarily of type~1. 
We then set $\hat P' = \hat P \setminus \{v_*\}$, $G' = G \setminus \{g_*\}$, 
$E^\up = E \cup \{(v^*, v_*^\up)\}$, $E^\down = E \cup \{(v^*, v_*^\down)\}$,
as well as $g^\up = (\hat P',G',E^\up)$, and similarly for $g^\down$. With these
definitions, $\CI$ is given by
\begin{equ}[e:defCI]
\CI(\hat P,G,E) = g^\up - g^\down\;.
\end{equ}
This is indeed well-defined in the sense that $E^\up \subset \hat P'\times \hat P'$
and $G'$ consists of disjoint undirected edges of $(\hat P')^\circ$. 
In fact, $\CI$ is simply an encoding of the integration by parts formula as follows.

\begin{proposition}\label{prop:Integration}
The map $\CI$ maps $\CG^\Z(P)$ to $\CG^\Z(P)$ and one has the identity
\begin{equ}[e:integration]
J_\eps(s,t)(\CI g) = J_\eps(s,t)(g)\;,
\end{equ}
for every $g \in \CG^\Z(P)$. Furthermore, there exists
$N > 0$ such that $\CI^N = \CI^{N+1}$, which we then call $\CI^\infty$.
\end{proposition}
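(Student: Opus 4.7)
I would address the three assertions in turn, with the bulk of the work in the first.

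For the claim that $\CI$ maps $\CG^\Z(P)$ to itself, the plan is to perform a case analysis on whether $v^*$ is chosen as the unique type-3 vertex or as the smallest type-1 vertex. In either case $v^*$ loses its unique G-edge $g_*$ and gains one outgoing E-edge, hence becomes type 2 in the output (any pre-existing incoming E-edge at $v^*$ remains). The target $w$ of the new directed edge, namely $v_*^\up$ or $v_*^\down$, gains one incoming E-edge. If $w \in \hat P \setminus \hat P^\circ$ this is allowed by definition, while if $w \in \hat P^\circ$ its type either goes from 1 to 3 or remains at 2 (it cannot have been type 3, since $v^*$ was the unique type-3 vertex when one existed). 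This preserves the ``at most one vertex of type 3'' constraint. In the degenerate cases $v^* = v_*^\up$ or $v^* = v_*^\down$ a self-edge appears, so the corresponding summand simply vanishes in the quotient $\CG^\Z(P)$.

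For the identity $J_\eps(s,t)(\CI g) = J_\eps(s,t)(g)$, the key point is that the vertex $v_*$, being of type 1, carries no E-edges and exactly one G-edge $g_* = \{v^*,v_*\}$; consequently the integrand $K_\eps^{(\hat P,G,E)}$ depends on the variable $r_{v_*}$ only through the factor $K_\eps(r_{v_*} - r_{v^*})$. In the iterated integral over $[s,t]_{\hat P}$, the variable $r_{v_*}$ ranges over $[r_{v_*^\down}, r_{v_*^\up}]$, and since $\bar K_\eps$ is an antiderivative of $K_\eps$ vanishing at the origin, Fubini yields
\[
\int_{r_{v_*^\down}}^{r_{v_*^\up}} K_\eps(r_{v_*}-r_{v^*})\,dr_{v_*} = \bar K_\eps(r_{v_*^\up}-r_{v^*}) - \bar K_\eps(r_{v_*^\down}-r_{v^*}).
\]
These two summands are exactly the contributions of the new directed edges in $g^\up$ and $g^\down$, and the remaining integrations reconstruct $J_\eps(s,t)(g^\up)$ and $J_\eps(s,t)(g^\down)$ respectively. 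The degenerate cases are consistent since $\bar K_\eps(0) = 0$.

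For the stabilization assertion, the observation is that both summands $g^\up$ and $g^\down$ have undirected-edge set $G \setminus \{g_*\}$, so each application of $\CI$ strictly decreases $|G|$ on every nonzero term produced. Since $|G|$ is bounded by $|P^\circ|/2$ across the finite set of generators of $\CG^\Z(P)$, taking $N = \lfloor |P^\circ|/2 \rfloor$ ensures that $\CI^N$ lands in the span of graphs with empty $G$, on which $\CI$ acts as the identity by definition. The main obstacle is really the first step: one must carefully verify that all local type constraints, together with the global ``at most one type 3'' invariant and the absence of edges incident to $\hat P \setminus \hat P^\circ$ other than incoming E-edges, are simultaneously preserved by the surgery that defines $\CI$.
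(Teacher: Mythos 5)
Your proposal is correct and follows essentially the same route as the paper's proof: a case analysis showing the surgery preserves the local type constraints and the uniqueness of the type-3 vertex (with self-edges absorbed in the quotient), the Fubini / integration-by-parts computation in the variable $r_{v_*}$ exploiting that $v_*$ is type 1 and $\bar K_\eps(0)=0$, and the observation that $\CI$ strictly decreases $\abs{G}$ until it hits the empty-$G$ fixed point. The only difference is cosmetic: you spell out a couple of degenerate cases (type-3 vs.\ type-1 $v^*$, the self-edge case) a bit more explicitly than the paper does.
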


\begin{proof}
We first argue that one has indeed $(G',E^\up) \in \CG(\hat P')$ (the proof for $E^\down$
being identical). Whether $v^*$ is of type~3 or~1, $v_*$ is necessarily of type~1 since
$v_* \neq v^*$ and, if there exists any vertex of type~3 then this is $v^*$.
Furthermore the only edge touching $v^*$ is $g_*$ so that, after removing it
we are indeed only left with edges in $\hat P'$.

The addition of the edge $(v^*, v_*^\up)$ then guarantees that $v^*$ turns into
a vertex of type~2 in the graph $E^\up$. The vertex $v_*^\up$ on the other hand
has a new incoming edge which turns it into a vertex of type~3 if it was previously of type~1
and leaves it to be of type~2 (or in $\hat P \setminus \hat P^\circ$) otherwise.

We now turn to the proof of \eqref{e:integration}. Since the statement is trivial for 
$g = (\hat P,G,E)$ with $G$ empty, we only need to consider the case when it is non-empty.
For this, we first note that one
has the identity
\begin{equ}
K_\eps^{(\hat P,G,E)}(r) = K_\eps^{(\hat P',G',E)}(r \restr \hat P') K_\eps(r_{v_*} - r_{v^*})\;.
\end{equ}
It follows that we can write
\begin{equs}
J_\eps(s,t)(\hat P, G,E) &= \int_{[s,t]_{\hat P'}} K_\eps^{(\hat P',G',E)}(r)\int_{r_{v_*^\down}}^{r_{v_*^\up}} K_\eps(r' - r_{v^*}) \,dr'\,dr \\
&= \int_{[s,t]_{\hat P'}} K_\eps^{(\hat P',G',E)}(r)\big(\bar K_\eps(r_{v_*^\up} - r_{v^*})
- \bar K_\eps(r_{v_*^\down} - r_{v^*})\big)\,dr\;.
\end{equs}
In order to conclude that \eqref{e:integration} holds, it remains to note that 
$\bar K_\eps(0) = 0$, so that the corresponding terms vanish when $v^* \in \{v_*^\up,v_*^\down\}$.
The last statement follows immediately from the fact that $\CI$ decreases the cardinality of 
$G$ by one, unless $G$ is empty in which case it is the identity.
\end{proof}

The following lemma provides the a priori bounds on $\bar K_\eps$ necessary for our proofs.

\begin{lemma}\label{lem:propKbar}
The kernel $\bar K_\eps$ satisfies the following properties:
\begin{enumerate}
\item There exists a constant $C$ such that $\abs{\bar K_\eps(t)} \le C|t|^{-1/2}$,
uniformly over $\eps \in (0,1]$ and $|t| \le 1$.
\item There exists $c > 0$ such that, for any fixed $\delta \in (0,1]$,
one has 
\begin{equ}[e:intKbar]
\lim_{\eps \to 0} \int_{-\delta}^\delta \abs{\bar K_\eps(t)}^2 \,dt = c\;,\qquad 
\lim_{\eps \to 0} \int_{-\delta}^\delta \abs{\bar K_\eps(t)} \,dt = 0\;.
\end{equ}
\item For every $\alpha > 1/2$ and every $\delta > 0$ there exist  
$\eps > 0$ such that $\abs{\bar K_{\bar \eps}(t)} \le \delta |t|^{-\alpha}$
for all $\bar \eps < \eps$ and all $|t| \le 1$.
\end{enumerate}
\end{lemma}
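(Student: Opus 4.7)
The plan is to derive a scaling form $\bar K_\eps(t) = c_\eps\,\bar{\tilde K}(t/\eps)$ for a fixed smooth profile $\bar{\tilde K}$ and then read off each property from it. Since $K_\eps(t) = \E\xi_\eps(0)\xi_\eps(t)$ is a double convolution of the (distributional) covariance of $\dot B$ with $R_\eps := \rho_\eps\star\tilde\rho_\eps$ (where $\tilde\rho(s) = \rho(-s)$), and since $\dot B$ is self-similar of order $H-1$, rescaling yields
\begin{equ}
K_\eps(t) = C_\eps^2\,\eps^{2H-2}\,\tilde K(t/\eps)\;,\qquad \bar K_\eps(t) = C_\eps^2\,\eps^{2H-1}\,\bar{\tilde K}(t/\eps)\;,
\end{equ}
with $\bar{\tilde K}(v) = \int_0^v \tilde K(u)\,du$. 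A concrete representation is $\tilde K(v) = \tfrac12\int\ddot R(w)|v-w|^{2H}\,dw$ with $R = \rho\star\tilde\rho$; this shows $\tilde K$ is smooth and even. With $C_\eps = \eps^{1/4-H}$ the prefactor of $\bar K_\eps$ reduces to $\eps^{-1/2}$, up to an extra $|\log\eps|^{-1}$ factor when $H = 1/4$.

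Next I would pin down the sharp asymptotic $\bar{\tilde K}(v)\sim H|v|^{2H-1}\sign(v)$ as $|v|\to\infty$. Taylor expanding $|v-w|^{2H}$ in $w$ around $v$ for $|v|$ large and using $\int\ddot R = \int w\,\ddot R(w)\,dw = 0$ (both from $R\in C_c^\infty$ by integration by parts) together with $\int w^2\,\ddot R(w)\,dw = 2\int R = 2$ gives $\tilde K(v) = H(2H-1)|v|^{2H-2} + O(|v|^{2H-3})$. The identity $\hat K_\eps(0) = 0$ (vanishing of the spectral density at frequency zero) forces $\int\tilde K = 0$; combined with evenness this gives $\bar{\tilde K}(\infty) = 0$, and integrating the leading asymptotic back from infinity yields the claim, together with the uniform bound $|\bar{\tilde K}(v)|\lesssim (1+|v|)^{2H-1}$.

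Parts~(1) and~(3) then follow by a direct case split on whether $|t|\le\eps$ or $\eps\le|t|\le 1$. In the first regime, $\|\bar{\tilde K}\|_\infty<\infty$ gives $|\bar K_\eps(t)|\lesssim\eps^{-1/2}\le|t|^{-1/2}$; in the second, the power decay of $\bar{\tilde K}$ converts to $|\bar K_\eps(t)|\lesssim\eps^{1/2-2H}|t|^{2H-1}$, and $2H-1\le -1/2$ combined with $\eps\le|t|$ recovers $|\bar K_\eps(t)|\le C|t|^{-1/2}$. For part~(3) the same split produces an additional factor $\eps^{\alpha-1/2}$ or $\eps^{1/2-2H}$ respectively, both positive powers of $\eps$, so $\delta|t|^{-\alpha}$ is obtained once $\eps$ is small. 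For~(2), the change of variables $t = \eps v$ gives
\begin{equ}
\int_{-\delta}^\delta|\bar K_\eps(t)|^2\,dt = C_\eps^4\eps^{4H-1}\!\int_{-\delta/\eps}^{\delta/\eps}|\bar{\tilde K}(v)|^2\,dv\;,\qquad \int_{-\delta}^\delta|\bar K_\eps(t)|\,dt = C_\eps^2\eps^{2H}\!\int_{-\delta/\eps}^{\delta/\eps}|\bar{\tilde K}(v)|\,dv\;.
\end{equ}
For $H<1/4$ the prefactor $C_\eps^4\eps^{4H-1} = 1$ and $\bar{\tilde K}\in L^2(\R)$ (since $2(2H-1)<-1$), so dominated convergence yields the first limit as $c := \|\bar{\tilde K}\|_{L^2}^2 > 0$, independent of $\delta$. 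The $L^1$ integral is $O((\delta/\eps)^{2H})$, so multiplication by $C_\eps^2\eps^{2H} = \eps^{1/2}$ gives $O(\delta^{2H}\eps^{1/2-2H}) \to 0$.

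The main technical obstacle is the sharp decay rate of $\bar{\tilde K}$, which hinges on two successive moment cancellations ($\int\ddot R = \int w\,\ddot R(w)\,dw = 0$) plus the global zero-mean identity $\int\tilde K = 0$ pinning $\bar{\tilde K}(\infty) = 0$; without these the decay would be only $v^{2H-2}$, which is too slow for~(2). The borderline case $H = 1/4$ is further delicate: $\bar{\tilde K}$ then decays only as $|v|^{-1/2}$, so $\bar{\tilde K}\notin L^2(\R)$ and the inner integral in~(2) diverges logarithmically in $\eps$, which is exactly what the logarithmic factor in the normalisation $C_\eps$ is calibrated to compensate.
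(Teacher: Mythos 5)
Your strategy---deriving the rescaling identity $\bar K_\eps(t) = C_\eps^2\eps^{2H-1}\bar{\tilde K}(t/\eps)$ and reading all three properties off the fixed profile $\bar{\tilde K}$---is exactly the paper's one-line proof, and your case split for~(1), (3) and the change of variables for~(2) are correct. The route you take to the tail $\bar{\tilde K}(v)\sim H|v|^{2H-1}\sign(v)$ is longer than it needs to be, though: since $\bar K_\eps(t)=\int_0^t K_\eps$ and $K_\eps \propto C_\eps^2\,(\rho_\eps\star\tilde\rho_\eps)\star|\cdot|^{2H-2}$, one immediately has $\bar K_\eps = C_\eps^2 H\,(\rho_\eps\star\tilde\rho_\eps)\star\bigl(|\cdot|^{2H-1}\sign(\cdot)\bigr)$, i.e.\ $\bar{\tilde K}(v) = H\,(R\star|\cdot|^{2H-1}\sign(\cdot))(v)$ with $R=\rho\star\tilde\rho$. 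Because $R$ is a compactly supported, even probability density, the asymptotic and the vanishing of $\bar{\tilde K}$ at $\pm\infty$ are then immediate, without the two moment cancellations of $\ddot R$, the extra identity $\int\tilde K=0$, or the step of integrating the $\tilde K$ asymptotic back from infinity.

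The one genuine gap is the $H=1/4$ case of~(2), which you acknowledge and defer but do not close; since the lemma covers $H\in(0,1/4]$, this leaves the proof incomplete, and it is worth actually carrying out because it is \emph{not} a routine ``the log compensates'' step. Your own formula gives $\int_{-\delta}^\delta|\bar K_\eps|^2\,dt = C_\eps^4\eps^{4H-1}\int_{-\delta/\eps}^{\delta/\eps}|\bar{\tilde K}|^2\,dv$; at $H=1/4$ the prefactor reduces to $C_\eps^4$ while the inner integral grows like $2H^2\log(\delta/\eps)$, so the limit is $\lim_{\eps\to0}2H^2C_\eps^4|\log\eps|$. For this to yield a finite $c>0$ one needs $C_\eps\asymp|\log\eps|^{-1/4}$; with $C_\eps=|\log\eps|^{-1/2}$ (the normalisation written when introducing~\eqref{e:SDEfbm}) the limit would in fact vanish. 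Completing the computation is therefore necessary both to finish the proof and to pin down the correct logarithmic power in $C_\eps$.
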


\begin{proof}
This is a straightforward calculation based on the fact that $\bar K_\eps$ is proportional
to the convolution of $\rho_\eps$ with the function $t \mapsto C_\eps |t|^{2H-1}\sign t$.
\end{proof}

\begin{remark}
The second and third properties could of course be made more quantitative, but 
this would then lead us to always treat $H = 1/4$ as a special case which 
is something that we want to avoid.
\end{remark}

\begin{remark}
When $H < 1/4$, the constant $c$ is mollifier-dependent and given by
\begin{equ}
c = \int_{\R} (\rho \star F_H)(t)^2\,dt \;,
\end{equ}
where $F_H(t) = 2H|t|^{2H-1}\sign t$. When $H = 1/4$ however one has
$c = 4H^2$, independently of $\rho$. This is an instance of the usual fact that 
prefactors of logarithmic divergencies tend to be regularisation-independent.
\end{remark}

\subsection{Main estimates}

Given a linear poset $P$ and $E \subset P^\circ\times P^\circ$ such that $(\emptyset,E) \in \CG(P)$,
we say that $E$ is \textit{full} if every connected component of $E$ consists of a 
directed cycle of length $2$.
We furthermore say that $E$ is \textit{ordered} if the partial order on $P$ descends
to a partial order on the quotient $P_E$ of $P$ by the finest equivalence relation 
such that $e \sim f$ for any $(e,f) \in E$.
In other words, we want the transitive closure of the smallest 
relation $\le$ on $P_E$ such that $a\le b$ in $P$ implies $[a] \le [b]$ to be again a partial order.

Finally, we write $E^\le \subset E$ for those edges $(e,f)$ such that $e \le f$.
We are now in a position to state the following bound, which is the main working
horse for this section.

\begin{theorem}\label{thm:mainTechnicalfbm}
Let $P$ be a linear poset with $N = |P^\circ|$, let $E \subset P^\circ\times P^\circ$ be such that 
$(\emptyset,E) \in \CG(P)$, and write $g = (P,\emptyset,E)$. Let furthermore $T \in (0,1]$ and $[s,t]_P \subset [0,T]$. 
Then, one has the following.
\begin{enumerate}
\item There exists a constant $C$ such that $\abs{J_\eps(s,t)(g)} \le C T^{N/2}$, uniformly
over $\eps \in (0,1]$ and $T \in (0,1]$.
\item If either $E$ isn't full or $E$ is full but unordered, 
then $\lim_{\eps \to 0} J_\eps(s,t)(g) =0$.
\item If $E$ is full and ordered then, writing $k$ for the cardinality of $E^\le$, one has
\begin{equ}[e:wantedlimJeps]
\lim_{\eps \to 0} J_\eps(s,t)(g) = 2^{-k} (-c)^{N/2} \abs{[s,t]_{P_E}}\;,
\end{equ} 
where $c$ is as in \eqref{e:intKbar} and $\abs{\cdot}$ denotes Lebesgue measure on 
$\R^{P_E^\circ}$.
\end{enumerate}
\end{theorem}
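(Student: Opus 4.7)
My plan starts with a structural observation. Since $G = \emptyset$, every vertex of $P^\circ$ must be of type~2 in the classification preceding Lemma~\ref{lem:components}, so each has exactly one outgoing edge in $E$ and no incident undirected edge; hence $|E| = N$ and every connected component of $(P, E)$ is either a tree directed toward a root in $P \setminus P^\circ$ (type~(a)) or a cycle with attached trees contained in $P^\circ$ (type~(b)). In particular, $E$ is full iff every component is a 2-cycle.

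For~(1), I combine the pointwise bound $|\bar K_\eps(t)| \le C|t|^{-1/2}$ with the uniform estimate $\sup_\eps \|\bar K_\eps\|_{L^2([-T,T])} \le C$ implied by Lemma~\ref{lem:propKbar}(1--2). Cutting each cycle of $(P^\circ, E)$ at one vertex turns the graph into a rooted forest, on which I integrate iteratively by peeling leaves: for a 2-cycle pair, one integration reduces to $\int \bar K_\eps^2 = O(1)$ times the length $O(T)$ of the second variable's interval, giving $O(T)$; for a non-reciprocated outgoing edge the pointwise bound yields $\int |r_{\mathrm{par}}-r_v|^{-1/2}\, dr_v \le C T^{1/2}$. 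After $N$ such iterations one obtains $C T^{N/2}$.

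For~(2), I distinguish two sub-cases. If $E$ is not full, Lemma~\ref{lem:components} provides a component that is either a tree with root in $P \setminus P^\circ$ or contains a cycle of length $\ge 3$; in either case one locates $v \in P^\circ$ whose outgoing edge $(v, v')$ is not reciprocated in $E$. Integrating $r_v$ first gives a factor $\int_{r_{v^\downarrow}}^{r_{v^\uparrow}} |\bar K_\eps(r_{v'} - r_v)|\, dr_v \to 0$ by Lemma~\ref{lem:propKbar}(2), while the remaining integrand is uniformly dominated as in~(1). If $E$ is full but unordered, the induced relation on $P_E$ contains a cycle $[a_0] \le [a_1] \le \cdots \le [a_k] = [a_0]$ with $[a_0] \neq [a_1]$; in the limit the simultaneous collapse of all 2-cycles would force the additional equalities $r_{a_0} = \cdots = r_{a_k}$, collapsing the support of the limiting measure to a set of positive codimension in $[s,t]^{|P_E^\circ|}$. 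A dominated convergence argument using the uniform dominate of~(1) together with the smallness of $\bar K_\eps$ on sets bounded away from zero (Lemma~\ref{lem:propKbar}(3)) then yields $J_\eps(s,t)(g) \to 0$.

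For~(3), I change variables for each 2-cycle $\{a,b\}$ from $(r_a,r_b)$ to $(r_b, u_{ab} = r_a - r_b)$, so that the integrand becomes $(-1)^{N/2}\prod_{\mathrm{pairs}} \bar K_\eps(u_{ab})^2$, using that $\bar K_\eps$ is odd. As $\eps \to 0$, each $\bar K_\eps^2$ concentrates at $u_{ab}=0$ with mass $\int \bar K_\eps^2 \to c$ by Lemma~\ref{lem:propKbar}(2); the surviving poset constraint restricts $u_{ab}$ to $(-\infty,0]$ if $a<b$ in $P$ (halving the mass to $c/2$) while leaving $u_{ab}$ unrestricted for an incomparable pair (retaining the full mass $c$). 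Since exactly $k$ pairs are comparable, the fast integrations contribute $(-c/2)^k(-c)^{N/2-k} = 2^{-k}(-c)^{N/2}$, and the slow variables $\{r_b\}$ parametrize $[s,t]_{P_E}$ consistently precisely by the \emph{ordered} hypothesis. The main obstacle is to rigorously justify the slow/fast decomposition and the interchange of the $\eps \to 0$ limit with integration at the boundary of $[s,t]_{P_E}$, which I would handle by controlling the $O(\eps)$-thick boundary layers via Lemma~\ref{lem:propKbar}(3) and the dominating bound from~(1).
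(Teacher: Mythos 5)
Your overall strategy mirrors the paper's: peel leaves with the pointwise bound $|\bar K_\eps(t)|\lesssim|t|^{-1/2}$, use the $L^2$-control of $\bar K_\eps$ for cycles, and obtain the limit via a slow/fast change of variables followed by dominated convergence. Parts (1) and (3) are thus in essentially the right shape (though (1) should spell out the Cauchy--Schwarz step at the ``cut'' vertex, and (3) is more a sketch of the interchange-of-limits argument than a proof of it).

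Part (2), however, has a genuine gap in the case of a bare cycle of length $k\ge3$ (a type-(b) component with no attached trees). You ``locate $v\in P^\circ$ whose outgoing edge $(v,v')$ is not reciprocated'' and claim that integrating $r_v$ first produces the factor $\int_{r_{v^\down}}^{r_{v^\up}}|\bar K_\eps(r_{v'}-r_v)|\,dr_v\to0$. But such a $v$ has degree two in the component: there is also the incoming edge from the predecessor $v''$ on the cycle, so the innermost integral is
\begin{equation*}
\int_{r_{v^\down}}^{r_{v^\up}}\bigl|\bar K_\eps(r_{v'}-r_v)\,\bar K_\eps(r_v-r_{v''})\bigr|\,dr_v\;,
\end{equation*}
which by Cauchy--Schwarz is $O(\|\bar K_\eps\|_{L^2}^2)=O(1)$, not $o(1)$; the second factor is singular and cannot simply be absorbed into ``the remaining integrand.'' The smallness has to come from elsewhere in the cycle: after using Cauchy--Schwarz once to integrate a single ``pivot'' vertex (giving an $\eps$-uniform constant), the remaining $k-2\ge1$ kernel factors along the cycle can each be integrated in $L^1$, producing a factor $\bigl(\int_{-2T}^{2T}|\bar K_\eps|\bigr)^{k-2}$ which tends to zero by the second identity of \eqref{e:intKbar}. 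This is exactly what the paper does; your single-vertex $L^1$ argument is valid only for degree-one vertices (leaves of type-(a) components or of trees attached to cycles), so you must treat the two sub-cases separately.

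A smaller point: you also omit the sub-case where $E$ is not full because some component is a $2$-cycle with a non-trivial attached tree; this can be handled by your degree-one peeling, but you should say so rather than listing only ``tree with root outside $P^\circ$'' and ``cycle of length $\ge3$.''
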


\begin{remark}
If $E$ is unordered the definition of $P_E$ still makes sense, but it is only preordered
in the sense that there are $x \neq y \in P_E$ with $x \le y \le x$. In particular, 
every $r \in [s,t]_{P_E}$ must satisfy $r_x = r_y$ so that $\abs{[s,t]_{P_E}} = 0$
and \eqref{e:wantedlimJeps} still holds by point 2.
\end{remark}

\begin{proof}
To obtain the first bound, write $\abs{J_\eps(s,t)}$ for the valuation defined
like $J_\eps(s,t)$, but with all factors of $K_\eps$ and $\bar K_\eps$ replaced
by $|K_\eps|$ and $|\bar K_\eps|$.
If $v \in P^\circ$ is a vertex of degree $1$,
we then write $g' = (P',\emptyset,E')$ with $P' = P\setminus \{v\}$ and $E' = E \setminus \{e\}$ where $e$ is the unique edge
containing $v$. It is then straightforward that one again has $(E', \emptyset) \in \CG(P')$.

If such a vertex of degree $1$ exists, one then has the bound
\begin{equ}[e:boundIntone]
\abs{J_\eps(s,t)(g)} \le \abs{J_\eps(s,t)}(g)
\le CT^{1/2} \abs{J_\eps(s,t)}(g')\;,
\end{equ}
as a consequence of the first bound of Lemma~\ref{lem:propKbar}, which in particular implies that 
any integral of the form $\int_a^b K(t-c)\,dt$ with $a,b,c \in [0,T]$ is bounded
by $CT^{1/2}$ for some $C$. Iterating this bound, we can reduce ourselves to the case when 
$E$ consists of finitely many disjoint circles, each of which of size at least $2$, 
going through all elements of $P^\circ$. 

In fact, by freezing all other variables, we can reduce ourselves to the case of one 
single circle so that it remains to obtain 
a bound of the type $|I_k|  \lesssim T^{k/2}$ for any $k \ge 2$, where
\begin{equ}
I_k = \int_0^T\cdots \int_0^T \bar K_\eps(s_1-s_2)\cdots \bar K_\eps(s_{k-1}-s_k)\bar K_\eps(s_{k}-s_1)\,ds_1\cdots ds_k\;.
\end{equ}
By the first bound of \eqref{e:intKbar} with $\delta = T$ and Cauchy--Schwarz, we can bound the integral of the first and last factors over $s_1$ by a fixed constant.
This shows that $|I_k|$ is bounded by some fixed constant times
\begin{equ}
\int_0^T\cdots \int_0^T \bigl|\bar K_\eps(s_2-s_3)\cdots \bar K_\eps(s_{k-1}-s_k)\bigr|\,ds_2\cdots ds_k\;,
\end{equ}
which is in turn bounded by $T \big(\int_0^{2T} |\bar K_\eps(t)|\,dt\big)^{k-2}$
which is indeed of order $T^{k/2}$ as claimed.

We now turn to the second bound in the case when $E$ isn't full. If 
there exists a vertex $v$ of degree $1$, we can
use the third bound of Lemma~\ref{lem:propKbar} to conclude that, for any $\delta > 0$ and
$\kappa > 0$,
there exists $\eps$ such that, as in \eqref{e:boundIntone} (and with $g'$ defined
in the same way as it is there), one has
\begin{equ}
\abs{J_{\bar \eps}(s,t)}(g) \le \delta T^{1/2-\kappa} \abs{J_{\bar \eps}(s,t)}(g')\;,
\end{equ}
for any $\bar \eps < \eps$. It then follows from the first bound that  
$\lim_{\eps \to 0} \abs{J_{\bar \eps}(s,t)}(g) = 0$ as desired. If there is no vertex of 
degree $1$ then, since $E$ isn't full by assumption, it must contain at least one cycle of 
length $k \ge 3$. Proceeding as in the first bound, this yields a factor
$T \big(\int_0^{2T} |\bar K_\eps(t)|\,dt\big)^{k-2}$ which converges to $0$ as $\eps \to 0$ by
the second identity in \eqref{e:intKbar}.

Since the full but unordered case can be 
obtained in the same way as the third bound, we turn to that one first.
We start by observing that, when $G = \emptyset$ and $E$ is full, then
$N$ is necessarily even  and the
integrand of \eqref{e:defJeps} has constant sign $(-1)^{N/2}$. 
This is because each of the $N/2$ cycles of length $2$ leads to a factor of the 
type $\bar K_\eps(r_1-r_2)\bar K_\eps(r_2-r_1) = -\abs{\bar K_\eps(r_1-r_2)}^2$.
As a consequence, \eqref{e:wantedlimJeps} follows if we can show that
\begin{equ}[e:wantedlimJepsBis]
\lim_{\eps \to 0} \abs{J_\eps(s,t)}(g) = 2^{-k} c^{N/2} \abs{[s,t]_{P_E}}\;.
\end{equ}
We also note that the definition of the (pre)order on $P_E$ is such that, if we lift any
function $r \colon P_E \to \R$ to a function $\iota r$ on $P$ by setting 
$(\iota r)_u = r_{[u]}$, then $\iota r$ is monotone if and only if $r$ is monotone.
The fact that $E$ is ordered is then equivalent to the fact that there exist injective 
monotone functions on $P_E$. Write now $(s,t)_{P_E}$ for the subset of $[s,t]_{P_E}$
consisting of monotone functions that are injective on $P_E^\circ$. 

Every element $e$ of $P_E^\circ$ is an equivalence class consisting of two elements of $P$,
say $e_1$ and $e_2$.\footnote{The order here is completely arbitrary and our argument does
not depend on it.}
%If these are comparable, we order them so that $e_1 \le e_2$,
%i.e.\ $(e_1,e_2) \in E^\le$. 
Write $U_\delta$ for the set of functions $u\colon P_E^\circ \to (-\delta,\delta)$ with
the additional constraint that $u_e \ge 0$ if $e_1 \le e_2$ 
and $u_e \le 0$ if $e_2 \le e_1$. (If $e \not\in E^\le$ there is no constraint on 
the sign of $u_e$.)

We now claim that, given any $r \in (s,t)_{P_E}$, there exists 
$\delta > 0$ such that, for every $u \in U_\delta$, the function $(r|u)\colon P \to \R$ defined by 
\begin{equ}[e:defConcat]
(r|u)(e_1) = r(e)\;,\quad (r|u)(e_2) = r(e) + u(e)\;,
\end{equ}
and such that $(r|u)$ coincides with $s$ on $P^\bot$ and $t$ on $P^\top$ belongs
to $[s,t]_P$. For example, it suffices to take 
$\delta = \inf_{x\in P_E}\inf_{y\in P_E^\circ} |r_x-r_y|/2$. 
This is because 
we know that $(r|0) \in [s,t]_P$ and the boundary of $[s,t]_P$ consists of those
$r$ such that there exists $u\le v$ with $r_u = r_v$. Since $(r|0) \in [s,t]_P$ and
since $e_1$ and $e_2$ are incomparable for $(e_1,e_2) \not\in E^\le$ and satisfy
$e_1 \le e_2$ otherwise, it follows that we can move the coordinates of the
second argument by $\delta$ without leaving $[s,t]_P$, provided that we satisfy the 
ordering constraint on $E^\le$.

Writing $I(\eps,\delta) = \int_{-\delta}^\delta \bar K_\eps^2(u)\,du$, 
it therefore follows that
\begin{equ}
\abs{J_{\bar \eps}(s,t)}(g) \ge \int_{[s,t]_{P_E}} \int_{U_{\delta(r)}} |K_\eps^g(r|u)|\, du\,dr
\ge 2^{-k} \int_{[s,t]_{P_E}} I(\eps,\delta(r))^{N/2}\,dr\;.
\end{equ}
By Lemma~\ref{lem:propKbar}, $I(\eps,\delta(r))$ is bounded uniformly over $\eps,\delta \in (0,1]$
and satisfies $\lim_{\eps \to 0}I(\eps,\delta(r)) = c$ for every $\delta > 0$, so that the 
desired lower bound follows from the dominated convergence theorem.

To obtain the matching upper bound we first note that, in the same way as above,
\begin{equ}[e:badCase]
\lim_{\eps \to 0} \int_{[s,t]_{P_E}} \int_{U_{T}} K_\eps^g(r|u)\, du\,dr = 
2^{-k} c^{N/2} \abs{[s,t]_{P_E}}\;.
\end{equ}
This however is not sufficient since the image of $[s,t]_{P_E}\times U_T$ under
$(r,u) \mapsto (r|u)$ does not fully cover $[s,t]_{P}$. The reason is that while
every element in $[s,t]_{P}$ can uniquely be written in the form $(r|u)$, it does not
necessarily follow that $r \in [s,t]_{P_E}$.
However, if we take an arbitrary injective function $r \colon P_E \to \R$
which coincides with $s,t$ on $P^\bot$ and $P^\top$ and is such that $r \not \in [s,t]_{P_E}$,
then we know that $(r|0) \not \in [s,t]_{P_E}$. In particular, this shows again that there
exists $\delta > 0$ (depending on $r$) such that, for every $u$ such that 
$(r|u) \in [s,t]_{P}$, there exists some coordinate $f \in P_E^\circ$ such that $|u_f| \ge \delta$.
Write now $V_\delta^{(f)}$ for the set of maps $u \colon P_E^\circ \to [-T,T]$ such that 
$|u_f| \ge \delta$. It then follows that one has
\begin{equ}[e:upperBoundJeps]
\abs{J_{\bar \eps}(s,t)}(g) \le \int_{[s,t]_{P_E}} \int_{U_{T}} |K_\eps^g(r|u)|\, du\,dr
+ \sum_{f \in P_E^\circ} \int_{[s,t]_{P_E}^c} \int_{V_{\delta(r)}^{(f)}} |K_\eps^g(r|u)|\, du\,dr\;,
\end{equ}
where $[s,t]_{P_E}^c$ denotes the complement of $[s,t]_{P_E}$.
The first term converges to the desired bound by \eqref{e:badCase}. Regarding the second term,
each summand is bounded by 
\begin{equ}
\int_{[s,t]_{P_E}} I(\eps,T)^{N/2-1} \bigl(I(\eps,T) - I(\eps,\delta(r))\bigr)\,dr\;,
\end{equ}
which converges to zero by the dominated convergence theorem.

It now remains to treat the case when $E$ is full but unordered. It is then the case 
that for \textit{every} injective $r \colon P_E \to \R$ (compatible with $s,t$) there exists
$\delta > 0$ such that, for every $u$ such that 
$(r|u) \in [s,t]_{P}$, there exists some coordinate $f \in P_E^\circ$ such that $|u_f| \ge \delta$.
As a consequence, one has an upper bound analogous to \eqref{e:upperBoundJeps}, but with the first
term missing, which proves the claim.
\end{proof}

We conclude this section by deriving a consequence of Theorem~\ref{thm:mainTechnicalfbm} which 
covers all the cases of interest to us in a way that is much easier to verify,
namely in terms of graphs of the type $(P,G,\emptyset)$ which arise naturally in our
estimates, rather than the graphs of type $(P,\emptyset,E)$ for which the theorem applies.

Given a linear finite poset $P$ and a pairing $G$ of $P^\circ$, we say that $G$
is ``parallel'' if there exists a pairing $H$ of $G$ such that, for any
$\{\{e_1,e_2\}, \{f_1,f_2\}\} \in H$, one has (modulo relabelling one of the two edges)
\begin{equ}[e:pairedEdges]
f_1 \in \{e_1^\up, e_1^\down\}\;,\qquad f_2 \in \{e_2^\up, e_2^\down\}\;.
\end{equ}
We say that such a pair of edges is ``crossed'' if one can furthermore relabel
the elements in each of the two edges in such a way that
$f_1 = e_1^\up$ and $f_2 = e_2^\down$. 
The first two graphs in \eqref{e:CovarianceX2} below show one pair of uncrossed parallel 
edges and one pair of crossed parallel edges.

Note that if $G$ is parallel, then the
pairing $H$ is necessarily unique as a simple consequence of the finiteness of $G$.
(We can construct $H$ inductively by taking a minimal vertex $v$ not yet belonging to any
of the pairs of edges in $H$ and noting that the edge of $G$ containing $v$ can only possibly be 
paired with the unique edge containing its successor $v^\up$. This exhausts $G$ after finitely many steps.)
Given a parallel pairing $G$, we define the poset $\hat P$ as the quotient of $P$ by
the equivalence relation under which, for any pair of parallel edges as in \eqref{e:pairedEdges},
one has $e_1\sim f_1$ and $e_2 \sim f_2$. This is then naturally endowed with a full set
$E$ of directed edges which is such that, for any pair of edges in $H$, one
has $([e_1],[e_2]), ([e_2],[e_1]) \in E$.
We then have the following result, where we set $J(s,t) = \lim_{\eps \to 0} J_\eps(s,t)$
(which exists by Theorem~\ref{thm:mainTechnicalfbm}).

\begin{proposition}\label{prop:convMoments}
Let $P$ be a linear finite poset with $|P^\circ| = 2N$ and let $G$ be a pairing of $P^\circ$.
If $G$ is parallel then, defining $\hat P$ and $E$ as just discussed, one has
\begin{equs}
J(s,t)(P,G,\emptyset) &= 
(-1)^{\f N2-\ell} J(s,t)(\hat P,\emptyset,E) \\&= 
(-1)^\ell 2^{-k} c^{\f N2} \abs{[s,t]_{\hat P_E}} \;. \label{e:wantedLimit}
\end{equs} 
Here, $k$ denotes as before the cardinality of $E^{\le}$ while $\ell$
denotes the number of crossed pairs in $G$.
If $G$ is not parallel, then $J(s,t)(P,G,\emptyset) = 0$.
\end{proposition}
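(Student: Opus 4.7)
The plan is to reduce the statement to Theorem~\ref{thm:mainTechnicalfbm} by iterating the integration-by-parts operator $\CI$ from Proposition~\ref{prop:Integration}. Writing $\CI^\infty(P,G,\emptyset) = \sum_i \sigma_i(\hat P_i,\emptyset,E_i)$, every term in this signed sum has $|\hat P_i^\circ| = |E_i| = N$, and Proposition~\ref{prop:Integration} gives
\begin{equ}
J_\eps(s,t)(P,G,\emptyset) = \sum_i \sigma_i\, J_\eps(s,t)(\hat P_i,\emptyset,E_i)\;.
\end{equ}
By Theorem~\ref{thm:mainTechnicalfbm}, only terms with $E_i$ full and ordered survive in the limit $\eps \to 0$, so everything reduces to enumerating such terms with their signs. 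Orderedness does not need to be checked separately, since an unordered $E$ gives $\abs{[s,t]_{\hat P_E}} = 0$ and hence a vanishing contribution.

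The central claim is that a full $E_i$ appears in $\CI^\infty(P,G,\emptyset)$ if and only if $G$ is parallel, in which case the full term is unique and its underlying graph coincides with the $(\hat P,\emptyset,E)$ described in the statement (up to a relabeling preserving $|E^\le|$ and $\abs{[s,t]_{\hat P_E}}$). Indeed, each $\CI$-step picks a vertex $v^*$ (the unique type-$3$ vertex if one exists, the smallest type-$1$ vertex otherwise) together with its $G$-partner $v_*$, and produces either $(v^*,v_*^\up)$ with sign $+$ or $(v^*,v_*^\down)$ with sign $-$, removing $v_*$. For $E_i$ to be full, every produced edge $(v^*,w)$ must eventually be matched by a return edge $(w,v^*)$; since edges always point from the current $v^*$ to an immediate neighbor of $v_*$, this return forces the $G$-partner of $w \in \{v_*^\up,v_*^\down\}$ to be an immediate neighbor of $v^*$, which is exactly the parallel condition on $\{v^*,v_*\}$. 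Iterating, a full $E_i$ arises only when all of $G$ is parallel, and in that case the $\up/\down$ choice at every step is uniquely forced: any other choice leads either to a self-loop (which vanishes in $\CG^\Z(P)$) or to an edge into $P \setminus P^\circ$ producing a tree component rather than a $2$-cycle.

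The sign $\sigma$ of the unique full $E$ is the product of contributions from the $N/2$ parallel pairs of $G$-edges, and is computed by an elementary case analysis. For an uncrossed pair with $f_1 = e_1^\up, f_2 = e_2^\up$, the IBP on $\{e_1,e_2\}$ produces $(e_1,f_2)$ via the $\up$ direction (sign $+$), while the ensuing IBP on $\{f_1,f_2\}$, with $f_2$ now of type $3$, produces $(f_2,e_1)$ via the $\down$ direction (sign $-$, as $f_1^\down = e_1$), giving total sign $-1$; the symmetric uncrossed configuration behaves identically. For a crossed pair with $f_1 = e_1^\up, f_2 = e_2^\down$, both steps use the $\down$ direction (the $\up$ alternatives produce self-loops or edges into $P \setminus P^\circ$), giving total sign $(-)(-) = +1$. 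Multiplying over all parallel pairs yields $\sigma = (-1)^{N/2-\ell}$, which is the first equality in \eqref{e:wantedLimit}. The second equality then follows from Theorem~\ref{thm:mainTechnicalfbm}(3) together with the identity $(-1)^{N/2-\ell}(-1)^{N/2} = (-1)^\ell$, valid since $N$ is necessarily even whenever $G$ is parallel.

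The main obstacle is the careful verification that this inductive argument is robust with respect to the (arbitrarily fixed) total order used by $\CI$ to pick $v^*$ among type-$1$ vertices, and in particular that at every intermediate stage of the IBP the resulting graph continues to satisfy the type constraints of $\CG(\cdot)$ so that $\CI$ remains well-defined. The uniqueness of the parallel pairing $H$ is crucial here, as it rules out ambiguity in which parallel partner is being processed next, regardless of processing order.
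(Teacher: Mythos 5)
Your proposal follows essentially the same route as the paper: reduce to Theorem~\ref{thm:mainTechnicalfbm} via $\CI^\infty$, identify parallelism as the condition under which a full $E$ survives, and compute the sign by a crossed/uncrossed case analysis pair by pair. The paper organizes this as an iterated application of $\CI^2$ on the quotient $\bar\CG^\Z(P) = \CG^\Z(P)/\hat\CG^\Z(P)$, where $\hat\CG^\Z(P)$ consists of graphs already doomed to give zero (an $E$-edge into $\hat P\setminus\hat P^\circ$, or a component with at least two edges that is not a two-cycle). Two points in your write-up are imprecise or misdirected. First, you present the total order as ``arbitrarily fixed'' and single out robustness with respect to it as the main unresolved obstacle; in fact no robustness is needed, one simply \emph{chooses} the total order compatible with the partial order, and this is precisely what forces the $\up/\down$ choice to be unique at the type-3 step, because the $G$-partner $\tilde v$ of the current type-3 vertex then satisfies $\tilde v > v^*$, hence $\tilde v^\up \neq v^*$, so only the $\down$ direction can close the two-cycle. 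Second, your claim that ``any other choice leads either to a self-loop \dots\ or to an edge into $P\setminus P^\circ$'' is not literally true: the first step of an unfavorable branch can produce a perfectly interior edge between two $P^\circ$ vertices, and the vanishing is only seen after a further $\CI$-step places the term in $\hat\CG^\Z(P)$ (e.g.\ as a two-cycle with a hanging tree). The paper handles this cleanly by working modulo $\hat\CG^\Z(P)$ from the outset and establishing the exact identity $\CI^2 g = \pm g'$ or $0$ in that quotient, which is the precise version of your ``unique full term'' assertion. The sign bookkeeping, the observation that unordered $E$ gives $\abs{[s,t]_{\hat P_E}}=0$, and the identification of the limiting graph with $(\hat P,\emptyset,E)$ up to a relabelling preserving $\abs{E^\le}$ and the Lebesgue measure are all correct.
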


\begin{proof}
Denote by $\hat\CG(P) \subset \CG(P)$ the subsets of graphs $(\hat P,G,E)$ such that 
either $E$ contains an edge adjacent to $\hat P \setminus \hat P^\circ$ or $E$ contains
a connected component of size at least $2$ which isn't a two-cycle.
Since the action of $\CI$ always causes $E$ to grow, we see that 
$\CI \colon \hat\CG^\Z(P) \to \hat\CG^\Z(P)$ and therefore, by 
Theorem~\ref{thm:mainTechnicalfbm} and Proposition~\ref{prop:Integration},
that $\hat\CG(P) \subset \ker J(s,t)$.
In particular, both $\CI$ and $J(s,t)$ 
are well-defined on $\bar \CG^\Z(P) \eqdef \CG^\Z(P)/\hat \CG^\Z(P)$.

Assume now that the total order on $P$ used in the definition of $\CI$ is compatible
with the given partial order. Consider then $g = (\hat P, G,E) \in \CG(P)$ such that
$E$ contains only directed cycles of length $2$ and such that $G$ is non-empty. 
(In particular $g$ contains no vertex of type~3.)
Let furthermore $e = \{v^*,v_*\} \in G$ be the edge containing the smallest vertex $v^* \in \hat P^\circ$ of 
type~1 and let $f \in G$ be the edge containing the vertex $(v^*)^\up$ (and $f = \emptyset$ if no 
such edge exists). If $f \neq\emptyset$ and $f$ is parallel to $e$ (i.e.\ one has $f = \{(v^*)^\up,\hat v\}$ with
either
$\hat v = v_*^\up$ ``uncrossed''
or $\hat v = v_*^\down$ ``crossed''), then we define 
$g' = (\hat P', G', E')$ by setting
\begin{equ}
\hat P' = \hat P \setminus\{v_*, (v^*)^\up\}\;,\quad
G' = G\setminus\{e,f\}\;,\quad E' = E \cup \{(v^*,\hat v), (\hat v, v^*)\}\;.
\end{equ}
We claim that in $\bar \CG^\Z(P)$ one then has
\begin{equ}[e:wanted identity]
\CI^2 g = 
\left\{\begin{array}{cl}
	0 & \text{if $f = \emptyset$ or $f$ isn't parallel to $e$,} \\
	g' & \text{if the pair $(e,f)$ is crossed,}\\
	-g' & \text{if the pair $(e,f)$ is uncrossed.}
\end{array}\right.
\end{equ}
Since $\f N2-\ell$ is precisely the number of uncrossed pairs in $G$, this immediately implies
the first identity in \eqref{e:wantedLimit} (since $g'$ is of the same type as $g$, so this identity can
be iterated until $G$ is empty), whence the claim follows from Theorem~\ref{thm:mainTechnicalfbm}.

In order to show \eqref{e:wanted identity}, note that the effect of $\CI$ is to delete the node 
$v_*$ and to replace the edge $e \in G$ by $(v^*, \hat v)$ in $E$ with $\hat v \in \{v_*^\up,v_*^\down\}$,
yielding the two terms in \eqref{e:defCI}
(the second case being the one that comes with a minus sign). If either of these two nodes is of type~2, then this creates a connected component of $E$
with at least two edges, but this cannot be a cycle (since $v^*$ was of type~1) so such terms vanish
in $\bar \CG^\Z(P)$. If on the other hand $\hat v$ is of type~1 in $g$, then this turns 
into a node of type~3
in the corresponding term of $\CI g$. It follows that $\CI^2 g$ is obtained by 
considering the (unique) edge in $G$ of the form $f = \{\hat v, \tilde v\}$, deleting
the vertex $\tilde v$, and replacing $f$ by $(\hat v, \tilde v^\up)$ or $(\hat v, \tilde v^\down)$. Since this edge follows the edge $(v^*, \hat v)$, the only way in which we obtain 
a non-zero element of $\bar \CG^\Z(P)$ is if either $\tilde v^\up$ or $\tilde v^\down$
coincide with the node $v^*$ we started from.
Since $v^*$ was the minimal node of type~1 and since $\tilde v$ is necessarily of type~1,
it cannot be the case that $\tilde v^\up = v^*$, so the only way of getting a non-zero
element is to have $f$ parallel to $e$ and containing $(v^*)^\up$. 
Since $(v^*)^\up$ can only
be contained in one edge of $G$, we conclude that we have $\CI^2 = \pm g'$ when $f$ is
parallel to $e$ and $0$ otherwise. The correct sign is obtained by tracking the two cases.
\end{proof}

\subsection{A tool for convergence}
\label{sec:toolConv}

Before we turn to the convergence of the random rough paths $\X_\eps$, we 
provide an estimate for quantities of the same type $J_\eps(s,t)(g)$ as considered
in \eqref{e:defJeps}, but with the addition of a second type of edges that represent 
cutoff functions at a different scale $\delta > 0$. (One should think of $\delta \ll 1$, but
without any relation between the sizes of $\eps$ and $\delta$.) This section can be skipped at the 
first read, but its results will play a crucial role in identifying the limit
of the fourth-order component of $\X_\eps$.

More precisely, 
we fix once and for all a smooth increasing function $\chi \colon \R \to [0,1]$ with 
$\chi(0) = 0$ and $\chi(1) = 1$, and we set $\chi_\delta(t) = \chi(t/\delta)$.
We then show a variant of Theorem~\ref{thm:mainTechnicalfbm} which 
also allows for kernels of the type $(1-\chi_\delta)(t-s)$ and $\chi_\delta'(t-s)$.
Even though $1-\chi_\delta$ is not symmetric, it will be convenient to also represent it
by an unoriented edge which will be unambiguous since in our setting these kernels will always 
join two comparable vertices in our poset and the corresponding factor will always have an argument of the form
$r_u - r_v$ with $v \le u$.

To formalise this, we now assume that the set of unoriented edges $G$ comes endowed with a decomposition
$G = G_K \sqcup G_\chi$ and similarly for $E$. We also make the standing assumption
that edges in $G_K$ always connect vertices that are comparable.
It will then be convenient to denote edges $e$ in $G_\chi$ as 
$e = \{e_\down,e_\up\} \in G_\chi$ with $e_\down \le e_\up$. We will use the same
convention for edges $e \in E_\chi$. These edges furthermore come with their own orientation
and we introduce the notation $(-1)^e$ to denote whether these match or not. 
More precisely, given $e = (e_1,e_2) \in E_\chi$, we set $(-1)^e = 1$ if $e_1 \le e_2$
and $(-1)^e = -1$ otherwise. With all these notations at hand, we then set
similarly to \eqref{e:defJeps}
\begin{equ}[e:defJepsdelta]
J_{\eps,\delta}(s,t)( P, G,E) = \int_{[s,t]_{\hat P}} K_\eps^{( P,G_K,E_K)}(r)\,H_\delta^{( P,G_\chi,E_\chi)}(r) \,dr\;,
\end{equ}
where we wrote
\begin{equ}[e:defIntegrandJ]
H_\delta^{( P,G,E)}(r)
= \prod_{e \in G} (1-\chi_\delta)(r_{e_\up}-r_{e_\down})\prod_{e \in E}(-1)^e \chi_\delta'(r_{e_\up}-r_{e_\down})\;.
\end{equ}

Given a finite poset $P$, we now write similarly to before $\CG(P)$ 
for those graphs $(\hat P,G,E)$ such that $\hat P$ is a subset of $P$, 
$G = G_K \sqcup G_\chi$
(and similarly for $E$), such that edges in $G_\chi$ and $E_\chi$ are
between comparable elements, and
such their local structure around any vertex $v \in \hat P^\circ$ is of one
of the following types:
\begin{enumerate}
\item There is exactly one $g \in G_K$ with $v \in g$, at most one
$g \in G_\chi$ with $v \in g$, and no edge of $E$ touching $v$.
\item There is exactly one edge $e \in E$ pointing out of $v$, i.e.\ of the 
type $(v,v')$ for some $v' \in V$, and no edge in $G_K$ containing $v$.
If $e \in E_\chi$, then one furthermore has at least one edge in $E_K$ pointing
into $v$.
\item There is exactly one edge in $E$ pointing into $v$, no edge in $E$
pointing out of $v$, and exactly one edge in $G_K$ containing $v$.
\end{enumerate}
We again impose that there is at most one vertex of type~3
and that vertices in $V \setminus V^\circ$ only have edges in $E$ pointing into them.
We furthermore impose that distinct edges in $G_\chi$ are not comparable, i.e.\ if
$g_i = (g_i^\up, g_i^\down) \in G_\chi$ then $g_1^\up \le g_2^\up$ implies
$g_1 = g_2$.\footnote{The fact that $g_1^\down \le g_2^\down$ implies
$g_1 = g_2$ is of course equivalent.}
It is easy to check that Lemma~\ref{lem:components} still holds for such graphs.

We then define $\CG^\Z(P)$ as before as the $\Z$-module generated by $\CG(P)$, quotiented by
the submodule generated by graphs containing directed self-edges. Again, the valuation $J_{\eps,\delta}$
is well-defined on $\CG^\Z(P)$ as a consequence of the fact that one has $\bar K_\eps(0) = \chi'_\delta(0) = 0$.
In this setting, we can again define an integration by parts operator $\CJ \colon \CG^\Z(P) \to \CG^\Z(P)$ 
in a way quite similar to $\CI$,
but taking into account that we may not always be able to restrict our
integrations over nodes adjacent to a single edge.  
If $G_K$ is empty, then we do again set 
$\CJ(\hat P,G,E) = (\hat P,G,E)$. If not, let $v^*, v_* \in P^\circ$ be as
in the definition of $\CI$ on page~\pageref{p:defI}, but substituting $G_K$ for $G$.
We then define $\hat P' = \hat P \setminus \{v_*\}$,  $G' = G \setminus \{\{v^*,v_*\}\}$,
$E^\up = E_K \cup \{(v^*, v_*^\up)\}$ and $E^\down = E_K \cup \{(v^*, v_*^\down)\}$
in the same way as before.
However, while it is again the case that $v_*$ is of type~1 but there are now two possibilities,
depending on whether $v_*$ also belongs to an edge in $G_\chi$ or not.

If it does not, then we proceed exactly as before, setting 
\begin{equ}[e:defJ21]
\CJ(\hat P,G,E) = g^\up - g^\down\;,
\end{equ}
this time with $g^\up = (\hat P',G',E^\up \sqcup E_\chi)$.
If it does on the other hand, then there exists a unique $\bar v \in \hat P^\circ$ such that 
$e \eqdef \{v_*,\bar v\} \in G_\chi$ and we write $G'' = G' \setminus \{e\}$. In this case, we set
$G^\up = G'' \cup \{\{v_*^\up,\bar v\}\}$
and similarly for $G^\down$. 
Note that it may happen that $v_*^\up = \bar v$ (resp.\ $v_*^\down = \bar v$) in which case
we simply set $G^\up = G''$ (resp.\ $G^\down = G''$).\footnote{It may be easier to think of elements 
of $\CG(P)$ as being equivalent if the corresponding sets $G_\chi$ only differ by singletons.}
We then define $\CJ$ as
\begin{equ}
\CJ(\hat P,G,E) = g^\up - g^\down
 + (\hat P,G'',E^0)\;,\qquad\label{e:defJ22}
\end{equ}
where we set $E^0 = (E_K \cup \{(v^*,v_*)\}) \sqcup (E_\chi \cup \{(v_*,\bar v)\})$.
We now show that the analogue of \eqref{e:integration} still holds in this extended case.
The case when $v_*$ does not
belong to an edge in $G_\chi$ is treated exactly as in Proposition~\ref{prop:Integration}. 
In the other case, assuming that $v_* \ge \bar v$, we note that \eqref{e:defJepsdelta} can 
be written as 
\begin{equ}
J_{\eps,\delta}(s,t)(\hat P, G,E) = \int_{[s,t]_{\hat P'}} F_{\eps,\delta}(r)\int_{r_{v_*^\down}}^{r_{v_*^\up}} K_\eps(r' - r_{v^*})(1-\chi_\delta)(r' - r_{\bar v}) \,dr'\,dr\;,
\end{equ}
where the edge $g$ is as above and $F_{\eps,\delta}$ contains all the factors corresponding to 
edges in $G''$ and $E$. (In the case $v_* \le \bar v$ the argument of $\chi_\delta$ changes sign.)
We then integrate by parts, yielding
\begin{equs}
J_{\eps,\delta}(s,t)(\hat P, G,E) &= \int_{[s,t]_{\hat P'}} F_{\eps,\delta}(r)
 \bar K_\eps(r' - r_{v^*})(1-\chi_\delta)(r' - r_{\bar v})\Big|_{r_{v_*^\down}}^{r_{v_*^\up}}\,dr \\
&\quad + \int_{[s,t]_{\hat P'}} F_{\eps,\delta}(r)\int_{r_{v_*^\down}}^{r_{v_*^\up}} \bar K_\eps(r' - r_{v^*})\chi_\delta'(r' - r_{\bar v}) \,dr'\,dr\;,
\end{equs}
which can indeed be identified with the three terms appearing in \eqref{e:defJ22}.
Note that the additional sign appearing in the case $v_* \le \bar v$ is taken care of by
the factor $(-1)^e$ appearing in the definition \eqref{e:defIntegrandJ} of the integrand.

Since the action of $\CJ$ always decreases the number of elements in $G_K$ by one (unless 
it is empty), we see again that its successive applications stabilise, so that it suffices
to bound $J_{\eps,\delta}(s,t)(\CJ^\infty(\hat P,G,E))$ for the graphs appearing in 
\eqref{e:mainTermsDelta} and \eqref{e:extraTermsDelta}.
We then have the following analogue to Theorem~\ref{thm:mainTechnicalfbm}
where we say that $E$ is ``full'' if each of its connected components consists of a 
cycle containing exactly two edges in $E_G$ (but possibly additional edges in $E_\chi$).

\begin{theorem}\label{theo:boundepsdelta}
Let $P$ be a linear poset with $N = |P^\circ|$, let $E = E_K \sqcup E_\chi$ and $G_\chi$ be such that 
$g = (P,G_\chi,E) \in \CG(P)$. 
For any $T > 0$ there exists a function $o$ as above such that, for any fixed $s,t \le T$,
\begin{equ}
\abs{J_{\eps,\delta}(s,t)(g)} \le o(\eps) + o(\delta)\;,
\end{equ}
if $E$ isn't full or $G_\chi$ contains an element linking two distinct
connected components of $E$.
\end{theorem}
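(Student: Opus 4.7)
The plan is to mirror the structure of the proof of Theorem~\ref{thm:mainTechnicalfbm}, with a case split on which of the two hypotheses is assumed. Since $g = (P, G_\chi, E) \in \CG(P)$ already has $G_K = \emptyset$, the operator $\CJ$ acts trivially on $g$, so we may work directly with the integral \eqref{e:defJepsdelta} rather than its image under $\CJ^\infty$. The two qualitative observations that keep the $\chi_\delta$-kernels under control throughout are that $\|1-\chi_\delta\|_{L^\infty} \le 1$ and $\|\chi_\delta'\|_{L^1}$ is bounded uniformly in $\delta \in (0,1]$; as a consequence, wherever the analysis of Theorem~\ref{thm:mainTechnicalfbm} integrates one endpoint of a $\bar K_\eps$-edge, the adjacent $\chi_\delta$-factors contribute only a harmless multiplicative constant.

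First I would treat the case where $E$ is not full. Applying the analogue of Lemma~\ref{lem:components} to the type structure of $\CG(P)$ with $G_K = \emptyset$, each connected component of $(P,E)$ is either a tree directed to a root in $P \setminus P^\circ$ or a directed cycle with attached trees. If some vertex $v \in P^\circ$ has $E_K$-degree one, I integrate out $r_v$ first, using the third bound of Lemma~\ref{lem:propKbar} to replace the single factor $|\bar K_\eps(r_v-r_{v'})|$ by $\delta'\, |r_v - r_{v'}|^{-\alpha}$ for any prescribed $\delta' > 0$ once $\eps$ is small enough; the $1-\chi_\delta$ and $\chi_\delta'$ factors carried by $v$ are absorbed into bounded or $L^1$-bounded multiplicative constants. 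Iterating this reduction collapses the graph down to a disjoint union of cycles of $E_K$-length at least three, whose integration produces a factor $\int_{-2T}^{2T} |\bar K_\eps(t)|\,dt \to 0$ by the second identity in \eqref{e:intKbar}. The total contribution in this case is $o(\eps)$ uniformly in $\delta$.

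Next I would treat the case where $E$ is full but there exists a $G_\chi$-edge $e = \{v_\down, v_\up\}$ with $v_\down \in C_1$, $v_\up \in C_2$ in distinct 2-cycle components $C_1 \neq C_2$. Within each cycle $C_i$ (and all remaining structure outside $C_1 \cup C_2$), I would integrate using the same pairing/estimate used to prove part~1 of Theorem~\ref{thm:mainTechnicalfbm}, which bounds the inner integrals by a constant independent of the positions of $v_\down, v_\up$. The factor $(1-\chi_\delta)(r_{v_\up} - r_{v_\down})$ then restricts the remaining two-variable integration to the set $\{0 \le r_{v_\up} - r_{v_\down} \le \delta\} \subset [0,T]^2$, which has Lebesgue measure $\lesssim T\delta$. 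This gives $|J_{\eps,\delta}(s,t)(g)| \lesssim \delta$, which is $o(1)$ as $\delta \to 0$, and combining with the previous case yields the required bound $o(\eps) + o(\delta)$.

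The main technical obstacle lies in Case~2: to apply the bound from $1 - \chi_\delta$ cleanly, I need to be able to "freeze" $r_{v_\down}$ and $r_{v_\up}$ and integrate out their entire 2-cycle components first. When the cycles carry additional $E_\chi$-decorations, or when $v_\down$ or $v_\up$ is simultaneously the endpoint of other $G_\chi$- or $E_\chi$-edges, this freezing is not immediate and must be justified by first integrating the interior $\chi_\delta'$-factors (each contributing a bounded quantity in $L^1$ and strictly localised at scale $\delta$) before the $\bar K_\eps^2$-integration of the 2-cycle. The proof is completed by verifying, through the type-2/type-3 local structure around the offending vertices, that this reorganisation can be performed without loss in every configuration of $\CG(P)$ satisfying the theorem's hypothesis.
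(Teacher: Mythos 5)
Your high-level plan — split on whether $E$ is full, treat the not-full case by the same degree-reduction as in the proof of Theorem~\ref{thm:mainTechnicalfbm} while discarding the $\chi_\delta$-factors, and extract a factor of order $\delta$ from the $G_\chi$-edge in the full case — correctly captures the structure of the argument, and your Case~1 is essentially the paper's. The problem is Case~2, and it is precisely the gap you flag in your final paragraph: the ``freezing'' step is not a harmless reorganisation, it is the technical heart of the bound, and it does not come for free.

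Concretely, ``integrate out the 2-cycle companions first and then use $1-\chi_\delta$ on the two remaining coordinates'' only works cleanly when the companions carry no other kernel factors. As soon as a companion $w$ of $v_\down$ also sits on an $E_\chi$-edge, or when the full cycle containing $v_\down$ involves an $E_\chi$-edge (which the paper's notion of ``full'' explicitly allows), the inner integral $\int |\bar K_\eps(r_{v_\down}-r_w)|^2\,|\chi_\delta'(\cdots)|\,dr_w$ is no longer uniformly bounded by Cauchy--Schwarz: $\|\chi_\delta'\|_{L^\infty}\sim\delta^{-1}$ and $\|\bar K_\eps^2\|_{L^\infty}$ blows up as $\eps\to 0$, so neither obvious Hölder pairing closes. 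This is not a bookkeeping nuisance that one ``verifies through the type-2/type-3 structure''; some new input is needed. The paper supplies it by not attempting a fresh argument at all: it imports the already-proved upper bound \eqref{e:upperBoundJeps} from Theorem~\ref{thm:mainTechnicalfbm}, based on the change of variables $(r,u)\mapsto(r|u)$. In that parametrisation the cycle-internal fluctuations $u$ carry the $\bar K_\eps^2$-mass (bounded uniformly by \eqref{e:intKbar}), the error term is $o(\eps)$ exactly as before, and — since the labelling $e\mapsto(e_1,e_2)$ in \eqref{e:defConcat} is free to choose — the $G_\chi$-factor linking the two cycles can be arranged to read $(1-\chi_\delta)(r_e - r_{\bar e})$, i.e.\ to depend only on the \emph{outer} $r$-variables. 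That makes the restriction $|r_e - r_{\bar e}|\le\delta$ of the outer integral a genuine statement about Lebesgue measure, independent of what the $u$-integration does, which is exactly the decoupling your proposal asserts but does not establish. You should therefore replace your direct ``freeze'' argument in Case~2 by an appeal to \eqref{e:upperBoundJeps} together with this choice of labelling; otherwise the proof remains incomplete at precisely the point you identified.
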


\begin{proof}
It suffices to show that $\abs{J_{\eps,\delta}(s,t)(g)} \le o(\eps)$
if $E$ isn't full and that $\abs{J_{\eps,\delta}(s,t)(g)} \le o(\delta) + o(\eps)$
if $E$ is full and $G_\chi$ links (at least) two of its connected components.

The proof of the first bound is virtually identical to that of the second
statement of Theorem~\ref{thm:mainTechnicalfbm}, noting that the factors 
$(1-\chi_\delta)$ can simply be bounded by one
(so edges in $G_\chi$ can be ignored) and that the integral of $|\chi_\delta'|$
is of order $1$.

To prove the second bound, we note that one has for $\abs{J_{\eps,\delta}(s,t)(g)}$ the 
bound \eqref{e:upperBoundJeps}, but with the integrand $K_\eps^g$ defined in 
such a way that it also includes the factors $(1-\chi_\delta)$. Since the second 
term in this expression is $o(\eps)$ we only need to bound the first term.
We now use the fact that 
the order used in the definition \eqref{e:defConcat} of $(r|u)$ was arbitrary.  We
can therefore make sure that there exist two elements $e,\bar e$ in $P_E^\circ$ 
(corresponding to the two cycles
linked by an edge in $G_\chi$) such that the corresponding elements $e_1$ and $\bar e_1$
appearing in \eqref{e:defConcat} satisfy $\{e_1,\bar e_2\} \in G_\chi$.
As a consequence, $(K_\eps^g)(r|u)$ contains a factor $(1-\chi_\delta)(r_{e}-r_{\bar e})$,
which implies that we can restrict the outer integral in the first term
of \eqref{e:upperBoundJeps} to the set of $r \in [s,t]_{P_E}$ such that 
$|r_{e}-r_{\bar e}| \le \delta$. This immediately implies that it is bounded by a quantity
of order $\delta$, as required.
\end{proof}

\subsection{Convergence}

We now have the tools required to show the main result of this section, namely Theorem~\ref{thm:mainRigorous} below, which is the precise formulation of Theorem~\ref{thm:mainfbm}.
In order to formulate our main result regarding the convergence of $\X_\eps$, we 
write $\cC_g^\alpha$ for the space of $\R^{m}$-valued 
geometric rough paths of regularity $\alpha $.
As a first ingredient, we show that the $\X_\eps$ as defined in \eqref{e:defLift} are tight in 
$\cC_g^\alpha$ for suitable $\alpha$.

\begin{proposition}\label{prop:tight}
For any $k \ge 1$ there exists a constant $C$ such that, for any $[s,t] \subset [0,1]$,
one has the bound
\begin{equ}
\E \abs{\X^{(k)}_\eps (s,t)}^2 \le C |t-s|^{k/2}\;,
\end{equ}
uniformly over $\eps \in (0,1]$. In particular, the law of $\X^{(k)}_\eps$ is tight in $\cC_g^\alpha$
for any $\alpha < 1/4$.
\end{proposition}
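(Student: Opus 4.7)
The plan is to reduce the variance bound to an application of Theorem~\ref{thm:mainTechnicalfbm}(1) via the moment formula \eqref{e:moments}, and then conclude tightness by a Kolmogorov criterion together with equivalence of moments on a fixed Wiener chaos.

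First, by translation invariance of $K_\eps$, it suffices to bound $\E|\X^{(k)}_\eps(0,T)|^2$ with $T = |t-s| \le 1$. Expanding $|\X^{(k)}_\eps(0,T)|^2 = \sum_{I\in\{1,\ldots,m\}^k}\scal{\X^{(k)}_\eps(0,T),e_I}^2$ and applying \eqref{e:moments} with $n=2$, $k_1=k_2=k$, $s_i=0$, $t_i=T$, $I_1=I_2=I$, one writes $\E|\X^{(k)}_\eps(0,T)|^2$ as a finite sum of terms $J_\eps(0,T)(P,G,\emptyset)$ indexed by multiindices $I$ and pairings $G \in \CP^I$ of $P^\circ$, where $P = P[k] \sqcup P[k]$ so that $|P^\circ| = 2k$ and $|G| = k$. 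The number of summands depends only on $k$ and $m$.

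Since $\int |K_\eps| \to \infty$ as $\eps \to 0$, individual summands cannot be controlled by absolute values. Instead, I would apply the integration-by-parts operator $\CI^\infty$ from Proposition~\ref{prop:Integration} to each $(P,G,\emptyset)$, which preserves $J_\eps(0,T)$ and produces a bounded $\Z$-linear combination of graphs of the form $(\hat P, \emptyset, E)$. The crucial bookkeeping step is that each application of $\CI$ strictly decreases $|G|$ by one while removing exactly one vertex from $P^\circ$; since $G$ is a pairing with $|G|=k$, every resulting graph satisfies $|\hat P^\circ| = 2k - k = k$. Theorem~\ref{thm:mainTechnicalfbm}(1) then yields $|J_\eps(0,T)(\hat P, \emptyset, E)| \le C T^{k/2}$ uniformly in $\eps$, and summing the finitely many contributions delivers the desired bound $\E|\X^{(k)}_\eps(s,t)|^2 \le C|t-s|^{k/2}$. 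The one point requiring care is precisely this vertex count matching the target exponent $k/2$: any excess would degrade the resulting Hölder regularity below the $1/4$ threshold.

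For tightness, since $\X^{(k)}_\eps(s,t)$ lies in the (inhomogeneous) Wiener chaos of order $k$, equivalence of moments upgrades the second-moment bound to $\E|\X^{(k)}_\eps(s,t)|^{2p} \lesssim_p |t-s|^{pk/2}$ for every $p \ge 1$. Since $\X_\eps$ is the canonical smooth lift of $\xi_\eps$ and hence automatically satisfies Chen's relation with values in the step-$\lfloor 1/\alpha \rfloor$ free nilpotent group, a standard Kolmogorov-type criterion for rough paths (see e.g.\ \cite{Book}) gives uniform moment bounds on $\X_\eps$ in the $\alpha'$-Hölder rough path norm for every $\alpha' < 1/4$. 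Compactness of the embedding $\cC_g^{\alpha'} \hookrightarrow \cC_g^\alpha$ for $\alpha < \alpha'$ then yields the claimed tightness.
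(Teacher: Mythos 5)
Your proposal is correct and follows essentially the same argument as the paper: reduce to $s=0$, $t=T$ by stationarity, expand the second moment via \eqref{e:moments}, apply $\CI^\infty$ to convert each pairing graph $(P,G,\emptyset)$ into a $\Z$-linear combination of graphs $(\hat P,\emptyset,E)$ with $|\hat P^\circ|=k$, and invoke Theorem~\ref{thm:mainTechnicalfbm}(1). The vertex-count bookkeeping (that $\CI$ removes one vertex and one unoriented edge per step, landing on $|\hat P^\circ| = 2k - k = k$) is exactly the point the paper asserts, and your tightness paragraph supplies the standard chaos-equivalence plus Kolmogorov argument that the paper leaves implicit.
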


\begin{proof}
Let $P$ be the poset consisting of two copies of $\{0,\ldots,k+1\}$.
By \eqref{e:moments}, for any multiindex $I$ of length $k$, one has a collection 
$\CP^I$ of pairings of $P^\circ$ such that
\begin{equ}
\E \abs{\scal{\X^{(k)}_\eps (s,t),e_I}}^2 
= \sum_{G \in \CP^I} J_{\eps}(s,t)(P,G,\emptyset)
= \sum_{G \in \CP^I} J_{\eps}(s,t)\big(\CI^\infty(P,G,\emptyset)\big)\;.
\end{equ}
Since all the summands appearing in $\CI^\infty(P,G,\emptyset)$
are of the form $(\hat P,\emptyset,E)$ with $(E, \emptyset) \in \CG(\hat P)$
and $|\hat P^\circ| = k$, the claim follows from the first claim of Theorem~\ref{thm:mainTechnicalfbm}
(note that, by stationarity of increments, one can set $s=0$ and $t=T$ without loss of generality).
\end{proof}

As a next step, we show that the components of $\X_\eps^{(2)}$ converge in law
to independent Wiener processes $W_{ij}$ subject to the antisymmetry condition
$W_{ji} = -W_{ij}$. Here and below, given a function $X$ of one variable, we write $\delta X$ as a 
shorthand for the increment process $\delta X (s,t) = X(t) - X(s)$.

\begin{proposition}
Let $W_{ij}$ for $i< j$ with $i,j \in \{1,\ldots,m\}$ be i.i.d.\ standard Wiener 
processes, let $W_{ji} = -W_{ij}$, and set $W_{ii} = 0$. Then, for any $\alpha \in (1/5,1/4)$,
$\X_\eps$ converges in law in $\cC_g^\alpha$
to the limit $\X$ given as follows. One has $\X^{(1)} = 0$, $\X^{(3)} = 0$,
$\X^{(2)}_{ij}(s,t) = \sigma \delta W_{ij}(s,t)$, and 
$\X^{(4)}_{ijk\ell}(s,t)= \sigma^2 \int_s^t \delta W_{ij}(s,r)\circ dW_{k\ell}(r)$.
Here, $\sigma = \sqrt c$ with $c$ as in \eqref{e:intKbar}.
\end{proposition}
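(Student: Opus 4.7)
The plan is to combine the tightness from Proposition~\ref{prop:tight} with convergence of finite-dimensional distributions of the first four iterated integrals. Since $\alpha > 1/5$, the higher levels of a geometric rough path in $\cC_g^\alpha$ are uniquely reconstructed from the first four via Chen's relation and the Lyons extension theorem, so this reduction suffices to establish convergence in law in $\cC_g^\alpha$.

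First I would treat the odd levels $k \in \{1,3\}$. By the moment formula \eqref{e:moments} and Proposition~\ref{prop:convMoments}, the only pairings $G$ on $|P^\circ| = 2k$ vertices giving a non-vanishing limit are the parallel ones, and parallelism requires $|G| = k$ to be even. Since this fails for $k \in \{1,3\}$, every limiting second moment vanishes; together with the uniform moment bounds of Proposition~\ref{prop:tight} and equivalence of moments in fixed Wiener chaos, this yields $\X_\eps^{(k)} \to 0$ in probability in the relevant Hölder norm. For level two I would verify the hypotheses of the multivariate Peccati--Tudor fourth moment theorem applied to the components $\X_{\eps,ij}^{(2)}(s_a,t_a)$ for finitely many test increments. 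Proposition~\ref{prop:convMoments} applied to pairings on $|P^\circ| = 4$ identifies exactly two parallel pairings: the uncrossed one, contributing $+c|t-s|$ when $(i,j) = (k,\ell)$, and the crossed one, contributing $-c|t-s|$ when $(i,j) = (\ell,k)$. This matches precisely the covariance of $\sigma\,\delta W_{ij}(s,t)$ with the antisymmetry $W_{ji} = -W_{ij}$ and $\sigma^2 = c$. The vanishing of fourth cumulants follows from the diagram formula and Theorem~\ref{thm:mainTechnicalfbm}, since the only connected pairings on the eight vertices of four copies of $P[2]$ form single four-cycles which, after applying $\CI^\infty$, are either non-full or unordered.

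The main obstacle is level four, whose limit $\sigma^2\int_s^t \delta W_{ij}(s,r)\circ dW_{k\ell}(r)$ lies in the zeroth and second Wiener chaoses of the \emph{new} process $W$ rather than a fixed chaos of $B$, so a direct fourth moment argument does not apply. My plan is to chaos-decompose $\X_\eps^{(4)}(s,t)$ into its projections onto the fourth, second, and zeroth Wiener chaoses of $B$ and treat each piece separately. The fourth-chaos component is shown to vanish by the same mechanism as in the level-two step: all the relevant connected pairings collapse under $\CI^\infty$ to configurations which are not full-and-ordered, so Theorem~\ref{thm:mainTechnicalfbm} kills their contributions. The zeroth-chaos component is the expectation, whose limit is identified by Proposition~\ref{prop:convMoments} applied to the unique parallel pairing that contracts $(r_1,r_2)$ with $(r_3,r_4)$; this reproduces the Itô--Stratonovich correction term built into the symbolic integral in the limit.

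The delicate part is the second-chaos component of $\X_\eps^{(4)}$. Here I would insert the cutoff $\chi_\delta$ from Section~\ref{sec:toolConv} to split the time-ordered integration into configurations where some pair of adjacent variables is close at scale $\delta$ while the remaining two are far, and apply Theorem~\ref{theo:boundepsdelta} to show that every configuration other than ``one adjacent pair close, the other pair well separated'' contributes $o(\eps) + o(\delta)$. The surviving configuration may, for each fixed $\delta$, be rewritten explicitly as a Riemann-type sum expressing a mollified stochastic integral of $\X_{\eps,k\ell}^{(2)}$ against $\X_{\eps,ij}^{(2)}$. Taking $\eps \to 0$ first gives, via the stable convergence of $\X_\eps^{(2)}$ to $\sigma W$ (with $W$ independent of $B$, as in Proposition~\ref{prop:convNoise}), a Stratonovich integral against $W$ at mollification scale $\delta$; taking $\delta \to 0$ and using continuity of Stratonovich integration along a Wiener process yields the claimed limit. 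Joint convergence with the lower levels is automatic because this approximation is expressed as a continuous functional of $\X_\eps^{(2)}$ up to errors that vanish uniformly in $\eps$. The single hardest step is this final identification of the second-chaos piece: one must control the $\eps \to 0$ and $\delta \to 0$ limits simultaneously, which is exactly what the joint $(\eps,\delta)$ estimates of Section~\ref{sec:toolConv} were designed to make possible.
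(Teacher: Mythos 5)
Your handling of levels $1$, $2$, and $3$ is essentially the paper's argument: tightness from Proposition~\ref{prop:tight}, vanishing of odd levels because parallel pairings (equivalently, full configurations after $\CI^\infty$) require an even number of central vertices, and the fourth-moment theorem at level two with the covariance identified via Proposition~\ref{prop:convMoments}. That part is fine.

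The level-four plan, however, contains a genuine error. You propose to chaos-decompose $\X^{(4)}_\eps$ into its projections onto the fourth, second, and zeroth $\eta$-Wiener chaoses and claim that ``the fourth-chaos component is shown to vanish.'' It does not. Take $i,j,k,\ell$ pairwise distinct: then $\X^{(4)}_{\eps;ijk\ell}$ is a pure fourth-chaos element of $\eta$ (no Wick contractions are possible), yet its limit $\sigma^2\int_s^t\delta W_{ij}\circ dW_{k\ell}$ has variance $\tfrac{\sigma^4}{2}(t-s)^2 > 0$. Concretely, in the pairing picture for $\E|\X^{(4)}_{\eps;ijk\ell}|^2$ on $P = P[4]\sqcup P[4]$, the unique index-compatible pairing (vertex $a$ of one copy with vertex $a$ of the other) \emph{is} parallel with two uncrossed pairs, so Proposition~\ref{prop:convMoments} gives a nonzero limit. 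The deeper issue is the conflation of chaos in $\eta$ with chaos in $W$: because $W$ arises in a stable limit and becomes asymptotically independent of $\eta$, the second $W$-chaos part of the target (which is nondegenerate) has no reason to be carried by the second $\eta$-chaos part of $\X^{(4)}_\eps$ — in fact it is carried precisely by the fourth $\eta$-chaos. Your decomposition therefore mis-allocates the nontrivial part of the limit to a term you have (erroneously) shown to be negligible.

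A secondary, smaller error: the expectation is not produced by ``the unique parallel pairing that contracts $(r_1,r_2)$ with $(r_3,r_4)$.'' On $\{1,2,3,4\}$ the pairing $\{\{1,2\},\{3,4\}\}$ is not parallel; the two parallel pairings are $\{\{1,3\},\{2,4\}\}$ (uncrossed) and $\{\{1,4\},\{2,3\}\}$ (crossed), and it is their difference that yields the correct $\tfrac{c}{2}(t-s)(\delta_{ik}\delta_{j\ell}-\delta_{i\ell}\delta_{jk})$.

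What the paper actually does at level four avoids chaos decomposition entirely. It first applies Skorokhod to upgrade level-two convergence to convergence in probability, then exploits shuffle and antisymmetry identities (eqs.~\eqref{e:doubleIndex}, \eqref{e:antisym}, \eqref{e:idenIntegralEps}) to reduce the general $\X^{(4)}_{\eps;ijk\ell}$ to the quantity $\B^{(2)}_\eps$, which is by construction an iterated integral of $\X^{(2)}_\eps$ against itself. The cutoff $\B^{(2)}_{\eps,\delta}$ is then a \emph{continuous} functional of $\X^{(2)}_\eps$ at each fixed $\delta$, giving the $\eps\to 0$ limit for free, and the three-term triangle inequality combined with the uniform bound $\E\|\tilde\B^{(2)}_{\eps,\delta}\|^2\le o(\eps)+o(\delta)$ from Theorem~\ref{theo:boundepsdelta} closes the commutation of limits. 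Your instinct that Section~\ref{sec:toolConv} is the key is right, but it must be applied to the full $\X^{(4)}_\eps$ (via $\B^{(2)}_\eps$), not merely to one chaos slice of it.
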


\begin{remark}
In fact, the Wiener processes $W_{ij}$ are independent of the fractional Brownian motions
$B_i$ in the sense of stable convergence \cite{StableMall}.
\end{remark}

\begin{proof}
Since we already have tightness by Proposition~\ref{prop:tight}, it
suffices to show convergence of finite-dimensional distributions.
The fact that $\lim_{\eps \to 0}\E \abs{\X_\eps^{(k)}(s,t)}^2 = 0$ for any
$s < t$ and $k$ odd follows immediately from Theorem~\ref{thm:mainTechnicalfbm}
since this quantity can be written as a finite linear combination of terms of the type
$J(s,t)(P,\emptyset,E)$ with $|P^\circ| = k$ and $E$ cannot be full unless $k$ is
even.

Regarding $\X^{(2)}$, the special case $m=2$ was treated in \cite{unterberger2008central},
but since we need the general case (which doesn't appear to follow in a straightforward way)
and since our proof is much shorter we give it here.
Our strategy is to apply the fourth moment theorem \cite{FourthMoment}
which states that, when considering random variables belonging to a Wiener 
chaos of finite order, convergence of covariances and fourth mixed moments is sufficient
to guarantee convergence of finite-dimensional distributions to a Gaussian limit.
This however is a relatively straightforward consequence of Proposition~\ref{prop:convMoments}.
Indeed, $\E \X^{(2)}_{\eps;ij}(s_1,t_1)\X^{(2)}_{\eps;k\ell}(s_2,t_2)$ is 
given by $J_\eps(s,t)(g)$ where, pictorially, $g = (P,G,\emptyset)$ is given by
\begin{equ}[e:CovarianceX2]
g = 
\delta_{ik}\delta_{j\ell}
\begin{tikzpicture}[baseline=1.3cm]
\node[dot,label=below:{$s_1$}] (1) at (0,0.4) {};
\node[dot,label=below:{$s_2$}] (1') at (1,0.4) {};
\node[dot,label=above:{$t_1$}] (4) at (0,2.6) {};
\node[dot,label=above:{$t_2$}] (4') at (1,2.6) {};
\node[dot,label=left:{$i$}] (2) at (0,1) {};
\node[dot,label=left:{$j$}] (3) at (0,2) {};
\node[dot,label=right:{$k$}] (2') at (1,1) {};
\node[dot,label=right:{$\ell$}] (3') at (1,2) {};

\draw[black!15] (1) -- (2) -- (3) -- (4);
\draw[black!15] (1') -- (2') -- (3') -- (4');

\draw[very thick,darkblue] (2) -- (2');
\draw[very thick,darkblue] (3) -- (3');
\end{tikzpicture}
+
\delta_{i\ell}\delta_{jk}
\begin{tikzpicture}[baseline=1.3cm]
\node[dot,label=below:{$s_1$}] (1) at (0,0.4) {};
\node[dot,label=below:{$s_2$}] (1') at (1,0.4) {};
\node[dot,label=above:{$t_1$}] (4) at (0,2.6) {};
\node[dot,label=above:{$t_2$}] (4') at (1,2.6) {};
\node[dot,label=left:{$i$}] (2) at (0,1) {};
\node[dot,label=left:{$j$}] (3) at (0,2) {};
\node[dot,label=right:{$k$}] (2') at (1,1) {};
\node[dot,label=right:{$\ell$}] (3') at (1,2) {};

\draw[black!15] (1) -- (2) -- (3) -- (4);
\draw[black!15] (1') -- (2') -- (3') -- (4');

\draw[very thick,darkblue] (2) -- (3');
\draw[very thick,darkblue] (3) -- (2');
\end{tikzpicture}
+
\delta_{ij}\delta_{k\ell}
\begin{tikzpicture}[baseline=1.3cm]
\node[dot,label=below:{$s_1$}] (1) at (0,0.4) {};
\node[dot,label=below:{$s_2$}] (1') at (1,0.4) {};
\node[dot,label=above:{$t_1$}] (4) at (0,2.6) {};
\node[dot,label=above:{$t_2$}] (4') at (1,2.6) {};
\node[dot,label=left:{$i$}] (2) at (0,1) {};
\node[dot,label=left:{$j$}] (3) at (0,2) {};
\node[dot,label=right:{$k$}] (2') at (1,1) {};
\node[dot,label=right:{$\ell$}] (3') at (1,2) {};

\draw[black!15] (1) -- (2) -- (3) -- (4);
\draw[black!15] (1') -- (2') -- (3') -- (4');

\draw[very thick,darkblue] (2) -- (3);
\draw[very thick,darkblue] (3') -- (2');
\end{tikzpicture}\;.
\end{equ}
The light gray lines denote the Hasse diagram of the poset $P$ while the pairing 
$G$ is shown in blue.
Note that the  pairings for the first two terms are parallel (with the second one
consisting of a crossed pair), while the third one is not, so that, by the 
first equality of Proposition~\ref{prop:convMoments}, one has
$J(s,t)(g) = J(s,t)(g')$ with
\begin{equ}
g' = ( \delta_{i\ell}\delta_{jk} - \delta_{ik}\delta_{j\ell})
\begin{tikzpicture}[baseline=0.6cm]
\node[dot,label=below:{$s_1$}] (1) at (0,0) {};
\node[dot,label=below:{$s_2$}] (1') at (1,0) {};
\node[dot,label=above:{$t_1$}] (4) at (0,1.4) {};
\node[dot,label=above:{$t_2$}] (4') at (1,1.4) {};
\node[dot] (2) at (0,.7) {};
\node[dot] (2') at (1,.7) {};

\draw[black!15] (1) -- (2) -- (4);
\draw[black!15] (1') -- (2') -- (4');

\draw[thick,lightblue,->] (2) to[bend left=20] (2');
\draw[thick,lightblue,->] (2') to[bend left=20] (2);
\end{tikzpicture}
\end{equ}
where now directed edges in $E$ are indicated as light blue arrows.
It thus follows from the second equality of Proposition~\ref{prop:convMoments} that
for $i \neq j$
\begin{equ}
\lim_{\eps \to 0} \E \X^{(2)}_{\eps;ij}(s_1,t_1)\X^{(2)}_{\eps;k\ell}(s_2,t_2)
= 
\left\{\begin{array}{cl}
	c \abs{[s_1,t_1]\cap [s_2,t_2]} & \text{if $(ij) = (k\ell)$,} \\
	-c \abs{[s_1,t_1]\cap [s_2,t_2]} & \text{if $(ij) = (\ell k)$,} \\
	0 & \text{otherwise,}
\end{array}\right.
\end{equ} 
with the limit also vanishing when $i=j$,
which is precisely the covariance structure described in the claim.

To complete the proof of Gaussianity of the limit, it remains to show that all 
joint cumulants of order $4$ of terms of the type $\X^{(2)}_{\eps;ij}(s,t)$
vanish as $\eps \to 0$. For this, let $P_0$ denote the totally ordered poset with $4$
elements. Then, the joint cumulants are given by $J_\eps(s,t)(g)$ where $g$ is a 
linear combination of pairings of $P^\circ$ with $P = P_0 \sqcup P_0 \sqcup P_0 \sqcup P_0$, which 
cannot be written as the disjoint union of two pairings of $(P_0 \sqcup P_0)^\circ$.
Since it is easy to see that every parallel pairing of $P^\circ$ can be written 
as such a disjoint union, the vanishing of the cumulants as $\eps \to 0$
follows from Proposition~\ref{prop:convMoments}.

By Skorokhod's representation theorem, we can (and will) therefore assume from now on
that $\X^{(2)}_{\eps}$ converges in probability in $\CC^\alpha$ to 
$\X^{(2)} = \sigma \delta W$ as described in the statement. 
It remains to show that, for any fixed $s<t$, $\X^{(4)}_\eps$ converges in probability to the second order iterated 
integrals of the Wiener processes $W_{ij}$. 
Note first that one has
\begin{equ}[e:doubleIndex]
\lim_{\eps \to 0} \X^{(4)}_{\eps;iijk}
= \lim_{\eps \to 0} \X^{(4)}_{\eps;jkii} = 0\;,
\end{equ}
for all indices $i,j,k$.
This is an immediate consequence of Proposition~\ref{prop:convMoments}, noting
that for every parallel pairing appearing in the graphical representation
of $\E (\X^{(4)}_{\eps;iijk})^2$ for which the two edges containing the 
vertices associated to the two factors $\eta_{\eps;i}$ appearing in one of the 
two copies of  $\X^{(4)}_{\eps;iijk}$ are crossed there is a corresponding pairing in which 
they are uncrossed, with both contributions cancelling out.

As a consequence, we also deduce that one has
\begin{equ}
\lim_{\eps \to 0} \big(\X^{(4)}_{\eps;ijij} - \f12(\X^{(2)}_{\eps;ij})^2\big) = 0\;,
\end{equ}
so that, as a consequence of the convergence of $\X^{(2)}_{\eps}$, one has
indeed 
$\lim_{\eps \to 0} \X^{(4)}_{\eps;ijij} = \X^{(4)}_{ijij}
\eqdef \f{\sigma^2}2 (\delta W_{ij})^2 = \sigma^2 \int_s^t \delta W_{ij}(s,r)\circ dW_{ij}(r)$
This is an immediate consequence of the shuffle identity
\begin{equ}
(\X^{(2)}_{\eps;ij})^2
= 2\X^{(4)}_{\eps;ijij} + 4\X^{(4)}_{\eps;iijj}\;,
\end{equ}
combined with \eqref{e:doubleIndex}.

We also note that one has the identities 
\begin{equ}[e:antisym]
\lim_{\eps \to 0} \big(\X^{(4)}_{\eps;ijk\ell}(s,t)
+ \X^{(4)}_{\eps;jik\ell}(s,t)\big) = 
\lim_{\eps \to 0} \big(\X^{(4)}_{\eps;k\ell ij}(s,t)
+ \X^{(4)}_{\eps;k\ell ji}(s,t)\big) = 0\;,
\end{equ}
which can be shown in virtually the same way as \eqref{e:doubleIndex}.
As a consequence, it remains to show that $\X^{(4)}_{\eps;ijk\ell}(s,t)$ has the claimed
limit in the cases when $i,j,k,\ell$ are all distinct or when $j=k$ and $i,j,\ell$ are distinct.

Note now that if we set $\B^{(1)}_\eps = \X^{(2)}_\eps$ and define
\begin{equ}
\B^{(2)}_\eps(s,t) = \int_s^t \B^{(1)}_\eps(s,r)\otimes \d_r \B^{(1)}_\eps(s,r)\,dr\;,
\end{equ}
then one has the shuffle identity
\begin{equ}[e:idenIntegralEps]
\B^{(2)}_{\eps; ijk\ell} = \X^{(4)}_{\eps; ijk\ell} + \X^{(4)}_{\eps; ikj\ell} + \X^{(4)}_{\eps; kij\ell}\;.
\end{equ}
As a consequence of \eqref{e:antisym} the last two terms cancel out in the 
limit $\eps \to 0$ so that it remains to show that 
$\B^{(2)}_{\eps; ijk\ell}$ converges to $\sigma^2$ times the Stratonovich integral of $W_{ij}$ against $W_{k\ell}$. The reason for considering $\B^{(2)}_{\eps}$ rather than simply
$\X^{(4)}_{\eps}$ is that the former is already of the form of an integral of two processes 
converging towards Wiener processes against each other.

Let now $\chi_\delta$ be as in Section~\ref{sec:toolConv} and let
\begin{equ}[e:defMollifiedB2]
\B^{(2)}_{\eps,\delta}(s,t) = \int_s^t \int_s^r\chi_\delta(r-u) \d_u\B^{(1)}_\eps(s,u)\otimes \d_r \B^{(1)}_\eps(s,r)\,du\,dr\;.
\end{equ}
It is straightforward to verify the following properties (we fix $s,t$ throughout).
\begin{enumerate}
\item For any fixed $\delta > 0$, one has $\lim_{\eps \to 0} \B^{(2)}_{\eps,\delta}
= \tilde \B^{(2)}_{\delta}$, where
\begin{equ}
\tilde \B^{(2)}_{\delta}(s,t) = \int_s^t \int_s^r\chi_\delta(r-u)\, dW(u)\otimes dW(r)\;.
\end{equ}
This is a consequence of the convergence $\B_\eps^{(1)} \to W$ in $\CC^\alpha$  
obtained in the first part, combined with the fact that the inner integral in 
\eqref{e:defMollifiedB2} yields a smooth function of $r$ for every continuous function
$\B_\eps^{(1)}$, so that the map $\B_\eps^{(1)} \mapsto \B_{\eps,\delta}^{(2)}$ is
continuous on $\CC^\alpha$.
\item One has $\B^{(2)} \eqdef \lim_{\delta \to 0} \tilde \B^{(2)}_{\delta} = \X^{(4)}$ with 
$\X^{(4)}$ as described in the statement of the proposition. 
\end{enumerate}
In order to conclude, it thus remains to show that
the convergence $\lim_{\delta \to 0} \B^{(2)}_{\eps,\delta} = \B^{(2)}_{\eps}$ 
has good enough uniformity properties. In fact, we will show the sufficient fact that
there exist a function $o \colon (0,1] \to (0,1]$ with $\lim_{\eps \to 0} o(\eps) = 0$
such that
\begin{equ}[e:wantedBound]
 \E \|\tilde \B^{(2)}_{\eps,\delta}(s,t)\bigr\|^2 \le o(\eps) + o(\delta)\;,
\end{equ}
where $\tilde \B^{(2)}_{\eps,\delta} = \B^{(2)}_{\eps} - \B^{(2)}_{\eps,\delta}$ is given by the 
same expression as \eqref{e:defMollifiedB2}, but with $\chi_\delta$ replaced by $1-\chi_\delta$.

Before we proceed, let us just show how \eqref{e:wantedBound} implies the claim. For every $\delta > 0$ one has (all processes being evaluated at a fixed pair of times $(s,t)$):
\begin{equ}
 \E \|\X^{(4)} - \B^{(2)}_{\eps}\|^2
 \lesssim  \E \|\X^{(4)} - \tilde \B^{(2)}_{\delta}\|^2 + \E \|\tilde \B^{(2)}_{\delta} - \B^{(2)}_{\eps,\delta}\|^2 +  \E \|\tilde \B^{(2)}_{\eps,\delta}\|^2
\end{equ}
Given some $\bar \eps > 0$, we first choose $\delta$ small enough so that 
$\E \|\X^{(4)} - \tilde \B^{(2)}_{\delta}\|^2 \le \bar \eps$ (which is 
possible by the first property) and $o(\delta) < \bar \eps$.
We then choose $\eps$ small enough so that $o(\eps) < \bar \eps$ and 
$\E \|\tilde \B^{(2)}_{\delta} - \B^{(2)}_{\eps,\delta}\|^2 \le \bar \eps$ (which 
is possible by the second property), so that the 
right hand side of the displayed equation is smaller than $4\bar \eps$.

In the notations of Section~\ref{sec:toolConv}, it follows similarly to
\eqref{e:idenIntegralEps} that one has the identity
\begin{equ}
\tilde\B^{(2)}_{\eps,\delta; ijk\ell} = \X^{(4;24)}_{\eps,\delta; ijk\ell} + \X^{(4;34)}_{\eps; ikj\ell} + \X^{(4;34)}_{\eps; kij\ell}\;,
\end{equ}
where we set
\begin{equ}
\X^{(4;uv)}_{\eps,\delta}
= \int_s^t \!\int_s^{r_4}\!\int_s^{r_3} \!\int_s^{r_2}  (1-\chi_\delta)(r_u-r_v)\,\xi_\eps(r_1)\otimes \ldots \otimes \xi_\eps(r_k)\,dr_1\ldots dr_4\;.
\end{equ}

It follows that, for $i,j,k,\ell$ all distinct, one has the identity (with the arguments
$(s,t)$ fixed and omitted for clarity)
\begin{equs}
\E |\tilde \B^{(2)}_{\eps,\delta;ijk\ell}|^2
&\le 
2\E |\X^{(4;24)}_{\eps,\delta; ijk\ell}|^2
+
2\E |\X^{(4;34)}_{\eps; ikj\ell} + \X^{(4;34s)}_{\eps; kij\ell}|^2 \\
&= 
J_{\eps,\delta} \left(
2
\begin{tikzpicture}[baseline=1.15cm]
\node[dot] (1) at (0,0) {};
\node[dot] (1') at (1,0) {};
\node[dot] (6) at (0,2.5) {};
\node[dot] (6') at (1,2.5) {};
\node[dot] (2) at (0,0.5) {};
\node[dot] (3) at (0,1) {};
\node[dot] (4) at (0,1.5) {};
\node[dot] (5) at (0,2) {};
\node[dot] (2') at (1,0.5) {};
\node[dot] (3') at (1,1) {};
\node[dot] (4') at (1,1.5) {};
\node[dot] (5') at (1,2) {};

\draw[black!15] (1) -- (2) -- (3) -- (4) -- (5) -- (6);
\draw[black!15] (1') -- (2') -- (3') -- (4') -- (5') -- (6');

\draw[very thick,darkblue] (2) -- (2');
\draw[very thick,darkblue] (3) -- (3');
\draw[very thick,darkblue] (4) -- (4');
\draw[very thick,darkblue] (5) -- (5');

\draw[thick,darkred] (3) to[bend left=30] (5);
\draw[thick,darkred] (3') to[bend right=30] (5');

\end{tikzpicture}
+
4
\begin{tikzpicture}[baseline=1.15cm]
\node[dot] (1) at (0,0) {};
\node[dot] (1') at (1,0) {};
\node[dot] (6) at (0,2.5) {};
\node[dot] (6') at (1,2.5) {};
\node[dot] (2) at (0,0.5) {};
\node[dot] (3) at (0,1) {};
\node[dot] (4) at (0,1.5) {};
\node[dot] (5) at (0,2) {};
\node[dot] (2') at (1,0.5) {};
\node[dot] (3') at (1,1) {};
\node[dot] (4') at (1,1.5) {};
\node[dot] (5') at (1,2) {};

\draw[black!15] (1) -- (2) -- (3) -- (4) -- (5) -- (6);
\draw[black!15] (1') -- (2') -- (3') -- (4') -- (5') -- (6');

\draw[very thick,darkblue] (2) -- (2');
\draw[very thick,darkblue] (3) -- (3');
\draw[very thick,darkblue] (4) -- (4');
\draw[very thick,darkblue] (5) -- (5');

\draw[thick,darkred] (4) to[bend left=30] (5);
\draw[thick,darkred] (4') to[bend right=30] (5');
\end{tikzpicture}
+
4
\begin{tikzpicture}[baseline=1.15cm]
\node[dot] (1) at (0,0) {};
\node[dot] (1') at (1,0) {};
\node[dot] (6) at (0,2.5) {};
\node[dot] (6') at (1,2.5) {};
\node[dot] (2) at (0,0.5) {};
\node[dot] (3) at (0,1) {};
\node[dot] (4) at (0,1.5) {};
\node[dot] (5) at (0,2) {};
\node[dot] (2') at (1,0.5) {};
\node[dot] (3') at (1,1) {};
\node[dot] (4') at (1,1.5) {};
\node[dot] (5') at (1,2) {};

\draw[black!15] (1) -- (2) -- (3) -- (4) -- (5) -- (6);
\draw[black!15] (1') -- (2') -- (3') -- (4') -- (5') -- (6');

\draw[very thick,darkblue] (2) -- (3');
\draw[very thick,darkblue] (3) -- (2');
\draw[very thick,darkblue] (4) -- (4');
\draw[very thick,darkblue] (5) -- (5');

\draw[thick,darkred] (4) to[bend left=30] (5);
\draw[thick,darkred] (4') to[bend right=30] (5');
\end{tikzpicture}
\right)\label{e:mainTermsDelta}
\end{equs}
where edges in $G_K$ are drawn in blue as before while edges in
$G_\chi$ are drawn in red.
One furthermore has $\E |\tilde \B^{(2)}_{\eps,\delta;ijj\ell}|^2
\le \E |\tilde \B^{(2)}_{\eps,\delta;ijk\ell}|^2
+ 
J_{\eps,\delta}(g)$ with
\begin{equ}[e:extraTermsDelta]
g = 2
\begin{tikzpicture}[baseline=1.15cm]
\node[dot] (1) at (0,0) {};
\node[dot] (1') at (1,0) {};
\node[dot] (6) at (0,2.5) {};
\node[dot] (6') at (1,2.5) {};
\node[dot] (2) at (0,0.5) {};
\node[dot] (3) at (0,1) {};
\node[dot] (4) at (0,1.5) {};
\node[dot] (5) at (0,2) {};
\node[dot] (2') at (1,0.5) {};
\node[dot] (3') at (1,1) {};
\node[dot] (4') at (1,1.5) {};
\node[dot] (5') at (1,2) {};

\draw[black!15] (1) -- (2) -- (3) -- (4) -- (5) -- (6);
\draw[black!15] (1') -- (2') -- (3') -- (4') -- (5') -- (6');

\draw[very thick,darkblue] (2) -- (2');
\draw[very thick,darkblue] (3) -- (4);
\draw[very thick,darkblue] (3') -- (4');
\draw[very thick,darkblue] (5) -- (5');

\draw[thick,darkred] (3) to[bend left=30] (5);
\draw[thick,darkred] (3') to[bend right=30] (5');

\end{tikzpicture}
+
2
\begin{tikzpicture}[baseline=1.15cm]
\node[dot] (1) at (0,0) {};
\node[dot] (1') at (1,0) {};
\node[dot] (6) at (0,2.5) {};
\node[dot] (6') at (1,2.5) {};
\node[dot] (2) at (0,0.5) {};
\node[dot] (3) at (0,1) {};
\node[dot] (4) at (0,1.5) {};
\node[dot] (5) at (0,2) {};
\node[dot] (2') at (1,0.5) {};
\node[dot] (3') at (1,1) {};
\node[dot] (4') at (1,1.5) {};
\node[dot] (5') at (1,2) {};

\draw[black!15] (1) -- (2) -- (3) -- (4) -- (5) -- (6);
\draw[black!15] (1') -- (2') -- (3') -- (4') -- (5') -- (6');

\draw[very thick,darkblue] (2) -- (2');
\draw[very thick,darkblue] (3) -- (4);
\draw[very thick,darkblue] (3') -- (4');
\draw[very thick,darkblue] (5) -- (5');

\draw[thick,darkred] (4) to[bend left=30] (5);
\draw[thick,darkred] (4') to[bend right=30] (5');
\end{tikzpicture}
+
2
\begin{tikzpicture}[baseline=1.15cm]
\node[dot] (1) at (0,0) {};
\node[dot] (1') at (1,0) {};
\node[dot] (6) at (0,2.5) {};
\node[dot] (6') at (1,2.5) {};
\node[dot] (2) at (0,0.5) {};
\node[dot] (3) at (0,1) {};
\node[dot] (4) at (0,1.5) {};
\node[dot] (5) at (0,2) {};
\node[dot] (2') at (1,0.5) {};
\node[dot] (3') at (1,1) {};
\node[dot] (4') at (1,1.5) {};
\node[dot] (5') at (1,2) {};

\draw[black!15] (1) -- (2) -- (3) -- (4) -- (5) -- (6);
\draw[black!15] (1') -- (2') -- (3') -- (4') -- (5') -- (6');

\draw[very thick,darkblue] (2) to[bend left=30] (4);
\draw[very thick,darkblue] (3) -- (3');
\draw[very thick,darkblue] (2') to[bend right=30] (4');
\draw[very thick,darkblue] (5) -- (5');

\draw[thick,darkred] (4) to[bend left=30] (5);
\draw[thick,darkred] (4') to[bend right=30] (5');
\end{tikzpicture}
+
4
\begin{tikzpicture}[baseline=1.15cm]
\node[dot] (1) at (0,0) {};
\node[dot] (1') at (1,0) {};
\node[dot] (6) at (0,2.5) {};
\node[dot] (6') at (1,2.5) {};
\node[dot] (2) at (0,0.5) {};
\node[dot] (3) at (0,1) {};
\node[dot] (4) at (0,1.5) {};
\node[dot] (5) at (0,2) {};
\node[dot] (2') at (1,0.5) {};
\node[dot] (3') at (1,1) {};
\node[dot] (4') at (1,1.5) {};
\node[dot] (5') at (1,2) {};

\draw[black!15] (1) -- (2) -- (3) -- (4) -- (5) -- (6);
\draw[black!15] (1') -- (2') -- (3') -- (4') -- (5') -- (6');

\draw[very thick,darkblue] (2) to[bend left=30] (4);
\draw[very thick,darkblue] (3) -- (2');
\draw[very thick,darkblue] (3') to[bend right=30] (4');
\draw[very thick,darkblue] (5) -- (5');

\draw[thick,darkred] (4) to[bend left=30] (5);
\draw[thick,darkred] (4') to[bend right=30] (5');
\end{tikzpicture}\;.
\end{equ}
We will show that all the terms appearing in \eqref{e:mainTermsDelta}
and \eqref{e:extraTermsDelta} are bounded by $o(\eps) + o(\delta)$, except for
the last two terms in \eqref{e:mainTermsDelta} that have to be combined in order
to take advantage of cancellations.

It follows immediately that the terms in \eqref{e:extraTermsDelta}
are all bounded by $o(\eps)$ since no cycle with two elements of $E_\chi$ can be generated by
successive applications of $\CJ$.
Regarding \eqref{e:mainTermsDelta}, the first term is bounded by $o(\delta) + o(\eps)$ since the 
only ``full'' term that can be obtained by successive applications of $\CJ$ is
\begin{equ}
\begin{tikzpicture}[baseline=0.6cm]
\node[dot] (1) at (0,0) {};
\node[dot] (1') at (1,0) {};
\node[dot] (4) at (0,1.4) {};
\node[dot] (4') at (1,1.4) {};
\node[dot] (2) at (0,.4) {};
\node[dot] (2') at (1,.4) {};
\node[dot] (3) at (0,1) {};
\node[dot] (3') at (1,1) {};

\draw[black!15] (1) -- (2) -- (3) -- (4);
\draw[black!15] (1') -- (2') -- (3') -- (4');

\draw[thick,lightblue,->] (2) to[bend left=20] (2');
\draw[thick,lightblue,->] (2') to[bend left=20] (2);
\draw[thick,lightblue,->] (3) to[bend left=20] (3');
\draw[thick,lightblue,->] (3') to[bend left=20] (3);

\draw[thick,darkred] (2) to[bend left=30] (3);
\draw[thick,darkred] (2') to[bend right=30] (3');
\end{tikzpicture}\;,
\end{equ}
which yields a contribution at most $o(\delta)$ by Theorem~\ref{theo:boundepsdelta}.

It remains to deal with the last two terms in \eqref{e:mainTermsDelta}. It turns out that 
these terms are \textit{not} separately bounded by $o(\eps)+o(\delta)$, but their sum is due
to cancellations. This is because, if we choose our fixed total order on the vertices 
such that the smallest vertex is the second one from the bottom of the left column, 
then one has
\begin{equ}
\CJ \left(
\begin{tikzpicture}[baseline=1.15cm]
\node[dot] (1) at (0,0) {};
\node[dot] (1') at (1,0) {};
\node[dot] (6) at (0,2.5) {};
\node[dot] (6') at (1,2.5) {};
\node[dot] (2) at (0,0.5) {};
\node[dot] (3) at (0,1) {};
\node[dot] (4) at (0,1.5) {};
\node[dot] (5) at (0,2) {};
\node[dot] (2') at (1,0.5) {};
\node[dot] (3') at (1,1) {};
\node[dot] (4') at (1,1.5) {};
\node[dot] (5') at (1,2) {};

\draw[black!15] (1) -- (2) -- (3) -- (4) -- (5) -- (6);
\draw[black!15] (1') -- (2') -- (3') -- (4') -- (5') -- (6');

\draw[very thick,darkblue] (2) -- (2');
\draw[very thick,darkblue] (3) -- (3');
\draw[very thick,darkblue] (4) -- (4');
\draw[very thick,darkblue] (5) -- (5');

\draw[thick,darkred] (4) to[bend left=30] (5);
\draw[thick,darkred] (4') to[bend right=30] (5');
\end{tikzpicture}
+
\begin{tikzpicture}[baseline=1.15cm]
\node[dot] (1) at (0,0) {};
\node[dot] (1') at (1,0) {};
\node[dot] (6) at (0,2.5) {};
\node[dot] (6') at (1,2.5) {};
\node[dot] (2) at (0,0.5) {};
\node[dot] (3) at (0,1) {};
\node[dot] (4) at (0,1.5) {};
\node[dot] (5) at (0,2) {};
\node[dot] (2') at (1,0.5) {};
\node[dot] (3') at (1,1) {};
\node[dot] (4') at (1,1.5) {};
\node[dot] (5') at (1,2) {};

\draw[black!15] (1) -- (2) -- (3) -- (4) -- (5) -- (6);
\draw[black!15] (1') -- (2') -- (3') -- (4') -- (5') -- (6');

\draw[very thick,darkblue] (2) -- (3');
\draw[very thick,darkblue] (3) -- (2');
\draw[very thick,darkblue] (4) -- (4');
\draw[very thick,darkblue] (5) -- (5');

\draw[thick,darkred] (4) to[bend left=30] (5);
\draw[thick,darkred] (4') to[bend right=30] (5');
\end{tikzpicture}
\right)
=
\begin{tikzpicture}[baseline=1.15cm]
\node[dot] (1) at (0,0) {};
\node[dot] (1') at (1,0) {};
\node[dot] (6) at (0,2.5) {};
\node[dot] (6') at (1,2.5) {};
\node[dot] (2) at (0,0.5) {};
\node[dot] (3) at (0,1) {};
\node[dot] (4) at (0,1.5) {};
\node[dot] (5) at (0,2) {};
\node[dot] (2') at (1,0.75) {};
\node[dot] (4') at (1,1.5) {};
\node[dot] (5') at (1,2) {};

\draw[black!15] (1) -- (2) -- (3) -- (4) -- (5) -- (6);
\draw[black!15] (1') -- (2') -- (4') -- (5') -- (6');

\draw[very thick,darkblue] (3) -- (2');
\draw[thick,->,lightblue] (2) -- (4');
\draw[very thick,darkblue] (4) -- (4');
\draw[very thick,darkblue] (5) -- (5');

\draw[thick,darkred] (4) to[bend left=30] (5);
\draw[thick,darkred] (4') to[bend right=30] (5');
\end{tikzpicture}
-
\begin{tikzpicture}[baseline=1.15cm]
\node[dot] (1) at (0,0) {};
\node[dot] (1') at (1,0) {};
\node[dot] (6) at (0,2.5) {};
\node[dot] (6') at (1,2.5) {};
\node[dot] (2) at (0,0.5) {};
\node[dot] (3) at (0,1) {};
\node[dot] (4) at (0,1.5) {};
\node[dot] (5) at (0,2) {};
\node[dot] (2') at (1,0.75) {};
\node[dot] (4') at (1,1.5) {};
\node[dot] (5') at (1,2) {};

\draw[black!15] (1) -- (2) -- (3) -- (4) -- (5) -- (6);
\draw[black!15] (1') -- (2') -- (4') -- (5') -- (6');

\draw[thick,lightblue,->] (2) -- (1');
\draw[very thick,darkblue] (3) -- (2');
\draw[very thick,darkblue] (4) -- (4');
\draw[very thick,darkblue] (5) -- (5');

\draw[thick,darkred] (4) to[bend left=30] (5);
\draw[thick,darkred] (4') to[bend right=30] (5');
\end{tikzpicture}
\end{equ}
It is then clear from the definition of $\CJ$ that further applications of $\CJ$ cannot 
lead to a full graph, so that these terms contribute by at most $o(\eps)$ by Theorem~\ref{theo:boundepsdelta}
\end{proof}

\begin{theorem}\label{thm:mainRigorous}
Given an initial condition $x_0 \in \R^d$, 
let $\mu_\eps$ be the law of the maximal solution to \eqref{e:SDEfbm},
viewed as an $(\R^d)^\sol$-valued random variable. Then, $\mu_\eps$ converges weakly to
the law $\mu$ of the maximal solution to \eqref{e:limitDiffusion} with $\sigma^2 = c$
where $c$ is given by \eqref{e:intKbar}.
\end{theorem}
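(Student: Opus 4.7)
The plan is to combine the convergence of rough paths from the previous proposition with continuity of the It\^o--Lyons solution map, and then to identify the limiting process with the Stratonovich SDE \eqref{e:limitDiffusion}.

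First, I would appeal to the Universal Limit Theorem of Lyons (see e.g.\ \cite{Terry,Book}): for $V_i$ sufficiently smooth and any $\alpha \in (1/5, 1/4)$, the solution map sending a rough path $\X \in \cC_g^\alpha$ to the (possibly exploding) solution of the RDE $dx = V_i(x)\, d\X^{(1),i}$ is continuous from $\cC_g^\alpha$ into $(\R^d)^\sol$. Since the previous proposition shows $\X_\eps \to \X$ in law in $\cC_g^\alpha$ for such $\alpha$, the continuous mapping theorem gives $x^\eps \to x$ in law in $(\R^d)^\sol$, with $x$ the RDE solution driven by the limiting rough path $\X$. The $(\R^d)^\sol$ topology introduced in \cite[Sec.~1.5.1]{YangMills} deals with explosion in exactly the same way as in the proof of Theorem~\ref{theo:main:KPZ}, so that blow-up of the limit forces $x^\eps$ to approach infinity consistently at the $\sol$ level.

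The core of the argument is to show that this $x$ coincides in law with the maximal solution of \eqref{e:limitDiffusion}. The key structural features of the limiting rough path are that $\X^{(1)} = \X^{(3)} = 0$, that $\X^{(2)}_{ij}(s,t) = \sigma \delta W_{ij}(s,t)$ is antisymmetric in $(i,j)$, and that $\X^{(4)}$ is an iterated Stratonovich integral against $W$. I would then set up the Davie-type local expansion
\begin{equ}
x_t - x_s = \sum_{i,j} (DV_j \cdot V_i)(x_s)\, \X^{(2)}_{ij}(s,t) + \sum_{i,j,k,\ell} T_{ijk\ell}(x_s)\, \X^{(4)}_{ijk\ell}(s,t) + r_{s,t}\;,
\end{equ}
with $|r_{s,t}| = o(|t-s|)$ and $T_{ijk\ell}$ suitable compositions of first and second derivatives of the $V_\bullet$. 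The antisymmetry of $\X^{(2)}$ in $(i,j)$ collapses the first sum into a combination of Lie brackets $[V_i,V_j]$ weighted against $\delta W_{ij}$ with $i<j$, while the $\X^{(4)}$ contribution supplies the Stratonovich--It\^o correction corresponding to the expansion of \eqref{e:limitDiffusion}. Matching both expansions to order $4\alpha > 1$ and invoking Davie's uniqueness theorem for RDEs then yields the identification.

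The main obstacle is this algebraic matching, in particular verifying that the precise coefficients appearing in front of $\X^{(2)}$ and $\X^{(4)}$ assemble into exactly the vector fields and Stratonovich correction of \eqref{e:limitDiffusion}. A conceptually cleaner alternative, which I would favor in writing a full proof, is to construct an explicit rough-path morphism from the canonical Stratonovich lift of the $\binom{m}{2}$-dimensional Brownian motion $(W_{ij})_{i<j}$, paired with the vector fields appearing in \eqref{e:limitDiffusion}, to the limiting rough path $\X$, paired with the original $V_1, \ldots, V_m$. Under such a morphism the two RDEs are tautologically equivalent, which would reduce the identification to a purely algebraic check at levels two and four.
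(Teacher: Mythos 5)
Your overall strategy is the same as the paper's: invoke the continuity of the Itô--Lyons map into $(\R^d)^\sol$ together with the convergence in law $\X_\eps \to \X$, then reduce the identification of the limit to a purely deterministic statement about the RDE driven by $\X$. The paper does exactly this, and its deterministic step is precisely the Davie-type coefficient matching you describe: using the antisymmetry of $\X^{(2)}$ to produce the Lie brackets $\f12[V_i,V_j]$ and the shuffle-induced antisymmetries of $\X^{(4)}$ to collapse the level-four terms into the level-two expansion of the auxiliary RDE $dx = \f12[V_i,V_j](x)\,d\B_{ij}$, whose coincidence with the Stratonovich SDE then follows from \cite[Thm~9.1]{Book}. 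Your ``conceptually cleaner alternative'' of constructing a rough-path correspondence between the Stratonovich lift of $(W_{ij})_{i<j}$ and the limit $\X$ is in fact how the paper organises the computation: starting from an arbitrary $2\alpha$-Hölder geometric rough path $\B$ satisfying the antisymmetry constraints \eqref{e:shuffle}, it defines $\X$ by \eqref{e:defX} and checks algebraically that the two RDE fixed-point problems agree, so your ``alternative'' and your main argument are the same argument.

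One small slip to fix: you write that one matches the expansions to order $4\alpha > 1$. With $\alpha \in (1/5,1/4)$ one has $4\alpha < 1$; the level-four rough path is precisely what makes the remainder of order $5\alpha$, and $5\alpha > 1$ is the relevant inequality. Also, when writing up the level-four matching you should be careful: several distinct compositions of derivatives of the $V_i$ multiply different permutations of $\X^{(4)}$, and while two of these vanish by the symmetry/antisymmetry mismatch you invoke, the remaining two only combine into $\f14 D[V_i,V_j]([V_k,V_\ell])\,\B^{(2)}_{k\ell ij}$ after a genuine calculation using both the antisymmetry relations \eqref{e:shuffle} and Chen's relation built into \eqref{e:defX}; this is not quite automatic and deserves to be spelled out.
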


\begin{proof}
By continuity of the solution map $\R^d \times \cC_g^\alpha \to (\R^d)^\sol$, it remains to show 
that the solution to the rough differential equation (RDE) driven by $\X$ coincides with that of the Stratonovich SDE
\eqref{e:limitDiffusion}. In view of \cite[Thm~9.1]{Book}, it suffices to show the following.
Assume that we are given an $\R^{m}\otimes \R^{m}$-valued $2\alpha$-Hölder continuous geometric rough path $(\B^{(1)},\B^{(2)})$
additionally satisfying the antisymmetry relations
\begin{equ}[e:shuffle]
\B^{(1)}_{ij} + \B^{(1)}_{ji} = 0\;,\quad 
\B^{(2)}_{ijk\ell} + \B^{(2)}_{jik\ell} =0\;,\quad  \B^{(2)}_{ijk\ell} + \B^{(2)}_{ij\ell k} = 0\;. 
\end{equ}
In our case, $\B$ is given by the Stratonovich lift of $W$, which indeed satisfies the
additional relations \eqref{e:shuffle} as a consequence of the fact that $W_{ji} = - W_{ij}$.

Given such a $\B$, we claim that the fourth order $\R^m$-valued rough path $\X$ given by
\begin{equ}[e:defX]
\X^{(1)} = 0\;,\quad 
\X^{(2)} = \B^{(1)}\;,\quad 
\X^{(3)} = 0\;,\quad 
\X^{(4)} = \B^{(2)}\;,
\end{equ}
then satisfies $\X \in \cC^\alpha_g$. Furthermore, for any collection of $m$ smooth functions 
$V_i \colon \R^d \to \R^d$, the solution to the RDE
\begin{equ}[e:RDE4]
dx(t) = V_i(x)\,d\X_i(t)\;,\qquad x(0) = x_0\;,
\end{equ}
coincides with the solution to the RDE
\begin{equ}[e:RDE2]
dx(t) = \f12 [V_i, V_j](x)\,d\B_{ij}(t)\;,\qquad x(0) = x_0\;.
\end{equ}
The claim of the theorem then follows at once from the fact that the RDE solutions 
to \eqref{e:RDE2} with $\B$ the Stratonovich lift of $W$ do coincide with the 
solutions to the Stratonovich SDE \eqref{e:limitDiffusion} by \cite[Thm~9.1]{Book}.

The proof that solutions to \eqref{e:RDE4} and \eqref{e:RDE2} coincide is folklore 
(see \cite[Sec.~3]{Sussmann} for a pre-rough path statement strongly hinting at it, 
but it certainly goes back even further since these kind of statements motivate the definition
of Lie brackets and form the basis of geometric control theory)
but this higher order version does not seem to appear in the literature, so 
we provide a sketch of proof.
The fact that $\X$ as defined by \eqref{e:defX} satisfies Chen's relations and the required analytic bounds 
is easy to verify for any $2\alpha$-Hölder rough path $\B$ so that, in order to show that $\X \in \cC^\alpha_g$, it remains to verify that the shuffle relations
(see for example \cite[Eq.~2.21]{Book}) hold. This however follows immediately from the shuffle relations for $\B$,
combined with the fact that we enforced \eqref{e:shuffle}.

After unpacking the definitions one finds that solutions to \eqref{e:RDE4} are 
in general given by the fixed point problem
\begin{equs}
x(t) &= x_0 + \lim_{|\CP_t| \to 0} \sum_{[u,v] \in \CP_t} \Big(V_i\X_i^{(1)} + DV_i (V_j)\X_{ji}^{(2)} + D^2V_i \big(V_j, V_k\big)\X_{kji}^{(3)}\\
 &\quad + DV_i\bigl(DV_j ( V_k)\bigr)\X_{kji}^{(3)} + D^3 V_i \big(V_j, V_k, V_\ell\big)\X_{\ell kji}^{(4)} \\
 &\quad + DV_i \bigl(DV_j(DV_k(V_\ell))\bigr)\X_{\ell kji}^{(4)}
 + DV_i \bigl(D^2V_j(V_k,V_\ell)\bigr)\X_{\ell kji}^{(4)}\\
&\quad + D^2V_i \bigl(V_j, D_k(V_\ell)\bigr)\bigl(\X_{j\ell ki}^{(4)}
+\X_{\ell jki}^{(4)}+\X_{\ell kji}^{(4)}\bigr)
 \Big) \;,
\end{equs}
where $\CP_t$ denotes a partition of $[0,t]$ and $|\CP_t|$ is the length of its largest subinterval.
The terms involving $\X_i^{(1)}$ and $\X_i^{(3)}$ vanish by \eqref{e:defX}. The term involving $\X^{(2)}$ can be 
rewritten by the first identity in \eqref{e:shuffle} as
\begin{equs}
\f12(DV_i(x_u) V_j(x_u))&\big(\B_{ji}^{(1)}(u,v) - \B_{ij}^{(1)}(u,v)\big) \\
&= \f12(DV_j(x_u) V_i(x_u) - DV_i(x_u) V_j(x_u))\B_{ij}^{(1)}(u,v)\\
&= \f12 [V_i,V_j](x_u)\B_{ij}^{(1)}(u,v)\;.
\end{equs}
Regarding the four terms involving $\X^{(4)}$, the first one vanishes since $\X_{\ell kji}^{(4)}$
in antisymmetric in $(k,\ell)$ by \eqref{e:shuffle} and \eqref{e:defX} while 
the prefactor is symmetric. The third one
vanishes for an analogous reason. Using again \eqref{e:shuffle} and \eqref{e:defX},
a straightforward calculation shows that the sum of the remaining two terms equals 
\begin{equ}
\f14 D[V_i,V_j] ([V_k,V_\ell])\,\B_{k\ell ij}^{(2)}\;,
\end{equ}
so that one indeed recovers the fixed point problem for \eqref{e:RDE2} as required.
\end{proof}

\begin{remark}
The convergence of course also holds at the level of flows.
\end{remark}

\bibliographystyle{Martin}
\bibliography{Variance}

\end{document}